\theoremstyle{plain}
\newtheorem{thm}{\protect\theoremname}[section]
  \theoremstyle{plain}
  \newtheorem{lem}[thm]{\protect\lemmaname}
  \theoremstyle{remark}
  \newtheorem*{rem*}{\protect\remarkname}
  \theoremstyle{definition}
  \newtheorem{defn}[thm]{\protect\definitionname}
  \theoremstyle{plain}
  \theoremstyle{plain}
  \newtheorem{cor}[thm]{\protect\corollaryname}
  \theoremstyle{remark}
  \theoremstyle{definition}
  \newtheorem*{example*}{\protect\examplename}
\newcommand{\half}{\frac{1}{2}}
  \providecommand{\claimname}{Claim}
  \providecommand{\corollaryname}{Corollary}
  \providecommand{\definitionname}{Definition}
  \providecommand{\examplename}{Example}
  \providecommand{\lemmaname}{Lemma}
  \providecommand{\propositionname}{Proposition}
  \providecommand{\remarkname}{Remark}
\providecommand{\theoremname}{Theorem}
\begin{document}

\title{
Impact Hamiltonian systems and polygonal billiards  }
\author{L. Becker, S. Elliott, B. Firester, S. Gonen Cohen, M. Pnueli, V. Rom-Kedar$^1$,\\ \normalsize  Department of Computer Science and Applied Mathematics, \\ \normalsize The Weizmann Institute of Science, Rehovot, Israel \\
 \normalsize $^1$The Estrin Family Chair of Computer Science and Applied Mathematics.}
\date{\today}
\maketitle

\LARGE
\textbf{Abstract}
\normalsize
The dynamics of a beam held on a horizontal frame by springs and bouncing off a step is described by a separable two degrees of freedom Hamiltonian system with impacts that respect, point wise, the separability symmetry. The energy in each degree of freedom is preserved, and the motion along each level set is conjugated, via action angle coordinates, to a geodesic flow on  a flat two-dimensional surface in the four dimensional phase space. Yet, for a range of energies, these surfaces are not the simple Liouville-Arnold tori - these are compact orientable  surfaces of genus two, thus the motion on them is  not conjugated to simple rotations. Namely,  even though energy is not transferred between the two degrees of freedom, the impact system is quasi-integrable and is not of the Liouville-Arnold type. In fact, for each level set in this range, the motion is conjugated to the well studied and highly non-trivial dynamics of directional motion in L-shaped billiards, where the billiard area and shape as well as the direction of motion vary continuously on iso-energetic level sets.   Return maps to Poincar\'e section of the flow are shown to be conjugated, on each level set, to interval exchange maps which are computed, up to quadratures, in the general nonlinear case and explicitly for the case of two linear oscillators bouncing off a step. It is established that for any such oscillator-step system  there exist step locations for which some of the level sets exhibit motion which is neither periodic nor ergodic. Changing the impact surface by introducing additional steps, staircases, strips and blocks from which the particle is reflected, leads to iso-energy surfaces that are foliated by families of  genus \(k \) level set surfaces, where the number and order of families of genus \(k\) depend on the energy.
\newpage

\section{Introduction}
\indent

Quasi-integrable dynamics appear in non-convex billiards with boundary consisting of horizontal and vertical segments  \cite{Zorich2002,Zorich2006,Athreya2012} and in non-convex billiards created by segments belonging to confocal quadrics  \cite{Dragovic2014,Dragovic2015,Fraczek2018}. The resulting dynamics are related to deep mathematical theories  on Interval Exchange Maps (IEM), on directed motion on translation surfaces, on genericity of curves in the space of affine lattices, on the Teichm\"uller geometry of moduli space  and even on some results in Number theory
 (see \cite{DeMarco2011,Fraczek2018,Veech1982} and references therein).
We show that this fascinating collection of inter-related mathematical fields  are also related to the  rich research area of  Hamiltonian Impact Systems  (HIS). Thus, these theories are related  to a large variety of physically realizable models. We present this connection in the simplest possible setting and in the discussion we comment on some future synergetic directions.

In  \cite{Zorich2002,Dragovic2014} the  two known integrable billiards in the plane, rectangles and ellipses, are modified by considering  non-convex boundaries consisting of segments that respect the symmetries of the integrable billiard dynamics. The resulting tables, \textit{nibbled rectangles} \cite{Athreya2012}- domains defined by segments of horizontal and vertical boundaries  (the simplest non-trivial geometries are     slitted rectangle and L-shaped billiards, see, e.g. Fig \ref{fig:lshape})  and \textit{nibbled ellipses } \cite{Dragovic2014,Dragovic2015,Fraczek2018,frkaczek2019recurrence} - domains defined by segments of confocal quadrics,  display fascinating dynamical properties.  The nibbled rectangles are rational polygons and are thus analyzed by   constructing, by reflections along the horizontal and vertical segments, a flat surface (possibly with singularities). Then, the directional billiard flow on the nibbled rectangle is conjugated to the geodesic flow on the glued flat surface.  The genus of the flat surface is computable depending only on the number and type of corners (see \cite{Athreya2012} and references therein).  The return map to a transverse section of the surface is an IEM, and thus, the dynamics on the flat surface and the properties of the IEM are related. The dynamics on a given surface  depend on the direction of motion.
For a flat torus, the dynamics satisfy the Veech dichotomy: depending on the direction, either the motion is periodic or uniquely ergodic. The higher genus surfaces that are produced by the nibbled rectangles do not necessarily satisfy this condition; For the tables that do not produce lattice surfaces, there can be  directions of motion for which the dynamics are uniquely ergodic,  directions of motion such that a band of periodic trajectories co-exists with a band of trajectories that are dense on some set in the associated flat surface, and there can be also directions which are ergodic but not uniquely ergodic. Characterizing the measure of these sets of directions for a given billiard, the measure of parameters on which such behavior occurs for a given family of billiards, and defining proper statistical properties of the dynamics for such directions are delicate problems which are under current study, see e.g. \cite{DeMarco2011,Athreya2012,Sturman2012,Kim2017,Fraczek2018,frkaczek2019recurrence} and references therein.

In \cite{Dragovic2014,Dragovic2015,Fraczek2018,frkaczek2019recurrence} it was discovered that  the above tools may be applied to the study of the dynamics  in nibbled ellipses. Since reflections from confocal quadrics preserve the same integral of motion, for any fixed integral of motion a conjugacy to a directional motion on a glued flat surface is found, and, thus, an IEM can be constructed. Notably, each constant of motion (namely, each caustic) in a nibbled elliptic table defines a directional flow on a different flat surface while the direction of motion is fixed \cite{Fraczek2018}.  Recently it was established that under some conditions on the nibbled ellipse the family of directional flow on the resulting surfaces corresponds to a generic curve in the corresponding moduli space  \cite{Fraczek2018,frkaczek2019recurrence}.
We show here that  a rich class of HIS produces families of directional flow on flat surfaces, where both the direction and the geometry of the surfaces vary piecewise smoothly. While the question of conditions for genericity of the flow on iso-energetic surfaces remains open, the tools developed in \cite{Fraczek2018,frkaczek2019recurrence} appear relevant.

The field of Hamiltonian Impact Systems  (HIS), which corresponds to a smooth conservative motion in a domain \(D\) with elastic impacts from its boundaries, combines two types of dynamical systems - the non-trivial, possibly chaotic, smooth motion associated with Hamiltonian flows \cite{Arnold2007CelestialMechanics}, and, the dynamics resulting from elastic impacts, which have been extensively studied mainly in the context of billiards  \cite{kozlov1991billiards}. The combination of these two fields is  natural from a modeling point of view, as, in many systems, there is a smooth bounded interaction component (e.g. attraction between atoms) and short range repulsion (e.g. atomic repulsion between atoms) giving rise to steep potentials that may be approximated, as a singular perturbation, by elastic reflections  \cite{kozlov1991billiards,LRK12,KRK14,pnueli2018near}.  Analysis of non-integrable HIS includes local analysis near   periodic orbits of the HIS \cite{dullin1998linear,KRK14,Cheng1991}, analysis near homoclinic orbits of the HIS \cite{LRK12}, studies of the impact dynamics in some adiabatic limits  \cite{neishtadt2008jump,gorelyshev2006adiabatic},  persistence of KAM tori of motion along   convex boundaries \cite{zharnitsky2000invariant}, and even establishment of hyperbolic behavior for some specific type of systems of particles \cite{Wojtkowski1999,Woj98}.

 A class of HIS systems which is amenable to analysis under various perturbation is the Liouville Integrable Hamiltonian Impact Systems (LIHIS) - these are integrable Hamiltonian systems with impact surfaces which respect the integrability symmetries and for which the motion on almost all level sets is rotational:

\begin{defn}\label{def:liouvilimpint}
An HIS with compact level sets defined on a domain \(D\) belonging to a smooth manifold with piecewise smooth boundary is a Liouville-Integrable HIS (LIHIS) if:
\begin{description}
\item[RespF]  All the integrals of motion of the smooth Liouville-integrable Hamiltonian flow are preserved under impacts.

\item[Resp\(\theta\)]  The motion  on any connected component of a regular level set, namely, on components in the allowed region of motion on which the differentials of the constants of motion are independent, is conjugated to a directed motion on a torus.  \end{description}
\end{defn}
 For smooth systems (hereafter, by smooth we mean \(C^\infty\)-functions as in the Arnold-Liouville theorem, though, all results are  probably correct also for \(C^{r},r\geqslant2\) potentials) the Resp\(\theta\) condition follows from the RespF condition by the Arnold-Liouville theorem \cite{Arnold2013mathematicalmethods}, and in some works HIS satisfying the RespF conditions are called integrable \cite{Woj98}.  Examples for LIHIS  are mechanical separable\footnote{ Separable systems means hereafter  decoupled systems -  product systems of N 1 d.o.f mechanical Hamiltonians. The  more general class of separable systems defined in  \cite{marshall1988hamiltonian}  is not analyzed here.} Hamiltonians (i.e. \(H(q,p)=\sum_{i=1}^N(\frac{p_{i}^{2}}{2m}+V_{i}(q_i))\)) with compact level sets undergoing impacts from a perpendicular wall - an impact surface which is an infinite \(N-1\)-dimensional plane in the configuration space with a normal aligned along one of the \(q_{i}\) axes (see \cite{pnueli2018near,PRk2020} for the \(N=2\) case). Then, elastic impacts  with the wall send \(p_{i}\rightarrow-p_i\), and both the RespF and Resp\(\theta\) conditions are satisfied on regular level sets (as, in the \((q_{i},p_i)\) plane one can define action angle coordinates \cite{neishtadt2008jump} and all other d.o.f. are not affected, see  \cite{PRk2020}). As argued in  \cite{PRk2020}, it is expected  that separable systems with impact surfaces that consist of several such perpendicular walls are also LIHIS (e.g. a billiard in a rectangular box). This  class of LIHIS  enriches the number of integrable impact systems from merely two families for billiards (ellipsoidal billiards  \cite{Dragovic2011} and rectangular boxes) to the  huge class of all integrable separable Hamiltonian systems with perpendicular walls (and possibly to other integrable Hamiltonian systems with properly defined impact surfaces). Moreover in  \cite{pnueli2018near,PRk2020} it is establish that under some non-degeneracy assumptions on \(V_{i}\), perturbations of such 2 d.o.f. LIHIS by small smooth coupling terms and/or small smooth deformation of the walls are amenable to perturbation analysis (in particular to impact-KAM theory  \cite{pnueli2018near,PRk2020}).
The extension of these ideas to HIS with quadratic potentials in elliptic billiards \cite{Fedorov2001,Radnovic2015} is yet to be developed.

 Now  consider separable Hamiltonians  impacting from surfaces in the configuration space that are composed of several finite or semi-infinite planar plates, all of which are perpendicular to one of the \(q_{i}\)-axes. Then, elastic impacts  are of the same form, \(p_{i}\rightarrow-p_i\), so the RespF condition is still satisfied.  The HIS flow resides  on the intersection of the level set \(\{(q,p)|H_{i}(q_i,p_i)=e_i,  i=1,..,N\} \) with the billiard phase-space domain (i.e. with \((q,p)\in D\times\mathbb{R}^{n}\)), and the boundary created by the  impact surfaces is glued according to the elastic impact rule. For regular level sets of the smooth system these glued level sets are   \(N\) dimensional surfaces. Nonetheless, as shown next, in some cases the Resp\(\theta\) condition is not satisfied. We call such systems  Quasi-Integrable HIS (QIHIS), as we show that their dynamics is conjugated, on some of the level sets, to the directional motion on  Quasi-Integrable billiards.

To simplify the presentation we consider one of the simplest possible QIHIS:  two uncoupled oscillators that impact from a single right step in the configuration space, see Figure \ref{fig:models} for a physical realization of such a system; A   springy beam is held on a horizontal frame and reflects from a step. The springs are  connected to slider blocks that slide with no friction along a rectangular frame of ducts, and  the step-walls, marked by a dotted line are  out of the frame plane so that the slider blocks slide freely under the step walls and do not collide with them. The beam hits the step walls and bounces off them (always parallel to the out-of plane axis). The springs are rigid to bending (can extend only in one direction) and are uncoupled. Thus, as the beam bounces off the step-walls elastically, it retains its vertical and horizontal energies  \((e_{1},e_2)\).
Without the step, the system is a classical integrable system - all orbits belonging to a given level set \((e_{1},e_2)\) satisfy the Veech dichotomy: either all orbits are dense and cover the torus of angle variables uniformly (equi-distributed) or all orbits on this torus are closed. This behavior also implies that the return map to a transverse section of this torus is a rotation, and the rotation number on the prescribed level set determines which of the two options occurs. We show that this basic property is changed in the step-system. In particular, we identify a range of  iso-energy level set surfaces that are of genus two  and thus the return maps to a transverse section on such surfaces is, generally, a 5-IEM. This implies, for example, that an observable which depends on the oscillators phases (e.g. observable depending on the location of the beam) can have very delicate statistical properties \cite{Sturman2012,Kim2017}.

\begin{figure}
\begin{centering}
\includegraphics[scale=0.35]{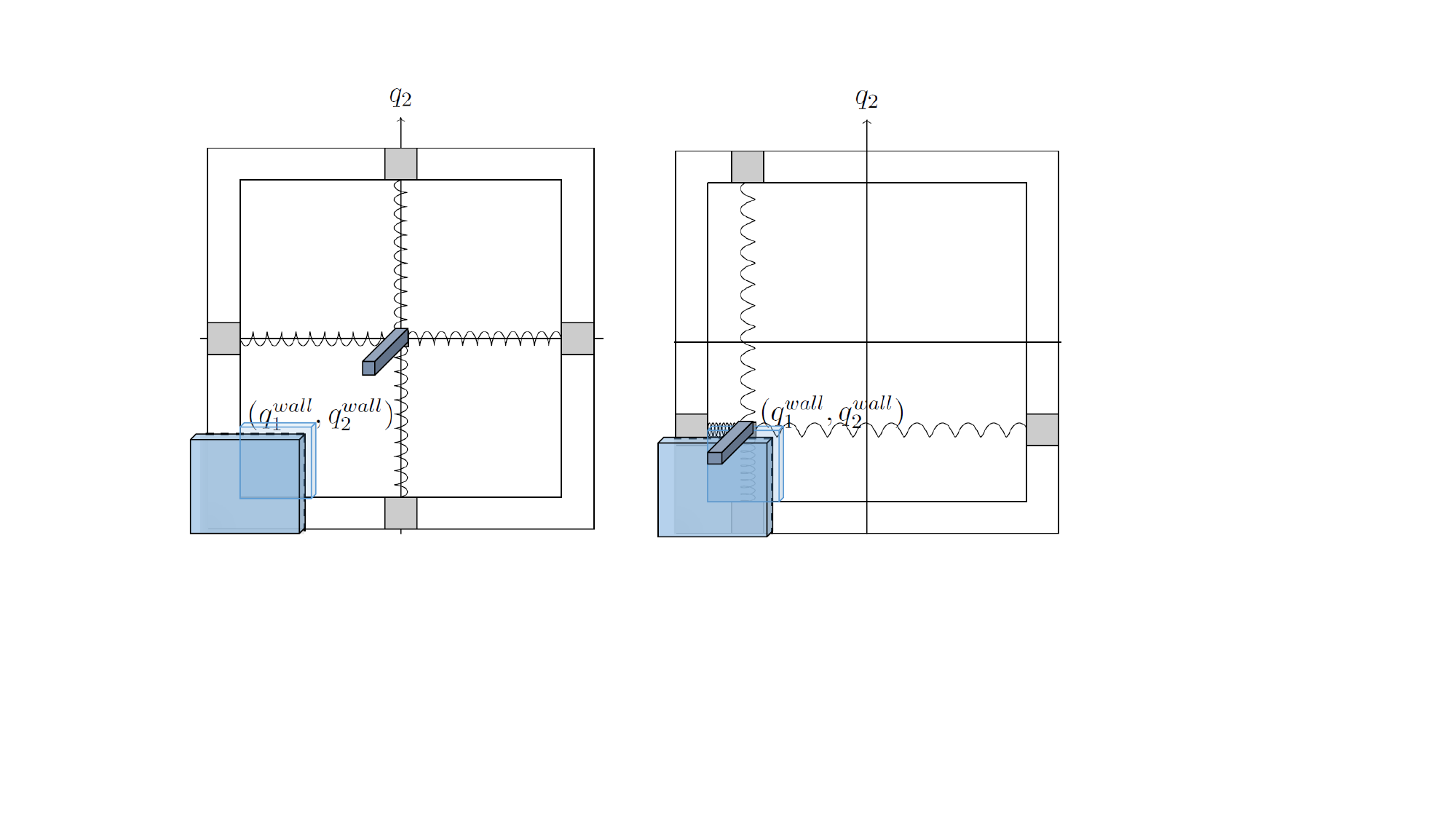}
\par\end{centering}
\protect\caption{A mechanical model for the Hamiltonian-step system  (Eqs. (\ref{eq:modelsham}) and (\ref{eq:stepdef})). (a) A rigid beam is confined between rigid horizontal and vertical springs hinged to supports that slide with no friction along a frame. (b) Two aligned rigid steps  are placed in front and on the back of  the frame, so when the beam hits these barriers an elastic impact occurs\label{fig:models}.}
\end{figure}

The paper is ordered as follows:
in section \ref{sec:main} we define the  step-system and state the main results: Theorem \ref{thm:mainflow} which conjugates the step dynamics to the quasi-integrable dynamics of directed motion on a compact orientable  surfaces of genus two and to  L-shaped billiards, Corollary  \ref{cor:nontrivialtop} which concludes that the energy surface has non-trivial foliation, Corollary \ref{cor:moresteps} which concludes that including additional steps, staircases, strips and rectangular scatterers can be similarly analyzed, Theorem \ref{thm:mainIEMsh} which concludes that  Poincar\`e return maps of the Hamiltonian impact step flow are conjugated, on each level set, to an IEM and Theorem \ref{thm:mainflowlin} which summarises the results for the case of linear oscillators with impacts from a step. In section \ref{sec:proofmain} we prove our main results - that the motion in this system, on each level set, is conjugated to either a billiard flow, in a specific direction, on a rectangular domain, or to a billiard flow, in a specific direction, on an L-shaped billiard.  Moreover, we prove that beyond a prescribed energy, the shape of the billiard on iso-energy surfaces changes from rectangular to continuously varying L-shaped billiards, back to rectangular domain, namely, that the topology of the invariant level set surfaces changes on iso-energy surfaces from genus one surfaces to genus two surfaces and back to genus one surfaces. In section \ref{sec:returnmaps}  we define and compute (up to quadratures) the corresponding Poincarè return maps for the level-set dynamics (Theorems  \ref{thm:mainmapR12} and \ref{thm:mainmap}), thus proving Theorem \ref{thm:mainIEMsh}. Section \ref{sec:moreproperties}  is devoted to establishing some specific properties of the resulting IEM, in particular, showing that typically there are many isolated level sets at which one of the intervals lengths vanishes. Section \ref{sec:linearosc}  applies these main results to linear oscillators, where the IEM are explicitly found, thus proving Theorem \ref{thm:mainflowlin}.   We end with a discussion in which we list several natural extensions of this work  and some open problems.

\section{\label{sec:main} The  step-system - setup and main results}
Consider an autonomous smooth  separable Hamiltonian corresponding to a particle motion  in \(\mathbb{R}^{2}\):
\begin{equation}
 H=H_1(q_1,p_1) + H_2(q_2,p_2)=\dfrac{p_1^2}{2m}+V_{1}(q_1)+\dfrac{p_2^2}{2m}+V_{2}(q_2) \label{eq:modelsham}\end{equation}where we assume for simplicity of presentation that the potentials \(V_{i}(q)\) have a single simple minimum  and are concave - they monotonically increasing to infinity as \(|q-q_{i,min}|\) increases (other interesting cases will be studies elsewhere). With no loss of generality,  take the particle mass to be \(m=1\) and the potentials minima to be at \(q_{i,min}=0  \) with \(V_{i}(0)=0\).   For positive i-energies \(e_{i}=H_i(q_i(t),p_i(t))>0, i=1,2\),  the particle oscillates with frequencies \((\omega_{1}(e_1),\omega_{2}(e_2))\) in the box    \([q_1^{min}(e_1),q_1^{max}(e_1)]\times[q_2^{min}(e_2), q_2^{max}(e_2)] \) of  the configuration space, where   \(q_i^{min}(e_i)\) and \(q_i^{max}(e_i)\) denote the minimal and maximal value of \(q_i(t)\) on the level set \(e_{i}\) (so \(V_{i}(q_i^{min}(e_i))=V_{i}(q_i^{max}(e_i))=e_{i}\)). Since  \(q_{i,min}=0  \), for all positive \(e_{i}\),  \(q_i^{min}(e_i)<0<q_i^{max}(e_i) \) and the level sets are nested \(\frac{d}{de_{i}}q_i^{max}(e_i)>0,\frac{d}{de_{i}}q_i^{min}(e_i)<0\)). Denote by \((\theta _{i}(t)=\omega_{i}(e_i)t+\theta _{i}(0),I_{i}(e_i))\) the action-angle coordinates of the 1 d.o.f. Hamiltonian  \(H_i(q_i,p_i)=H_i(I_i)\) (for one d.o.f. systems with concave potential these always exist and are unique up to a shift in the angle coordinate  \cite{Arnold2007CelestialMechanics}).
 A mechanical example of such a system is the beam held in a frame between two sets of  uncoupled springs hinged on  sliding, frictionless blocks (see Figure \ref{fig:models}). The simplest case to consider is of Linear Oscillators (LO), namely, the case of quadratic  potentials:\begin{equation}
V_{i}^{LO}(q_i)=\half \omega^{2}_{i}q_i^2, \quad i=1,2.\label{eq:LOpot}
\end{equation}We formulate below the main results (Theorems \ref{thm:mainflow}, Corollaries  \ref{cor:nontrivialtop} and \ref{cor:moresteps} and Theorem \ref{thm:mainIEMsh} ) for non-linear oscillators and dedicate Theorem \ref{thm:mainflowlin} and section \ref{sec:linearosc} to the LO case.

  Now, introduce a step  \(S\) in the configuration space (see Fig. \ref{fig:models}):\begin{equation}
S=\{(q_{1},q_{2})| \:q_1 < q_1^{wall} \text{ and }  q_2 <\label{eq:stepdef} q_2^{wall}\}, \qquad  q_1^{wall} \cdot q_2^{wall}\neq0,
\end{equation}
 and assume the particle bounces off elastically from this step (we require, to avoid degeneracies, that the step  is located away from the two axes); At the right wall of the step (hereafter, the 1-boundary), where \(q_{1}=q_1^{wall} \text{ and } q_2<q_2^{wall}\),
 the horizontal momentum is switched $
(q_1, q_2, p_1, p_2)\rightarrow\ (q_1, q_2, -p_1, p_2) $  whereas at the step upper wall (the 2-boundary), where \(q_{1}<q_1^{wall} \text{ and } q_2=q_2^{wall}\), the vertical  momentum changes sign  $
(q_1, q_2, p_1, p_2) \rightarrow\ (q_1, q_2, p_1,- p_2)$.
 When the particle hits the corner of the step the system is not defined and the trajectory stops. The flow is discontinuous at impacts,  is smooth elsewhere, and the vertical and horizontal energies, \(e_{i}\), are conserved by the impacts.
We
call this HIS the step system.
Denote the step energies by
\begin{equation}
h^{step}_{i}=V_i(q_i^{wall}), \qquad h^{step}=\ h^{step}_1+h^{step}_2,
\label{eq:histepdef}\end{equation}the step-family of level sets by:\begin{equation}
\mathcal{R}^{c}(h)=\{(e_{1},e_{2})|e_{1}\in(h^{step}_1,h-h^{step}_2),  \quad\ e_{2}=h-e_{1}\},\qquad\text{defined for } h>h^{step},
\label{eq:defsetRc}\end{equation}  \(\)by \(T_{i}(e_{i})=\frac{2\pi }{\omega_{i}(e_i)}\) the period of the smooth oscillators, by \begin{equation}\label{eq:Theta1smooth}
\Theta_{2}^{smooth}=\Theta_{2}^{smooth}(e_{1},h)=2\pi\frac{  T_{1}(e_1)}{T_{ 2}(h-e_{1})}
\end{equation} the rotation number of \(\theta_{2}\) on the level set \((e_{1},e_2=h-e_1)\), and by  \( \tilde T_{i}(e_{i};q_i^{wall})\) the period of the impact system when it is reflected from a wall at \(q_i=q_i^{wall}\) (namely,  \( \tilde T_{i}(e_{i};q_i^{wall})=
 2\int_{q_{i}^{wall}}^{q_i^{max}(e_i)} \frac{dq}{\sqrt{2(e_{i}-V_i(q_i))}} \)). Finally,  let\begin{equation}\label{eq:thetaiwalldef}
\theta_i^{wall}
(e_{i};q_{i}^{wall})=\pi\frac{\tilde T_{i}(e_{i};q_i^{wall})}{T_{i}(e_{i})}.
\end{equation}
  We will show later that by proper setting of the angle coordinate of the \(i\)th oscillator, \(\theta_i^{wall}
(e_{i};q_{i}^{wall})\) is the angle variable phase at the wall (see Lemma \ref{lem:reflectionsangles}).
 Depending on the properties of \(V_{i}\) and on the sign of \(q_{i}^{wall}\), the functions \(\theta_i^{wall}
(e_{i};q_{i}^{wall})\) may be monotone or not in \(e_{i}\) (for the LO case they are monotone, see below).
\begin{defn} \textit{The step system} is the two d.o.f.  HIS  defined by the smooth Hamiltonian of the mechanical form (\ref{eq:modelsham}) defined on \((q_{1},q_2)\in\mathbb{R}^{2}\setminus S\),\ with elastic reflections at the step \(S\) (Eq. (\ref{eq:stepdef})) boundaries. Each of the potentials \(V_{i}(q_i)\) in (\ref{eq:modelsham})  is smooth, has a single minimum at the origin and is concave:  \(qV_{i}'(q)>0\) for all \(q\neq0\).
\end{defn}

Our main results are  (see Fig \ref{fig:lshape}):
\begin{figure}
\begin{centering}
\includegraphics[scale=0.35]{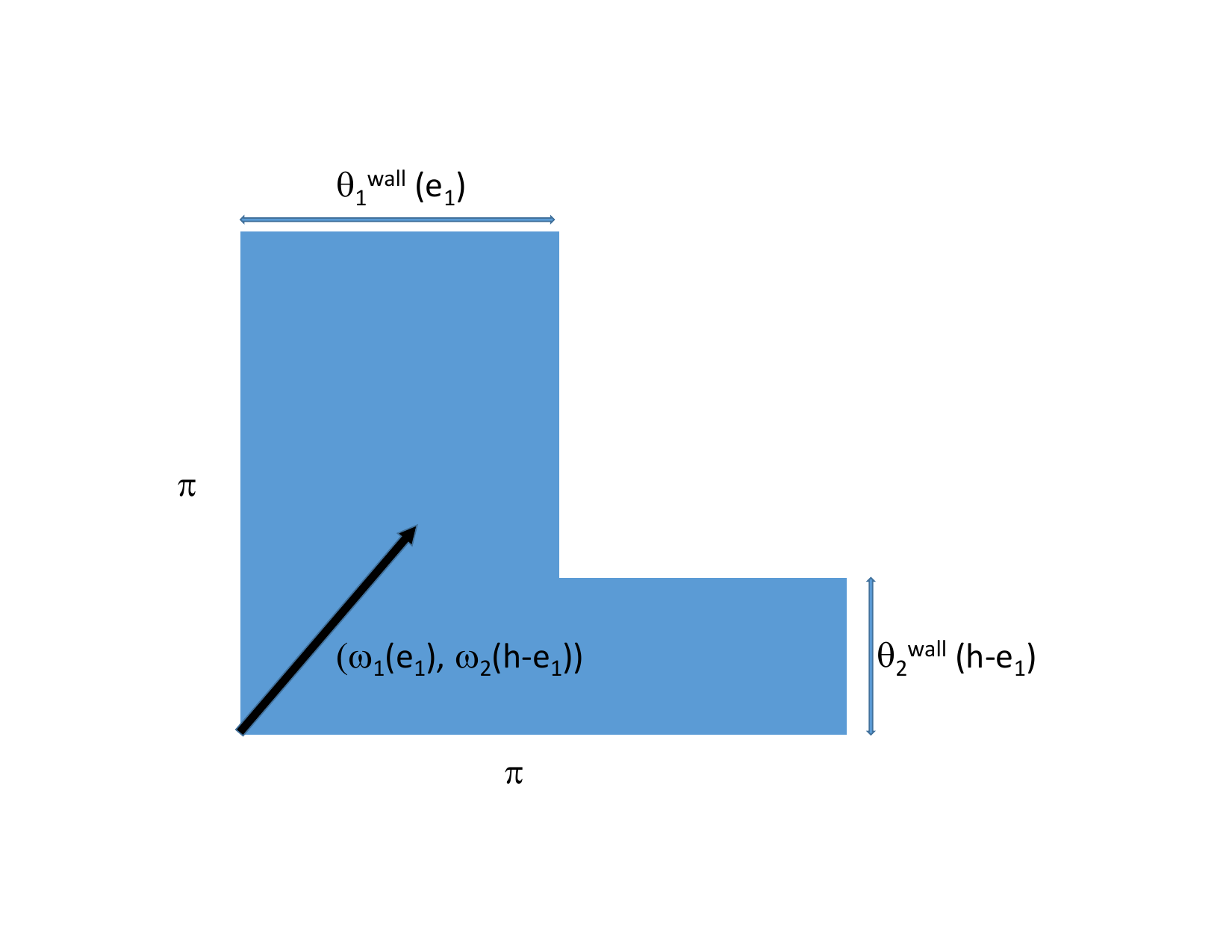}
\par\end{centering}
\protect\caption{The directional flow on the L shaped billiard \label{fig:lshape} \(L(\pi,\pi,\theta_{1}^{wall}(e_{1};q_{1}^{wall}),\theta_{2}^{wall}(h-e_{1};q_{2}^{wall}))\).}
\end{figure}

\begin{thm}\label{thm:mainflow}  The step system is  not Liouville-integrable HIS; For all \(h>h^{step}\),\ the flow on level sets belonging to the step family, \(  \mathcal{R}^{c}(h)\), is topologically conjugate to the  \((\omega_1(e_1),\omega_2(h-e_{1}))\)- directional billiard  flow on the   L shaped billiard  \(L(\pi,\pi,\theta_{1}^{wall}(e_{1};q_{1}^{wall}),\theta_{2}^{wall}(h-e_{1};q_{2}^{wall}))\) whereas the step flow for  the iso-energy level sets belonging to the complement of  \(\mathcal{R}^{c}(h)\)  is topologically conjugate to a directional billiard  flow on   a rectangular billiard. For all  \(h>h^{step}\) both families have positive measure.   \end{thm}

\begin{cor}\label{cor:nontrivialtop}For all \(h>h^{step}\) the foliation of the iso-energy surface
to level sets with increasing \(e_{1}\) value has two singularities at which the level sets topology changes: at   \(e_1= h_{1}^{step}\) the topology changes from a genus one surface to a genus two surface whereas at    \(e_1=h- h_{2}^{step}\) the topology changes back to a genus one surface. \end{cor}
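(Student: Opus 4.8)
The plan is to read the corollary off Theorem~\ref{thm:mainflow} once the genus of each of the two billiard surfaces is pinned down, so that the whole statement reduces to a pair of Euler-characteristic computations plus a continuity argument at the two endpoints. By the theorem, for $e_1$ in the open interval $\mathcal R^c(h)=(h_1^{step},h-h_2^{step})$ the level-set flow is topologically conjugate to a directional billiard flow on the L-shaped table $L(\pi,\pi,\theta_1^{wall},\theta_2^{wall})$, whereas for $e_1$ in the complement it is conjugate to a directional flow on a rectangle. Since a topological conjugacy of the flows carries a homeomorphism of the underlying invariant surfaces, the genus of a level set equals the genus of the translation surface obtained by unfolding the corresponding billiard; it therefore suffices to show that the rectangle unfolds to a torus and the L-shape to a genus-two surface.

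Both tables have only right and reflex-right angles, so I would unfold each by the Klein four-group generated by the reflections in the horizontal and vertical directions, gluing the four reflected copies along parallel edges to form a translation surface. The genus then follows from Gauss--Bonnet in the form $\sum_j (c_j-1)=2g-2$, where the sum runs over the cone points and $2\pi c_j$ is the cone angle at the $j$-th point. For the rectangle each of the four corners has angle $\tfrac{\pi}{2}$, its four copies glue to a single point of total angle $4\cdot\tfrac{\pi}{2}=2\pi$ (so $c_j=1$), every cone point is regular, and $2g-2=0$ gives $g=1$. For the L-shape the five convex corners are again regular, but the four copies of the single reflex corner glue to one point of cone angle $4\cdot\tfrac{3\pi}{2}=6\pi$, i.e.\ $c=3$; hence $\sum_j(c_j-1)=2$ and $g=2$. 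The hypothesis that the step sits off the two axes guarantees the corners avoid the reflection axes, so the four copies are genuinely distinct and glue as claimed.

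With the two genera in hand the corollary is immediate: the genus of the level set equals $1$ for $e_1\notin\mathcal R^c(h)$ and $2$ for $e_1\in(h_1^{step},h-h_2^{step})$, so as $e_1$ increases it jumps from one to two exactly at $e_1=h_1^{step}$ and drops back from two to one exactly at $e_1=h-h_2^{step}$, these being the only two values at which the topology of the foliation changes.

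The step I expect to be the main obstacle --- and the one I would treat most carefully --- is to confirm that these two endpoints are genuine topological \emph{singularities} of the foliation rather than artefacts of the conjugacy breaking down. Concretely, one must check that as $e_1\to (h_1^{step})^+$ the wall at $q_1^{wall}$ approaches the turning point $q_1^{max}(e_1)$, so that $\tilde T_1\to0$ and hence $\theta_1^{wall}\to0$, collapsing the notch of the L-shape in the first direction and degenerating it continuously into the rectangle (and symmetrically at $e_1=h-h_2^{step}$, where $\theta_2^{wall}\to0$). This reconciles the $g=2$ interior with the $g=1$ boundary, identifies the collapsing corner as the reflex vertex whose cone point was responsible for the extra genus, and shows that the reflex corner --- equivalently, the step corner at which the flow is undefined --- is precisely the seat of the topological transition.
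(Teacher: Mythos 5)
Your main line of argument is sound and is essentially the paper's, read in the opposite direction: the paper builds the genus-two object directly as the ``swiss-cross'' translation surface in the angle variables (a flat torus with a rectangular hole whose parallel sides are glued, Lemma \ref{lem:lshaped}) and obtains the L-shaped billiard by folding, whereas you start from the L-shaped billiard of Theorem \ref{thm:mainflow} and recover that same surface by unfolding with the Klein four-group. Your explicit cone-angle/Gauss--Bonnet computation (the single $\tfrac{3\pi}{2}$ corner unfolding to a $6\pi$ cone point, hence $2g-2=2$) is a welcome way of making precise what the paper disposes of by citation to the flat-surfaces literature; the paper's route has the small advantage that the level set in phase space \emph{is} the swiss cross, so no separate identification of the billiard's invariant surface with the level set is needed. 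Since genus one on the complement of $\mathcal{R}^{c}(h)$ and genus two on its interior already force the topology to change exactly at $e_1=h_1^{step}$ and $e_1=h-h_2^{step}$, your proof of the corollary is complete at that point.

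One factual error in your final paragraph, however, should be corrected: it is not true that $\theta_1^{wall}\to 0$ as $e_1\searrow h_1^{step}$ in general. By Eq.\ (\ref{eq:thetailimits}) and Table \ref{tab:thetat12}, this limit is $0$ only when $q_1^{wall}>0$ (the wall then coincides with the right turning point $q_1^{max}(h_1^{step})$, so $\tilde T_1\to 0$); when $q_1^{wall}<0$ the wall is the \emph{left} turning point $q_1^{min}(h_1^{step})$, so $\tilde T_1\to T_1$ and $\theta_1^{wall}\to\pi$. In that case the degeneration at the endpoint is of the opposite kind: the removed notch closes up and the L-shape tends to the full square rather than to a thin rectangle. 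Either degeneration yields a genus-one limit, so the corollary is unaffected, but the mechanism you describe as ``the seat of the topological transition'' holds only for one sign of $q_1^{wall}$; the symmetric statement at $e_1=h-h_2^{step}$ likewise depends on the sign of $q_2^{wall}$.
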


\begin{cor}\label{cor:moresteps}By adding more steps, staircases, strips and rectangular barriers it is possible to create impact systems with level sets with any given genus \(\geq 1\) and any number of disconnected components. The corresponding iso-energy surfaces are foliated by a finite number of  families of  level sets with equi-genus and equi-number of components.  \end{cor}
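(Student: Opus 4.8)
The plan is to reduce the statement to a topological computation on unfolded flat surfaces, by generalizing the conjugacy of Theorem~\ref{thm:mainflow} from a single step to an arbitrary admissible reflecting set $R$ --- a finite union of steps, staircases, strips and rectangular barriers whose sides are parallel to the $q_1,q_2$ axes and which is placed generically (no corner on an axis, no two walls at the same energy). Passing to action--angle coordinates in each degree of freedom sends the configuration box to the angle torus by a map that is two-to-one in each variable; exactly as in the reduction used for Theorem~\ref{thm:mainflow}, each vertical wall $q_1=q_1^{wall}$ becomes a vertical segment at $\theta_1=\theta_1^{wall}(e_1;q_1^{wall})$ and each horizontal wall a horizontal segment. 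Hence on every level set $(e_1,e_2=h-e_1)$ the flow is conjugate to the $(\omega_1,\omega_2)$-directional billiard on a rectilinear polygon $P(e_1,h)$ obtained from the fundamental angle domain by deleting the images of exactly those walls the trajectory reaches, i.e.\ those with $e_i>V_i(q_i^{wall})$.

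Second, I would read off the topology of the level set from $P(e_1,h)$ by the standard unfolding. Since every interior angle of $P$ is $\pi/2$ or $3\pi/2$, reflection in horizontal and vertical lines generates the Klein four-group and glues four copies of $P$ into a flat (translation) surface $X$, which is precisely the invariant level-set surface (four copies of the $\pi\times\pi$ table, recovering the $2\pi\times2\pi$ torus in the wall-free case). Every special point comes from a corner of $P$: a convex ($\pi/2$) corner glues four copies into cone angle $2\pi$ (a regular point), while a reflex ($3\pi/2$) corner glues into cone angle $6\pi$. Gauss--Bonnet for flat surfaces, $\sum_i(2\pi-c_i)=2\pi(2-2g)$, then reads $-4\pi R=2\pi(2-2g)$, i.e.
\begin{equation}
g = 1 + R, \label{eq:genusR}
\end{equation}
where $R$ is the number of active reflex corners, while the number of connected components of $X$ is dictated by how the active walls separate the accessible region. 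This reproduces $g=1$ for the rectangle and $g=2$ for the L-shape of Theorem~\ref{thm:mainflow}.

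Third, I would realize prescribed invariants by explicit constructions. A descending staircase carrying $k-1$ inner (reflex) corners, placed and energized so that all of them are active, gives $R=k-1$ and hence, by \eqref{eq:genusR}, a connected genus-$k$ level set; the case $k=1$ is the rectangular/Liouville one. To produce $n$ components one inserts $n-1$ full-height (or full-width) strips that cut the accessible box into $n$ sub-boxes, each carrying its own invariant surface; combining strips with staircases inside each sub-box realizes any prescribed pair (genus, number of components). For the foliation statement, fix $h$ and increase $e_1$ through $(0,h)$: a wall at $q_i^{wall}$ switches from inactive to active precisely when $e_i=V_i(q_i^{wall})$, that is, at the finitely many step energies of the walls. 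Between consecutive thresholds the set of active walls, and therefore the combinatorial type of $P(e_1,h)$, is constant --- the angles $\theta_i^{wall}$ vary continuously but no corner is created or destroyed --- so $R$ and the component count, hence $g$ and the number of components through \eqref{eq:genusR}, are constant on each subinterval. The thresholds partition $(0,h)$ into finitely many intervals, yielding finitely many families of equi-genus, equi-component level sets, exactly in the spirit of Corollary~\ref{cor:nontrivialtop}.

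The main obstacle I expect is the first step: making the generalized conjugacy rigorous for reflecting sets richer than a single step. One must determine, as a function of $(e_1,h)$, which walls the trajectory actually reaches and how their angle-images assemble into a single polygon $P(e_1,h)$ that may be non-simply-connected or even disconnecting; handle interior rectangular barriers, which create holes, so that the cone-angle bookkeeping behind \eqref{eq:genusR} still accounts for every corner; and treat the measure-zero set of orbits that strike a corner, where the flow is undefined. The disconnectedness claim in particular requires checking that a full-height strip lifts to a genuine separation of the angle torus rather than a mere slit. Once $P(e_1,h)$ and the action of the reflection group are correctly identified, the genus and component counts follow from \eqref{eq:genusR} and Gauss--Bonnet as above.
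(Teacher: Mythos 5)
Your proposal is correct and follows the same overall strategy as the paper's (very brief) justification: pass to action--angle variables so that each horizontal or vertical wall becomes, on a given level set, a horizontal or vertical gluing segment in the angle variables (the content of Lemma \ref{lem:reflectionsangles} and the construction in Lemma \ref{lem:lshaped}), read the topology off the resulting right-angled region, and note that the set of active walls changes only at the finitely many threshold energies $e_i=V_i(q_i^{wall,j})$, so the iso-energy surface decomposes into finitely many equi-topology families. Where you genuinely differ is in how the genus is extracted. The paper argues pictorially on the full $2\pi\times2\pi$ angle torus: each barrier produces some number of rectangular or nibbled holes with opposite-side gluings (one per step, four per block, etc., Fig.\ \ref{fig:torus1}), and the genus and component count are then ``manipulated'' by the choice of scattering geometry, with no closed formula stated. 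You instead work on the quarter fundamental domain, unfold the right-angled polygon $P(e_1,h)$ by the Klein four-group, and apply Gauss--Bonnet to the cone points (angle $6\pi$ at each active reflex corner), obtaining the explicit count $g=1+R$. This buys a clean, checkable formula that reproduces the rectangle ($g=1$), the L-shape ($g=2$) and the interior block ($R=4$, $g=5$, matching the paper's ``four holes''), reduces the ``any genus $\geq 1$'' claim to exhibiting a staircase with $g-1$ active reflex corners, and is actually sharper than literal hole-counting for nibbled holes (a two-step staircase gives one hole but two reflex corners, hence $g=3$). The price is that you must invoke the standard facts about unfoldings of right-angled tables (supplied by the paper's reference \cite{Athreya2012}) and apply Gauss--Bonnet component by component when the accessible region is disconnected.

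The one point you flag as an obstacle but should be stated as a verified step is the disconnection mechanism: a full-height strip pulls back to two essential annuli in the angle torus, so it genuinely separates the level set, and after the wall identifications each remaining annulus closes up into a torus; this is exactly what the paper's Fig.\ \ref{fig:torus1} asserts for the strip at intermediate energies. With that checked, your argument is a complete and somewhat more quantitative version of the paper's sketch.
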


\begin{thm}\label{thm:mainIEMsh} The return map of the step system to the section \(\Sigma_{1}=\{(q,p)|p_{1}=0,\dot p_1<0\}\)  for each  iso-energetic level set in \(\mathcal{R}^{c}(h)\) is conjugated to an interval exchange map of three intervals on a circle.  Restricting the angle to a natural fixed fundamental interval, for almost all level sets in  \(\mathcal{R}^{c}(h)\), the map becomes a five-interval exchange map (5-IEM). The return map to the section \(\Sigma_{1}\)  for  iso-energy level sets in the complement to \(\mathcal{R}^{c}(h)\) is a rotation on a circle, namely a 2-IEM.
\end{thm}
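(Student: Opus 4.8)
The plan is to deduce everything from the L-shaped-billiard conjugacy of Theorem~\ref{thm:mainflow} together with the elementary fact that the first-return map of a directional (translation) flow on a rational polygon to a transverse segment is an interval exchange map. First I would dispose of the complement case: for level sets outside $\mathcal{R}^{c}(h)$ Theorem~\ref{thm:mainflow} already conjugates the flow to a directional flow on a \emph{rectangular} billiard, i.e.\ a linear flow on a flat two-torus; the return map of such a flow to the transversal circle is a rigid rotation, which with respect to a fixed fundamental interval is a $2$-IEM. This settles the last assertion immediately, so the work is concentrated on the level sets in $\mathcal{R}^{c}(h)$, where the billiard is genuinely L-shaped.

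Next I would identify the section. Since $\dot p_{1}=-V_{1}'(q_{1})$, the conditions $p_{1}=0,\ \dot p_{1}<0$ single out the right turning point $q_{1}=q_{1}^{max}(e_{1})$, which under the conjugacy is the full right edge of the L-shape and is crossed transversally by the flow; parametrizing it by the second angle $\theta_{2}$ exhibits $\Sigma_{1}$ as a circle $\mathbb{R}/2\pi\mathbb{Z}$. Using the action-angle description of Lemma~\ref{lem:reflectionsangles}, in which impacts are reflections of the angles at the wall phases $\theta_{i}^{wall}$, I would follow the orbit from $\Sigma_{1}$ and group initial phases $\theta_{2}$ according to the combinatorial type of the excursion --- whether the first oscillator merely bounces off the $1$-wall (and so never enters $q_{1}<q_{1}^{wall}$), or passes into $q_{1}<q_{1}^{wall}$ and there possibly reflects off the $2$-wall. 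On each such cell the return map is a composition of the linear flow with a fixed sequence of angle reflections, hence a rotation by a locally constant amount; the cell boundaries are exactly the phases whose orbit grazes the re-entrant corner $(q_{1}^{wall},q_{2}^{wall})$.

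The heart of the proof is the count of these boundaries, which I would establish in two complementary ways. Conceptually, unfolding the L-shape produces the genus-two translation surface of Corollary~\ref{cor:nontrivialtop}, which lies in the stratum carrying a single cone point of angle $6\pi$; the discontinuities of the first-return map to $\Sigma_{1}$ are precisely the first intersections with $\Sigma_{1}$ of the backward separatrices issuing from this cone point, and a cone angle $6\pi=2\pi\cdot 3$ carries exactly three such separatrices, giving three discontinuities and hence a $3$-IEM on the circle. Directly, I would verify by the explicit orbit bookkeeping above that there are exactly three corner-grazing phases per return. Passing from the circle to a fixed fundamental interval then cuts the circle at its endpoint $\theta_{2}=0$, which adds that point together with its preimage $R^{-1}(0)$ as two further breakpoints, turning the $3$-IEM into a $5$-IEM; the explicit forms are the content of Theorems~\ref{thm:mainmapR12} and~\ref{thm:mainmap} in Section~\ref{sec:returnmaps}.

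Finally, the qualifier ``almost all'' in the $5$-IEM statement accounts for the non-generic level sets on which one of the five intervals degenerates --- when $\theta_{2}=0$ or its preimage coincides with a corner-grazing phase, or a backward separatrix closes up into a saddle connection. These are the isolated level sets with a vanishing interval length exhibited in Section~\ref{sec:moreproperties}, a measure-zero set, off which the map is a genuine $5$-IEM. I expect the main obstacle to be the direct interval count: showing that exactly three corner-grazing orbits occur per return (no more and no fewer), controlling in particular the orbits that interact with the $2$-wall, and checking that $\Sigma_{1}$ is a global transversal so that every non-singular orbit does return. This detailed bookkeeping --- which also pins down the three rotation amounts --- is exactly the explicit computation carried out in Section~\ref{sec:returnmaps}.
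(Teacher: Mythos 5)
Your proposal follows essentially the same route as the paper: reduce via Theorem \ref{thm:mainflow} to the directional flow on the flat surface (torus, resp.\ glued Swiss cross), identify $\Sigma_{1}$ with the $\theta_{2}$-circle $\theta_{1}=0$, obtain a rotation (2-IEM) in the complement and a three-interval circle exchange in $\mathcal{R}^{c}(h)$ whose discontinuities are the corner-grazing phases, cut at the fundamental-interval endpoint and its preimage to get the 5-IEM, and dispose of degeneracies as isolated $e_{1}$ values --- which is exactly the structure of the paper's proof, with the explicit bookkeeping deferred to Theorems \ref{thm:mainmapR12} and \ref{thm:mainmap} just as the paper does. The only added element is your cone-angle ($6\pi$, three separatrices) count of the discontinuities, which the paper obtains instead by the direct identification of the intervals $J_{R},J_{K_{2}},J_{K_{2}+1}$ and by citing the general theory of flat surfaces; both are valid.
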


\begin{figure}
\begin{centering}
\includegraphics[scale=0.35]{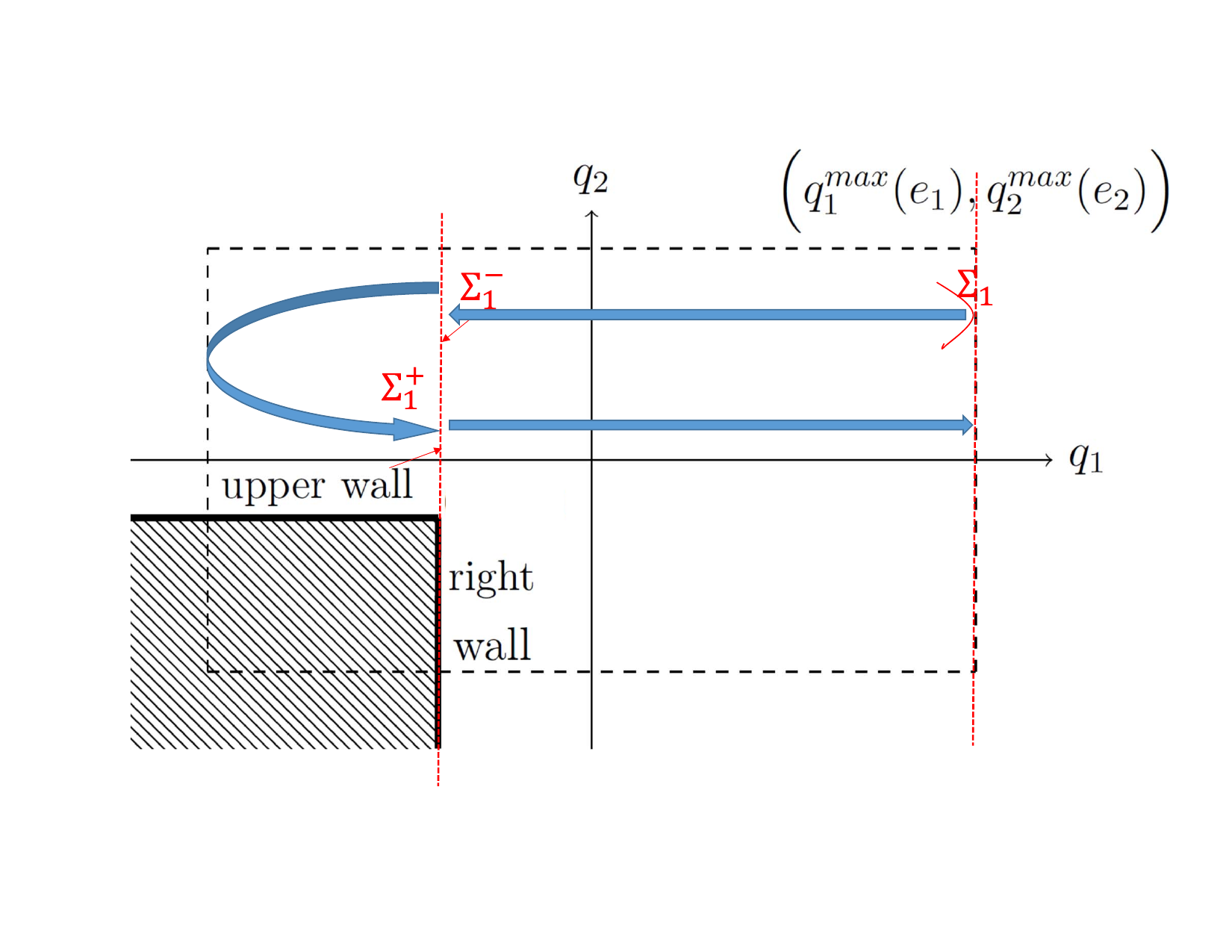}
\par\end{centering}
\protect\caption{\label{fig:returnmap}The return map geometry in configuration space.}
\end{figure}

In section \ref{sec:returnmaps},  explicit formulae (up to quadratures) for the return map at the iso-energy level sets \((e_{1},h-e_1)\) are derived (for concreteness we consider the return map to \(\Sigma_{1}\) -  the analogous computations for the return map to \(\Sigma_{2}\) amounts to replacing \(1\leftrightarrow2\) in all definitions, and the same conclusions apply). These computations show that the numerical properties of three functions of \(e_{1}\) (the functions \(\theta_{2}^{wall}(h-e_1),\Theta_{2}(e_{1},h),\chi_2(e_{1},h)\) defined by Eq. (\ref{eq:thetaiwalldef},\ref{eq:Theta},\ref{eq:psi})) determine the 5-IEM. In section \ref{sec:moreproperties} we discuss some properties of these functions and establish that there are isolated strongly resonant level sets at which orbits of different periods co-exist, level sets for which periodic and quasi-periodic motion co-exist, and,  isolated level sets in   \(\mathcal{R}^{c}(h)\) at which the IEM reduces to a rotation (at these values the level set surface is a lattice surface).
  We believe all the other level sets have minimal dynamics and almost all of them have uniquely ergodic dynamics. Proving this conjecture, namely the genericity of the iso-energy curve of directional L-shaped billiard flows as in \cite{Fraczek2018,frkaczek2019recurrence}, is beyond the scope of this paper.

For the linear oscillator step system, the period of the smooth motion does not depend on the energy, namely,  \(T_{i}(e_i)=\frac{2\pi }{w_{i}}\), and
 $q_i^{max}(e_i)=-q_i^{min}(e_i)=\sqrt{2e_{i}}/\omega_i$, hence all the functions which determine the dynamics are explicit:
\begin{align}
h^{step,LO}&=\half (\omega^{2}_{1}(q_1^{wall})^2+\omega^{2}_{2}(q_2^{wall})^2)\\
\theta_{i}^{wall,LO}(e_{i};q_{i}^{wall})&=\arccos \frac{\omega_{i}q_{i}^{wall}}{\sqrt{2e_{i}}}\in(0,\pi)\label{eq:thetailo}
\\
\Theta_{2}^{LO}&(e_{1})=2\frac{\omega_{2}}{\omega_{1}}\arccos \frac{\omega _{1}q_{1}^{wall}}{\sqrt{2e_{1}}}&\label{eq:Thetalo}
\\
\chi^{LO}_{2}&(e_{1},h)=\frac{\omega_{2}}{\omega_{1}}\frac{(\pi-\arccos \frac{\omega _{1}q_{1}^{wall}}{\sqrt{2e_{1}}})}{  \arccos \frac{\omega _{2}q_{2}^{wall}}{\sqrt{2(h-e_{1})}}} \label{eq:thetxiLO}
\end{align}

\begin{thm}\label{thm:mainflowlin} For \(h>h^{step,LO}\), the flow of the Linear-Oscillators-step system on each level set in  \(\mathcal{R}^{c}(h)\)   is topologically conjugated to the directional billiard flow in the fixed direction  \((\omega_1,\omega_2)\) on the L-shaped billiard \(L(\pi,\pi,\theta_{1}^{wall,LO}(e_{1};q_{1}^{wall}),\theta_{2}^{wall,LO}(h-e_{1};q_{2}^{wall})).\)    The L arms widths depend smoothly and monotonically on their arguments, and are of opposite monotonicity iff \(q_{1}^{wall}q_{2}^{wall}>0\).
The return map to the section \(\Sigma_{1}\)  is an IEM of the form (\ref{eq:explicitiemap}) with \((\theta_{2}^{wall},\Theta_{2},\chi_2)=(\theta_{2}^{wall,LO}(h-e_{1};q_{2}^{wall}),\Theta_{2}^{LO}(e_{1}),\chi_{2}^{LO}(e_{1},h))\) of  Eqs (\ref{eq:thetailo},\ref{eq:Thetalo},\ref{eq:thetxiLO}).   \end{thm}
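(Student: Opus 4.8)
The plan is to derive Theorem~\ref{thm:mainflowlin} as a specialization of the general results already established: Theorem~\ref{thm:mainflow} for the flow, and Theorem~\ref{thm:mainIEMsh} together with the explicit return-map Theorems~\ref{thm:mainmapR12} and \ref{thm:mainmap} for the IEM. First I would verify that the quadratic potentials (\ref{eq:LOpot}) satisfy the standing hypotheses: $V_i^{LO}(q)=\half\omega_i^2q^2$ gives $qV_i'(q)=\omega_i^2q^2>0$ for $q\neq0$, so each potential has a single concave minimum at the origin and every conclusion of the general theorems applies verbatim on $\mathcal{R}^c(h)$. Substituting $V_i^{LO}$ into (\ref{eq:histepdef}) immediately gives $h^{step}=h^{step,LO}=\half(\omega_1^2(q_1^{wall})^2+\omega_2^2(q_2^{wall})^2)$.

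The substantive step is the evaluation of the abstract quadratures in closed form. Since $q_i^{max}(e_i)=\sqrt{2e_i}/\omega_i$, the substitution $q=(\sqrt{2e_i}/\omega_i)\sin\phi$ reduces the impact half-period to an elementary integral,
\begin{equation}
\tilde T_i(e_i;q_i^{wall})=2\int_{q_i^{wall}}^{\sqrt{2e_i}/\omega_i}\frac{dq}{\sqrt{2e_i-\omega_i^2q^2}}=\frac{2}{\omega_i}\arccos\frac{\omega_i q_i^{wall}}{\sqrt{2e_i}},
\end{equation}
whence (\ref{eq:thetaiwalldef}) and $T_i(e_i)=2\pi/\omega_i$ give $\theta_i^{wall,LO}=\arccos(\omega_i q_i^{wall}/\sqrt{2e_i})$. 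The constraint $e_i>h_i^{step}$ forces $|\omega_i q_i^{wall}/\sqrt{2e_i}|<1$, so $\theta_i^{wall,LO}\in(0,\pi)$ and the $\arccos$ derivative neither vanishes nor blows up on $\mathcal{R}^c(h)$, which yields the smoothness of the arm widths. Because $\omega_i$ is energy-independent, the direction $(\omega_1(e_1),\omega_2(h-e_1))$ of Theorem~\ref{thm:mainflow} collapses to the fixed vector $(\omega_1,\omega_2)$, giving the first assertion. Feeding these expressions into the general IEM (\ref{eq:explicitiemap}), and using the definitions (\ref{eq:Theta}) and (\ref{eq:psi}) of $\Theta_2$ and $\chi_2$ --- which for $T_i=2\pi/\omega_i$ reduce to $\Theta_2^{LO}=2(\omega_2/\omega_1)\theta_1^{wall,LO}$ and $\chi_2^{LO}=(\omega_2/\omega_1)(\pi-\theta_1^{wall,LO})/\theta_2^{wall,LO}$ --- reproduces (\ref{eq:thetailo})--(\ref{eq:thetxiLO}) and the last assertion.

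What remains is the monotonicity claim, which I expect to be the one point needing genuine care rather than bookkeeping. Differentiating, $\partial_{e_1}\theta_1^{wall,LO}$ carries the sign of $q_1^{wall}$; the second arm enters through the argument $h-e_1$, so by the chain rule $\partial_{e_1}\theta_2^{wall,LO}(h-e_1)$ carries the sign of $-q_2^{wall}$. Regarding both arm widths as functions of the single level-set parameter $e_1$, they therefore vary in opposite senses precisely when $\mathrm{sign}(q_1^{wall})=\mathrm{sign}(q_2^{wall})$, i.e.\ when $q_1^{wall}q_2^{wall}>0$ (the case $q_i^{wall}=0$ being excluded by (\ref{eq:stepdef})). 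The main obstacle is thus not analytic difficulty but the consistent tracking of this reflection of the second argument through $h-e_1$, and of the orientation conventions inherited from the general return-map construction; everything else is a direct substitution into results already proved.
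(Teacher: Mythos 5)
Your proposal is correct and follows essentially the same route as the paper: specialize the general Theorems \ref{thm:mainflow} and \ref{thm:mainmap} to the quadratic potentials, obtain \(\theta_i^{wall,LO}=\arccos(\omega_i q_i^{wall}/\sqrt{2e_i})\) in closed form (the paper reads this off from the explicit action--angle transformation \(q_i=\sqrt{2I_i/\omega_i}\cos\theta_i\) rather than evaluating the period integral, but the computation is the same), note that energy-independence of \(\omega_i\) fixes the direction, and settle the monotonicity claim by the same sign analysis of \(d\theta_1^{wall,LO}/de_1\) and \(d\theta_2^{wall,LO}(h-e_1)/de_1\). No gaps.
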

The proof and other properties of the step LO are presented in section \ref{sec:linearosc}.

\section{The flow on level sets and the corresponding flat surfaces }\label{sec:proofmain}

\begin{figure}
\begin{centering}
\includegraphics[scale=0.3]{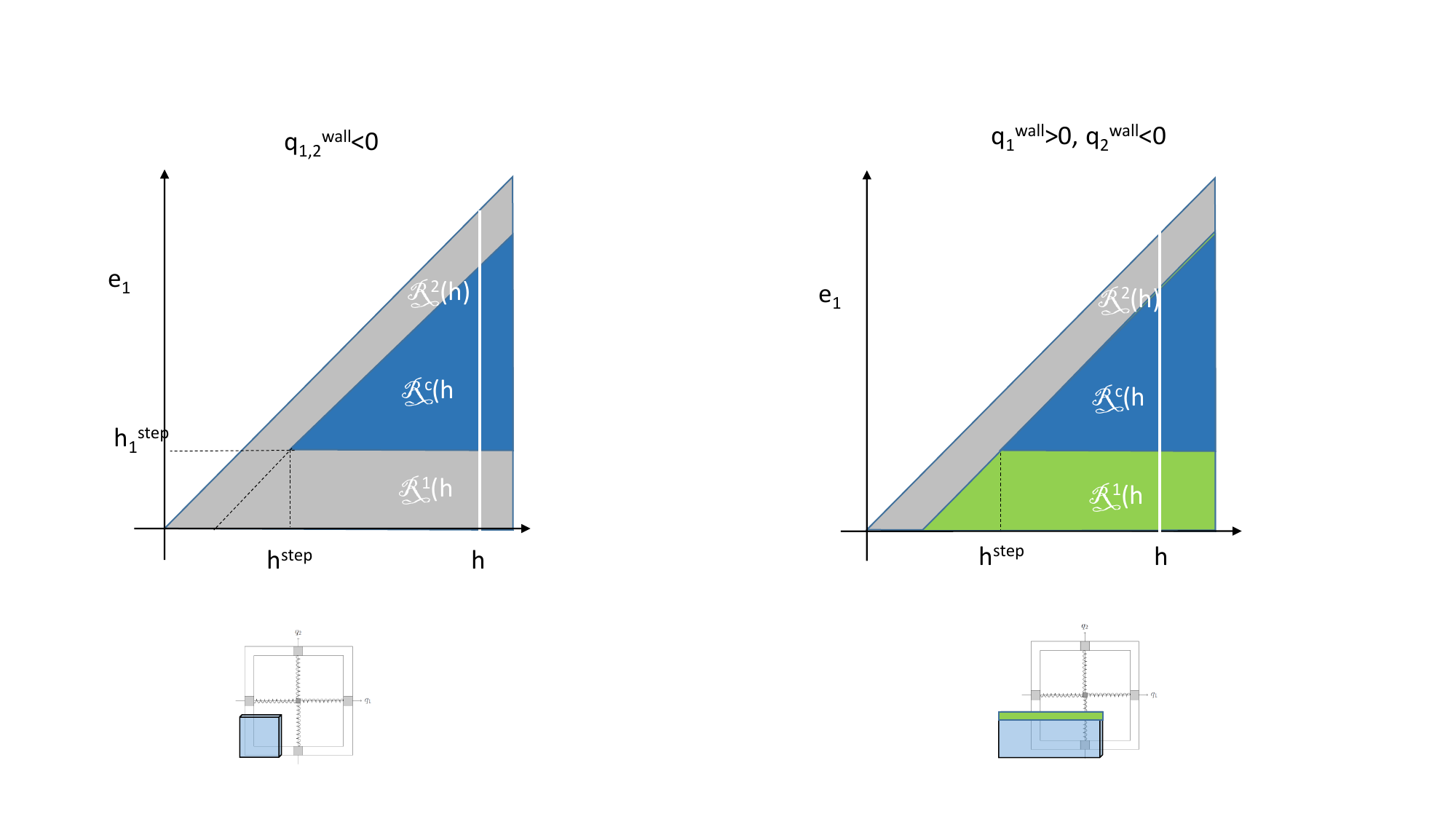}
\includegraphics[scale=0.3]{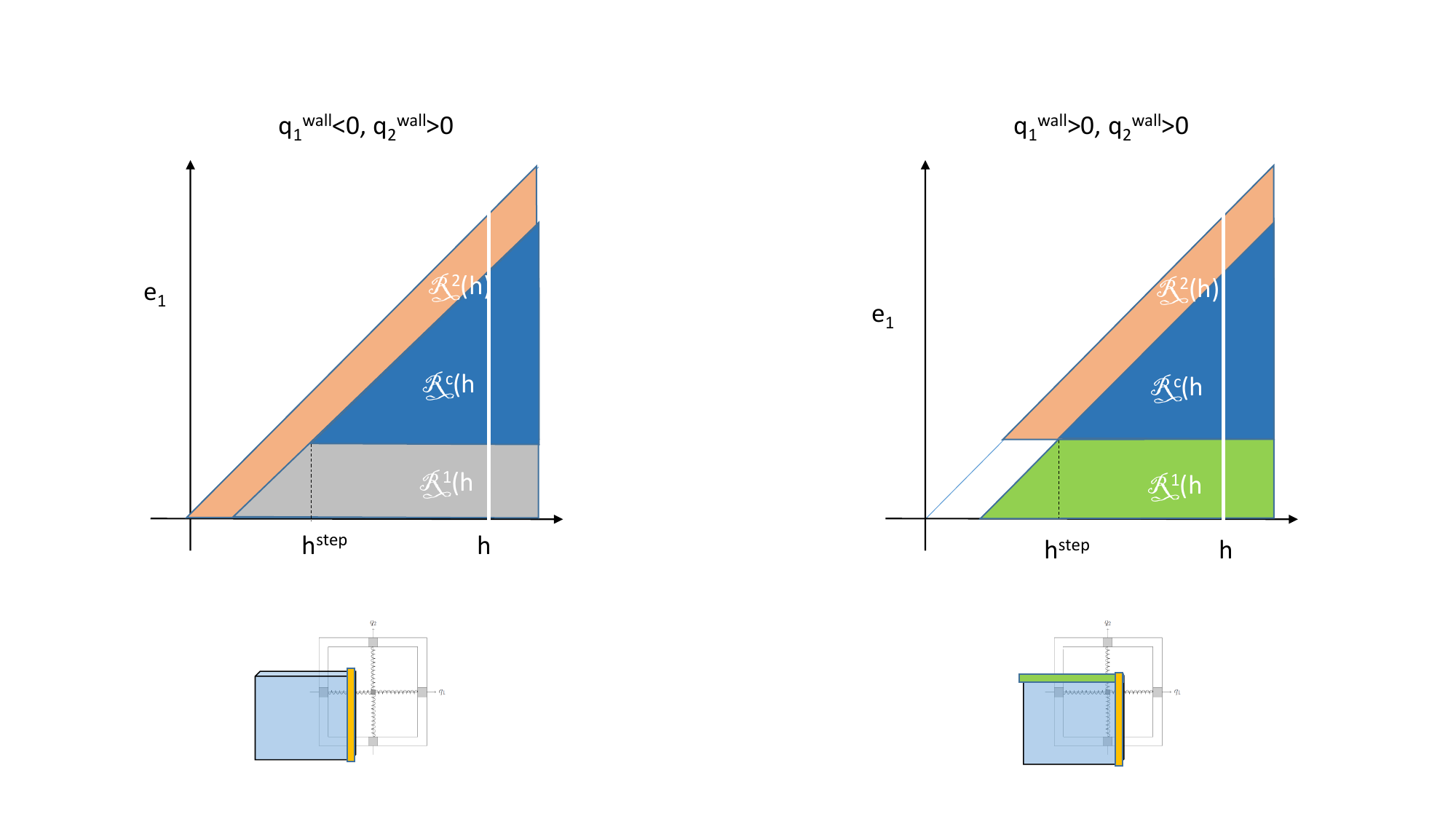}
\end{centering}
\protect\caption{\label{fig:embd}Impact-Energy momentum bifurcation diagram for the four relative positions of the step. The region where motion is allowed and no impacts occur (grey), the region where impacts occur at both sides of the step (blue) and the regions where impacts occur only at the upper (green) or right (orange) sides of the step are shown (see Lemmas  \ref{lemm:r12c}-\ref{lem:rcbothsides}).}
\end{figure}

In this section we prove Theorem  \ref{thm:mainflow}. The main observation is that in terms of the smooth action angle coordinates, for the proper range of energies (the region  \(\mathcal{R}^{c}(h)\)), impacts from the step correspond to a rectangular hole in the angle coordinates. Folding the torus according to the direction of motion in the configuration space leads to the motion in an L-shaped billiard with prescribed direction of motion and prescribed dimensions (up to quadratures).
The rotational motion in the complimentary regions to   \(\mathcal{R}^{c}(h)\) follows from realizing that in these regions, for each level set, either there are no impacts at all or all impacts occur with only one side of the step.

\noindent\textit{Proof of Theorem \ref{thm:mainflow}}: we first divide the level sets to three different classes according to the different types of impacts that may occur in each of  them (lemmas \ref{lemm:r12c}-\ref{lem:rcbothsides}). We then introduce the action-angle coordinates for the smooth system, fold them to the proper billiard table (an  L-shaped table for level sets in   \(\mathcal{R}^{c}(h)\) and a rectangular table  for the other level sets), and  establish that the impacts from the step in the flow are mapped to  impacts from the corresponding  boundaries of the billiard table.

\textbf{Delineating the energy level sets according to the impacts character:}

In the next few lemmas we detail how the collisions with the step depend on both the energy in each direction and on the location of the step. This classification, which is summarized by  Fig \ref{fig:embd} and its implications are shown in Fig. \ref{fig:levelsetboxes}, determines to which billiard table the flow on the level set is conjugated.
 Let \begin{equation}
\mathcal{R}(h)=\{(e_{1},e_{2})|e_{1,2} >0,\ e_{1}+e_{2}=h\},\qquad
\label{eq:defsetR}\end{equation} denote the open segment of allowed level set energy values on the isoenergy surface \(h\) (the white line in Fig \ref{fig:embd}) and by \(\mathcal{\bar R}(h)\) the corresponding closed interval. For all \(h>h^{step}\),  the isoenergy step-collision set,      \(\mathcal{R}^{c}(h)\), is an open segment  in the interior of    \(\mathcal{R}(h)\).  Define the two iso-energy complementary  closed segments:
\begin{equation}
\mathcal{\bar R}^i(h)=\{(e_{1},e_{2})|0\leqslant\ e_{i}\leqslant\  \min \{h,h^{step}_i\} , \quad\ e_{\bar i}=h-e_{i}\},
\label{eq:defsetRi}\end{equation}
(with interior open segments,  \(\mathcal{R}^{i}(h)\)), where, hereafter, we denote by \(\bar i\) the complement d.o.f. to \( i\) (namely \(\bar 1=2,\bar 2=1)  \).  Fig. \ref{fig:embd} shows these sets in the energy-momentum diagram for different locations of the walls.

\begin{figure}
\begin{centering}
\includegraphics[scale=0.2]{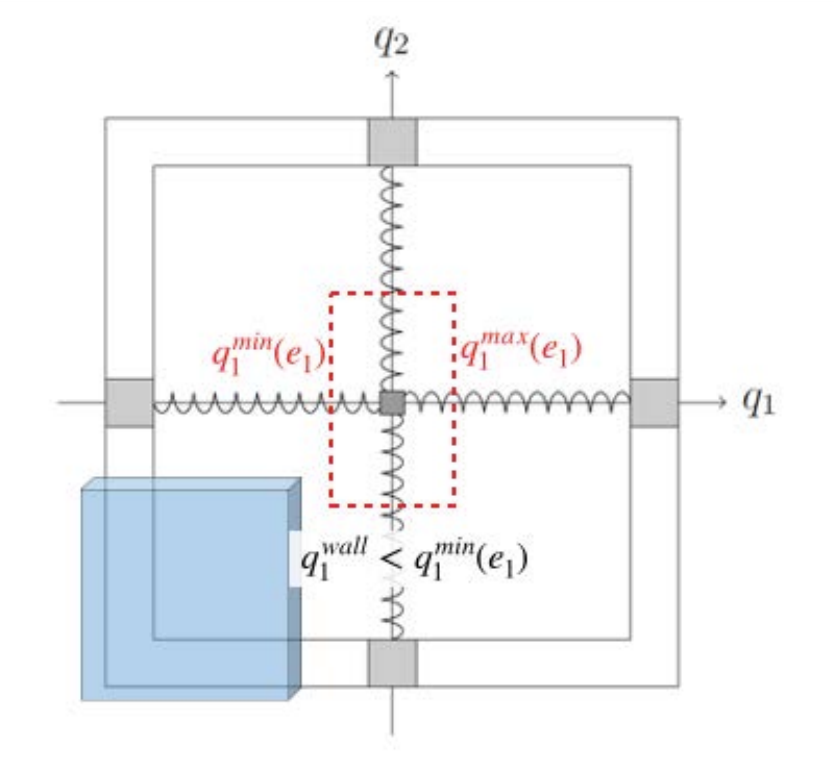}
\quad \includegraphics[scale=0.2]{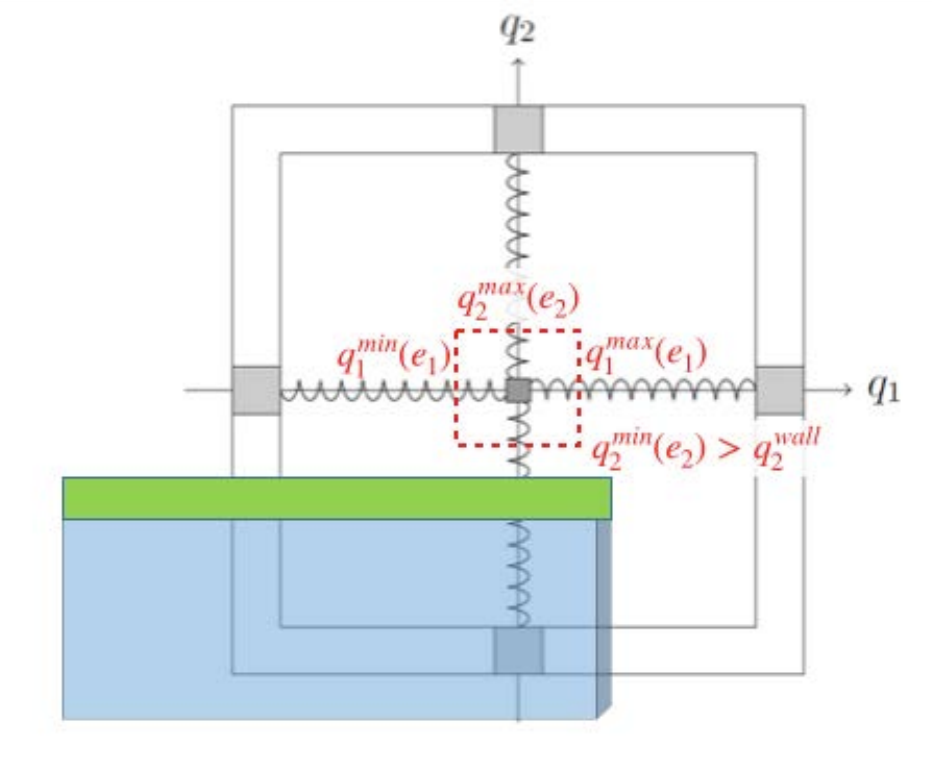}
\quad \includegraphics[scale=0.2]{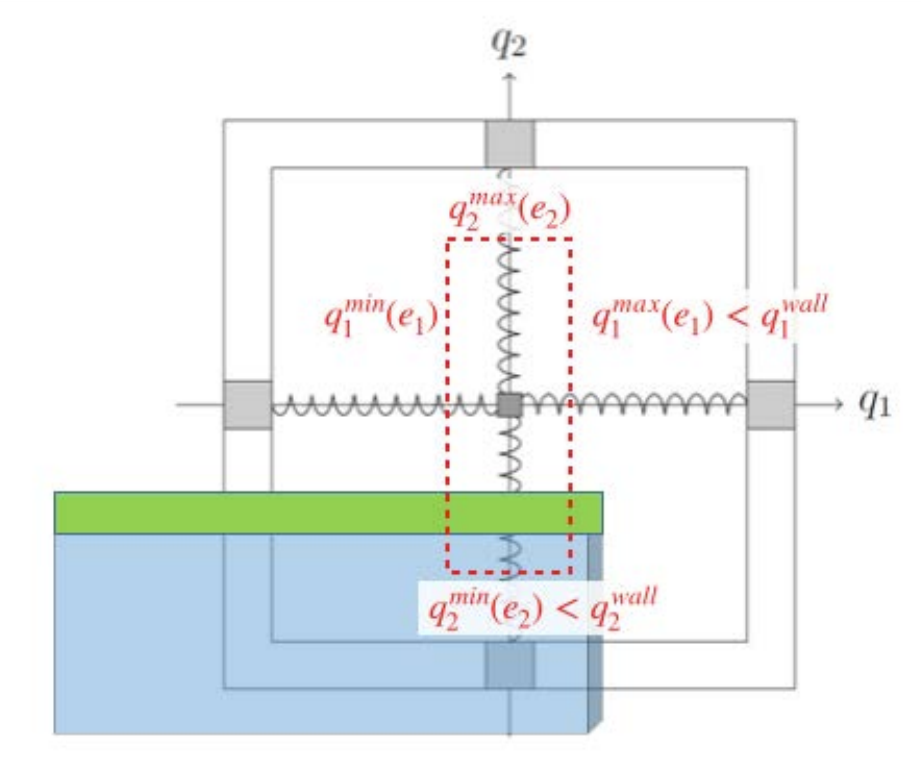}
\quad \includegraphics[scale=0.2]{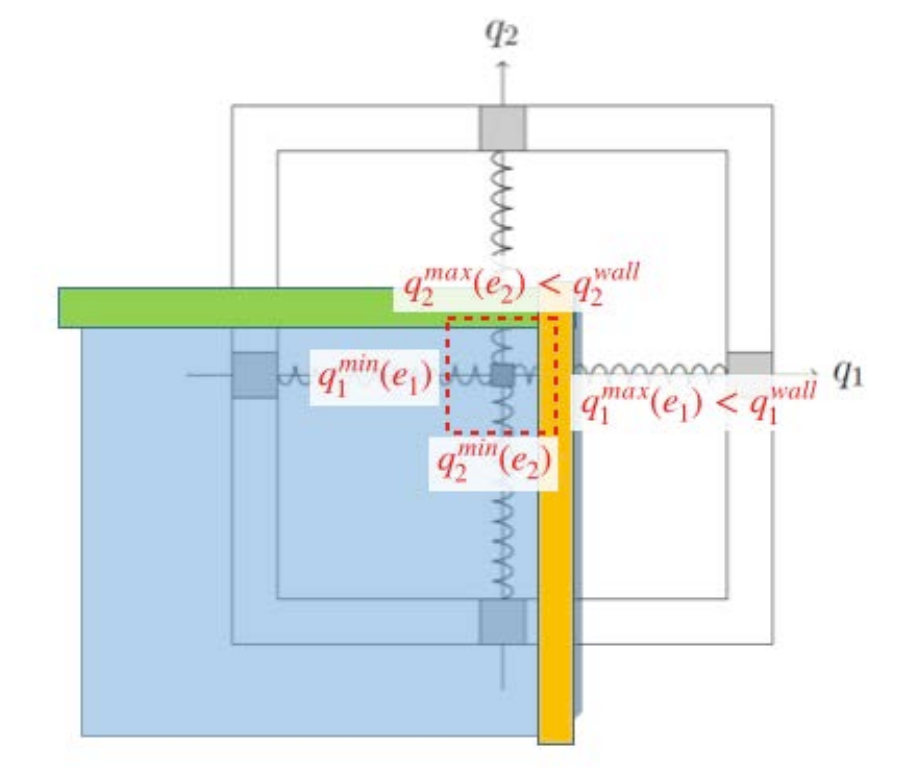}\\
\qquad \qquad \ \ (a) \qquad\qquad \qquad\quad\qquad (b) \quad \ \qquad \qquad\qquad\qquad (c) \ \ \quad \quad\qquad \qquad\qquad (d)
\end{centering}
\protect\caption{\label{fig:levelsetboxes} The level sets projection to the configuration space (dashed red box) for level sets in \( \mathcal{R}^1\) (see Lemmas  \ref{lemm:r12c} and \ref{lem:ridyn}); (a) No impacts:  \(q_{1,2}^{wall}<0, (e_{1},e_2)\in \mathcal{R}^1 \), (b) No impacts:  \(q_{1}^{wall}>0,q_{2}^{wall}<0, (e_{1},e_2)\in \mathcal{R}^1\cap \mathcal{R}^2 \),  (c) Impacts only with the 2-boundary (upper boundary)\ \(q_{1}^{wall}>0,q_{2}^{wall}<0, (e_{1},e_2)\in \mathcal{R}^1\setminus \mathcal{R}^2 \),  (d) No  motion for this level set: \(q_{1}^{wall}>0,q_{2}^{wall}>0, (e_{1},e_2)\in \mathcal{R}^1\cap \mathcal{R}^2 \).  }
\end{figure}
{\begin{lem}
\label{lemm:r12c}All trajectories belonging to level sets in   \(\mathcal{R}^{i}(h)\) do not hit the  \(i\)-boundary. For \(0<h<h^{step}\),   \(\mathcal{R}(h)=\mathcal{R}^1(h)\cup\mathcal{R}^2(h)\) and the segment  \(\mathcal{\bar R}^1(h)\cap\mathcal{\bar R}^2(h)\) is  non-empty. For all  \(h>h^{step}\),  \(\mathcal{\bar R}(h)=\mathcal{\bar R}^1(h)\cup\mathcal{R}^c(h)\cup\mathcal{\bar R}^2(h)\) and these three segments are non-empty and disjoint.
\end{lem}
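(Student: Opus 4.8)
\noindent\textit{Proof proposal.} The plan is to split the statement into two essentially independent pieces: a dynamical claim (trajectories on level sets in \(\mathcal{R}^i(h)\) never meet the \(i\)-boundary) and a purely combinatorial claim about how the iso-energy segment \(\mathcal{\bar R}(h)\) decomposes. The dynamical claim is the conceptual core and rests on separability; the partition statements are then elementary interval bookkeeping, once one records that \(h_i^{step}=V_i(q_i^{wall})>0\) (which holds because \(q_i^{wall}\neq0\) and \(V_i>0\) off the origin).

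For the dynamical claim I would first observe that, since the Hamiltonian is separable and each impact flips only the momentum normal to the wall it hits, the value \(e_i=H_i(q_i,p_i)\) is conserved along the entire (possibly impacting) trajectory and, crucially, an impact on the \(\bar i\)-boundary leaves the pair \((q_i,p_i)\) untouched. Hence, regardless of the behavior in the \(\bar i\) direction, the coordinate \(q_i(t)\) is confined for all time to \(\{q:V_i(q)\le e_i\}\), which by concavity of \(V_i\) is exactly \([q_i^{min}(e_i),q_i^{max}(e_i)]\). Next I would use the monotonicity of \(V_i\) away from the origin (the hypothesis \(qV_i'(q)>0\)) together with \(V_i(q_i^{wall})=h_i^{step}\) to show that \(q_i^{wall}\) lies strictly outside this interval precisely when \(e_i<h_i^{step}\): for \(q_i^{wall}>0\) this is \(q_i^{max}(e_i)<q_i^{wall}\iff e_i<h_i^{step}\), and \(q_i^{wall}<0\) is symmetric via \(q_i^{min}\). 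Since every level set in \(\mathcal{R}^i(h)\) has \(e_i<\min\{h,h_i^{step}\}\le h_i^{step}\), the coordinate \(q_i\) never reaches \(q_i^{wall}\), so the \(i\)-boundary (which requires \(q_i=q_i^{wall}\)) is never hit.

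The combinatorial claim is a case analysis on the sign of \(h-h^{step}\), parametrizing \(\mathcal{\bar R}(h)=[0,h]\) by \(e_1\) (with \(e_2=h-e_1\)). Rewriting the definitions gives \(\mathcal{\bar R}^1(h)=\{e_1\le a\}\) and \(\mathcal{\bar R}^2(h)=\{e_1\ge b\}\) with cut points \(a=\min\{h,h_1^{step}\}\) and \(b=\max\{0,h-h_2^{step}\}\), so everything reduces to comparing \(a\) and \(b\). For \(0<h<h^{step}\) I would show \(b<a\) by splitting on whether \(h\le h_1^{step}\) and using \(h-h_2^{step}<h_1^{step}\) together with \(h_i^{step}>0\); this yields both \(\mathcal{R}^1(h)\cup\mathcal{R}^2(h)=(0,a)\cup(b,h)=\mathcal{R}(h)\) and the non-emptiness of \(\mathcal{\bar R}^1(h)\cap\mathcal{\bar R}^2(h)=[b,a]\). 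For \(h>h^{step}\) both minima equal the step energies, so \(a=h_1^{step}<h-h_2^{step}=b\), and the three segments \([0,h_1^{step}]\), \((h_1^{step},h-h_2^{step})\), \([h-h_2^{step},h]\) visibly tile \([0,h]\), are pairwise disjoint, and are each non-empty because \(h_1^{step}>0\), \(h_1^{step}<h-h_2^{step}\), and \(h_2^{step}>0\).

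The only genuinely delicate point is the confinement argument of the second paragraph: one must invoke the pointwise separability symmetry to guarantee that \(\bar i\)-impacts cannot enlarge the accessible \(q_i\)-range, so that the one-degree-of-freedom turning-point interval \([q_i^{min}(e_i),q_i^{max}(e_i)]\) governs the full impact dynamics; everything afterward is monotonicity of \(V_i\) and elementary interval arithmetic. A minor care point is the boundary energy \(e_i=h_i^{step}\), at which \(q_i^{wall}\) coincides with a turning point (a tangency with vanishing normal velocity); this is excluded from the open set \(\mathcal{R}^i(h)\) and is precisely the singular energy isolated in Corollary \ref{cor:nontrivialtop}.
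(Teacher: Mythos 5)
Your proposal is correct and follows essentially the same route as the paper: the \(i\)-boundary claim is obtained from the confinement of \(q_i(t)\) to \([q_i^{min}(e_i),q_i^{max}(e_i)]\) together with the nestedness/monotonicity of the turning points in \(e_i\) (the paper phrases this as \([q_i^{min}(e_i),q_i^{max}(e_i)]\subset(q_i^{min}(h_i^{step}),q_i^{max}(h_i^{step}))\) with \(q_i^{wall}\) an endpoint of the larger interval), and the decomposition of \(\mathcal{\bar R}(h)\) is the same interval bookkeeping the paper dismisses as following from the definitions. Your explicit remark that \(\bar i\)-impacts leave \((q_i,p_i)\) untouched, and your worked-out comparison of the cut points \(a\) and \(b\), merely spell out steps the paper leaves implicit.
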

 \begin{proof}Since the potentials are concave the level sets are nested. Level sets in the interior of  \(\mathcal{R}^{i}(h)\) satisfy \( e_{i}< h^{step}_i\), hence, for all \(t\), the trajectories satisfy:  \( q_i(t;e_i)\in[q_i^{min}(e_i),q_i^{max}(e_i)]\subset( q_i^{min}(h^{step}_i),q_i^{max}(h^{step}_i))\). By definition,    \(q_i^{wall}\in\{q_i^{min}(h^{step}_i),q_i^{max}(h^{step}_i)\} \) so such trajectories do not cross the line \(q_{i}=q_i^{wall}\) and the step \(i\)th boundary cannot be impacted.  The rest of the lemma  follows from the definitions of  \(\mathcal{R}(h),\mathcal{R}^{1,2}(h),\mathcal{R}^c(h)\) (Eqs.  (\ref{eq:defsetRc}),(\ref{eq:defsetR}),(\ref{eq:defsetRi})), see also Fig. \ref{fig:embd}.
\end{proof}
   Fig. \ref{fig:levelsetboxes} demonstrates   that  in accordance with lemma \ref{lemm:r12c}, level sets that belong to \(\mathcal{R}^1(h)\) do not impact  the \(1\)-boundary (the right side of the step).
Next we establish when such level sets impact the  \(2\)-boundary (the upper side of the step).:
\begin{lem}\label{lem:ridyn}
{  If  \(q_{i}^{wall}<0 \), trajectories associated with level sets in    \(\mathcal{R}^{i}(h)\)  do not hit the step.
If   \(q_{i}^{wall}>0,  \) the dynamics in   \(\mathcal{R}^{i}(h)\)  is further divided to the following two cases: \begin{description}
\item[ For level sets in   \(\mathcal{R}^{i}(h)\backslash\mathcal{\bar R}^{\bar i}(h)\):] trajectories    hit the  \(\bar i\)-boundary only, and the impacts are transverse.
\item[ For level sets in    \(\mathcal{ R}^1(h)\cap\mathcal{ R}^2(h)\):]
  trajectories  do not hit the step if   \(q_{\bar i}^{wall}<0 \) and are not in the allowed region of motion if    \(q_{\bar i}^{wall}>0 \).
\end{description}}
      \end{lem}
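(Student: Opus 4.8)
The plan is to reduce every claim to two elementary facts about the smooth one-degree-of-freedom oscillations. First, the oscillation interval $[q_i^{min}(e_i),q_i^{max}(e_i)]$ is strictly nested in $e_i$, a consequence of concavity already exploited in Lemma \ref{lemm:r12c}. Second, the wall location is an extreme point of the interval at the critical energy $h_i^{step}=V_i(q_i^{wall})$: precisely $q_i^{wall}=q_i^{max}(h_i^{step})$ when $q_i^{wall}>0$ and $q_i^{wall}=q_i^{min}(h_i^{step})$ when $q_i^{wall}<0$. Since between impacts the flow is the product of two independent oscillations and the energies $e_i$ are conserved, each coordinate sweeps its own interval (modified only by reflections), and the particle can reach the $j$-boundary ($j\in\{1,2\}$, complement $\bar j$) only at points where $q_j=q_j^{wall}$ and $q_{\bar j}<q_{\bar j}^{wall}$. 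I will use this characterization throughout. The common starting point is that a level set in $\mathcal{R}^i(h)$ has $e_i<h_i^{step}$, so by nesting the $i$-coordinate stays strictly inside $(q_i^{min}(h_i^{step}),q_i^{max}(h_i^{step}))$ and never attains $q_i^{wall}$; this recovers Lemma \ref{lemm:r12c}.

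First I would dispose of $q_i^{wall}<0$. Here $q_i^{wall}=q_i^{min}(h_i^{step})$, so $e_i<h_i^{step}$ and nesting give $q_i(t)\geq q_i^{min}(e_i)>q_i^{wall}$ for all $t$. The inequality $q_i>q_i^{wall}$ already removes the trajectory from the half-plane $\{q_i<q_i^{wall}\}\supset S$, so neither boundary can be reached and there are no impacts. For $q_i^{wall}>0$ the same computation with the upper endpoint yields the opposite confinement $q_i(t)\leq q_i^{max}(e_i)<q_i^{wall}$, i.e. the trajectory lies permanently in the strip $\{q_i<q_i^{wall}\}$ over the step, where the $\bar i$-wall is always active. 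The remaining analysis then depends only on the $\bar i$-energy $e_{\bar i}=h-e_i$ compared with $h_{\bar i}^{step}$, which is exactly what separates $\mathcal{R}^i(h)\setminus\mathcal{\bar R}^{\bar i}(h)$ (where $e_{\bar i}>h_{\bar i}^{step}$) from $\mathcal{R}^1(h)\cap\mathcal{R}^2(h)$ (where $e_{\bar i}<h_{\bar i}^{step}$).

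For $\mathcal{R}^i(h)\setminus\mathcal{\bar R}^{\bar i}(h)$ I would apply the nesting/endpoint observation to the $\bar i$-coordinate: $e_{\bar i}>h_{\bar i}^{step}$ forces the free $\bar i$-interval to dip below $q_{\bar i}^{wall}$ on the $S$-side for either sign of $q_{\bar i}^{wall}$, so, since $q_i<q_i^{wall}$ always, the particle must reflect off the $\bar i$-boundary, while it cannot touch the $i$-boundary by Lemma \ref{lemm:r12c}, giving ``the $\bar i$-boundary only.'' For $\mathcal{R}^1(h)\cap\mathcal{R}^2(h)$ the reversed inequality $e_{\bar i}<h_{\bar i}^{step}$ gives, again by the sign dichotomy, either $q_{\bar i}(t)>q_{\bar i}^{wall}$ throughout (the oscillation box lies in $\{q_i<q_i^{wall},\,q_{\bar i}>q_{\bar i}^{wall}\}$, disjoint from $S$, so no step contact) when $q_{\bar i}^{wall}<0$, or $q_{\bar i}(t)<q_{\bar i}^{wall}$ throughout, which together with $q_i<q_i^{wall}$ places the entire box inside $S$, so the level set carries no admissible motion, when $q_{\bar i}^{wall}>0$. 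Most of this is bookkeeping with nested intervals; the one point needing genuine care, and the main obstacle, is the transversality claim, where I must check that each $\bar i$-impact has $p_{\bar i}\neq0$ and occurs strictly away from the corner. The former follows from energy conservation, since at $q_{\bar i}=q_{\bar i}^{wall}$ one has $\half p_{\bar i}^2=e_{\bar i}-V_{\bar i}(q_{\bar i}^{wall})=e_{\bar i}-h_{\bar i}^{step}>0$; the latter because $q_i(t)\leq q_i^{max}(e_i)<q_i^{wall}$ is strict, so the corner $(q_i^{wall},q_{\bar i}^{wall})$ is never reached. Assembling the confinement and transversality statements over the four sign/energy combinations then yields the lemma.
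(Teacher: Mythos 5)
Your proposal is correct and follows essentially the same route as the paper's proof: the case split on the signs of $q_i^{wall}$ and $q_{\bar i}^{wall}$, combined with the nesting of the oscillation intervals and the identification of $q_j^{wall}$ as an endpoint of the interval at the critical energy $h_j^{step}$, is exactly the argument given there. Your explicit justification of transversality via $\half p_{\bar i}^2=e_{\bar i}-h_{\bar i}^{step}>0$ and the strict inequality $q_i<q_i^{wall}$ keeping the trajectory away from the corner is a welcome bit of extra detail that the paper leaves implicit.
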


\begin{proof}If  \((e_{1},e_{2})\) belong to   \(\mathcal{R}^i(h)\) then \(e_{i}<h_i^{step}\) (see (\ref{eq:defsetRi})). If additionally,  \( q_{i}^{wall}<0\), then   \(q_i^{min}(e_{i})> q_{i}^{wall}\),  so the oscillation in the \(i\)th direction do not reach the wall, independently of the oscillation amplitude in the \(\bar i\) direction (see Fig \ref{fig:levelsetboxes}(a)).

If \(q_{i}^{wall}> 0\),  then \(q_i^{max}(e_{i})< q_{i}^{wall}\), so, while impacts cannot occur with the \(i\) boundary, transverse impacts with the    \(\bar i\) boundary occur  when\( \ e_{\bar i}>h_{\bar i}^{step}\), namely when \((e_{1},e_{2})\in\mathcal{R}^{i}(h)\backslash\mathcal{\bar R}^{\bar i}(h)\) (see Fig \ref{fig:levelsetboxes}(b)).

If  \(q_{i}^{wall}> 0\) and \( \ e_{\bar i}<h_{\bar i}^{step}\), so    \((e_{1},e_{2})\in\mathcal{R}^1(h)\cap\mathcal{R}^2(h),\)  the \(\bar i\) boundary cannot be crossed. If, additionally,  \(q_{\bar i}^{wall}< 0\), then \( \ e_{\bar i}<h_{\bar i}^{step}\) implies that \(q_{\bar i}^{min}(e_{\bar i})> q_{\bar i}^{wall}\) and the oscillations are in the allowed region of motion and do not hit the step (see Fig \ref{fig:levelsetboxes}(c)), whereas if   \(q_{\bar i}^{wall}> 0\) then \(q_{\bar i}^{max}(e_{\bar i})< q_{\bar i}^{wall}\) and the motion  is "behind the step" namely it is not in the allowed region of motion (see Fig \ref{fig:levelsetboxes}(d)). \end{proof}

\begin{lem}\label{lem:rcbothsides}
{  Each level set in the step collision set,  \(\mathcal{R}^{c}(h)\), includes trajectories which  impact transversely the 1-boundary and trajectories which impact transversely the 2-boundary.
  }   \end{lem}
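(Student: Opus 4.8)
The plan is to read off from the definition of $\mathcal{R}^{c}(h)$ that both partial energies strictly exceed the respective step energies, use this to place each wall strictly inside the corresponding oscillation interval, and then exhibit explicit phase points on the level set that sit on the transverse parts of the two boundaries with momentum directed into the step.

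First I would record the two defining inequalities: for $(e_{1},e_{2})\in\mathcal{R}^{c}(h)$, Eq. (\ref{eq:defsetRc}) gives $e_{1}>h_{1}^{step}$ and $e_{2}=h-e_{1}>h_{2}^{step}$. Since each $V_{i}$ is concave with minimum at the origin, the sublevel set $\{q:V_{i}(q)<e_{i}\}$ is exactly the open interval $(q_{i}^{min}(e_{i}),q_{i}^{max}(e_{i}))$; because $V_{i}(q_{i}^{wall})=h_{i}^{step}<e_{i}$, this yields $q_{i}^{min}(e_{i})<q_{i}^{wall}<q_{i}^{max}(e_{i})$ for both $i=1,2$, regardless of the sign of $q_{i}^{wall}$. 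Thus the step corner $(q_{1}^{wall},q_{2}^{wall})$ lies strictly inside the configuration-space box $[q_{1}^{min}(e_{1}),q_{1}^{max}(e_{1})]\times[q_{2}^{min}(e_{2}),q_{2}^{max}(e_{2})]$, so both the $1$-boundary segment $\{q_{1}=q_{1}^{wall},\,q_{2}^{min}(e_{2})\le q_{2}<q_{2}^{wall}\}$ and the $2$-boundary segment $\{q_{2}=q_{2}^{wall},\,q_{1}^{min}(e_{1})\le q_{1}<q_{1}^{wall}\}$ meet the box in a nondegenerate interval.

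Next I would construct the required orbit for the $1$-boundary. Choose any $q_{2}^{*}\in(q_{2}^{min}(e_{2}),q_{2}^{wall})$ and form the phase point $(q_{1}^{wall},q_{2}^{*},p_{1},p_{2})$ with $p_{1}=-\sqrt{2(e_{1}-h_{1}^{step})}$ and any $p_{2}$ satisfying $p_{2}^{2}=2(e_{2}-V_{2}(q_{2}^{*}))\ge 0$ (well defined since $q_{2}^{*}\in(q_{2}^{min},q_{2}^{max})$). By construction this point lies on the level set $(e_{1},e_{2})$, and $p_{1}\neq0$ precisely because $e_{1}>h_{1}^{step}$, so the contact with the $1$-boundary is transverse. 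Moreover $q_{2}^{*}<q_{2}^{wall}$ together with $p_{1}<0$ means that an instant later the smooth flow carries the point to $q_{1}<q_{1}^{wall},\,q_{2}<q_{2}^{wall}$, i.e. into the forbidden step $S$, while an instant earlier it sits at $q_{1}>q_{1}^{wall}$ in the allowed region; hence this is a genuine transverse impact of an impact-flow orbit. The symmetric choice $q_{1}^{**}\in(q_{1}^{min}(e_{1}),q_{1}^{wall})$, $p_{2}=-\sqrt{2(e_{2}-h_{2}^{step})}\neq0$, $p_{1}^{2}=2(e_{1}-V_{1}(q_{1}^{**}))$ produces an orbit impacting the $2$-boundary transversely, now invoking $e_{2}>h_{2}^{step}$.

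The only point requiring care, and hence the main obstacle, is to guarantee that the exhibited contact is a bona fide transverse impact rather than a tangency or a corner collision: one must keep $q_{2}^{*}$ (resp. $q_{1}^{**}$) strictly between the turning point and the wall, so the point lies in the interior of the boundary segment and the two sides of the crossing lie respectively in the allowed region and in $S$. All of this is secured by the strict inequalities $e_{i}>h_{i}^{step}$ defining $\mathcal{R}^{c}(h)$, which simultaneously give transversality ($p_{i}\neq0$) and the strict interiority $q_{i}^{min}(e_{i})<q_{i}^{wall}<q_{i}^{max}(e_{i})$.
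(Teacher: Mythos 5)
Your proposal is correct and follows essentially the same route as the paper: both arguments use the strict inequalities $e_{i}>h_{i}^{step}$ from the definition of $\mathcal{R}^{c}(h)$ to place each wall strictly inside the corresponding oscillation interval, and then exhibit explicit phase points on the open boundary segments with inward transverse momentum $-\sqrt{2(e_{i}-h_{i}^{step})}\neq0$, pulled back slightly in time to lie in the allowed region. The paper phrases this via the one-parameter families $Z_{1},Z_{2}$ and the backward flow $\varphi_{-t}$, but the content is identical to your construction.
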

\begin{proof} Consider \((e_{1},e_{2})\in\mathcal{R}^{c}(h)\). Then,    the corresponding level sets in each d.o.f. include the step position, namely, \(q_i^{min}(e_{i})< q_{i}^{wall}<q_i^{max}(e_{i}),\ \ i=1,2\).  Denote hereafter the smooth Hamiltonian  flow by \(\varphi_{t}^{smooth}(z)\) where \(z=(q_1,q_2,p_1,p_2)\). The open, one dimensional set of i.c. \( Z_{1}=\{z|z=\left(q_1^{wall},q_2,-\sqrt{2(e_{1}-h_{1}^{step})},\pm\sqrt{2(e_{2}-V_{2}(q_2))}\right),q_2\in (q_2^{min}(e_{2}),q_{2}^{wall})\}\)
  is non-empty and belongs, by construction, to the level set \((e_{1},e_2)\). Its projection  to the configuration space belongs to the right, 1-boundary of the step. Hence, for sufficiently small \(t\), the set \(\varphi_{- t}( Z_{1})\) is within the allowed region of motion, belongs to the level set  \((e_{1},e_{2})\in\mathcal{R}^{c}(h)\), and consists of i.c. which impact at time \(t\) the 1-boundary of the step transversely, with horizontal velocity \(-\sqrt{2(e_{1}-h_{1}^{step})}\). Similarly, defining  \(Z_{2}=\{z|z=\left(q_1,q_2^{wall},\pm\sqrt{2(e_{1}-V_{1}(q_1))},-\sqrt{2(e_{2}-h_{2}^{step})}\right),q_1\in (q_1^{min}(e_{1}),q_{1}^{wall})\}\),  the set  \(\varphi_{- t}( Z_{2})\) is within the allowed region of motion for sufficiently small \(t\) and consists of i.c. belonging  to the level set  \((e_{1},e_{2})\in\mathcal{R}^{c}(h)\) which impact at time \(t\) the 2-boundary  of the step transversely, with vertical velocity \(-\sqrt{2(e_{2}-h_{2}^{step})}\).   \end{proof}

While, for most cases ("non-resonant"), each trajectory  belonging to level sets  \((e_{1},e_{2})\in\mathcal{R}^{c}(h)\) hits both boundaries of the step many times, in some resonant cases, it is possible to have families of trajectories belonging to level sets  \((e_{1},e_{2})\in\mathcal{R}^{c}(h)\) that  hit only one of the step boundaries or even avoid collisions (resonant trajectories belonging to the interval \(J_{K}\) of (\ref{eq:explicitiemap}) with \(K=0\), see section \ref{sec:returnmaps} for more details).

\textbf{Action angle coordinates and transverse sections: }

   The action angle coordinates of the 1 d.o.f. Hamiltonian, \(H_{i}(q_i,p_i)\),  \((I_{i},\theta_i(t)=\omega_i(I_i)t+\theta_i(0))\),  are uniquely defined up to a shift in the angle. Since, by our assumptions, \(H_i(I_i)=e_i\) is invertible,   \(e_i\) or \(I_i\) may be used to label level sets (to simplify notation, we hereafter consider the frequencies as functions of the energies, \(e_{i}\)).
 By the monotonicity of \(V_{i}(q_{i})|_{q_{i}\neq0}\), for all energy surfaces \(h=e_1+e_2>0,\)  each energy surface contains a family of invariant tori on which rotations occur, and its boundary consists of the two invariant circles that correspond to the normal modes - the oscillatory motion of only one oscillator with the other one at rest
 (\(e_{1}=0,e_2=h\) and  \(e_{1}=h,e_2=0)  \).

\begin{figure}
\begin{centering}
\includegraphics[scale=0.45]{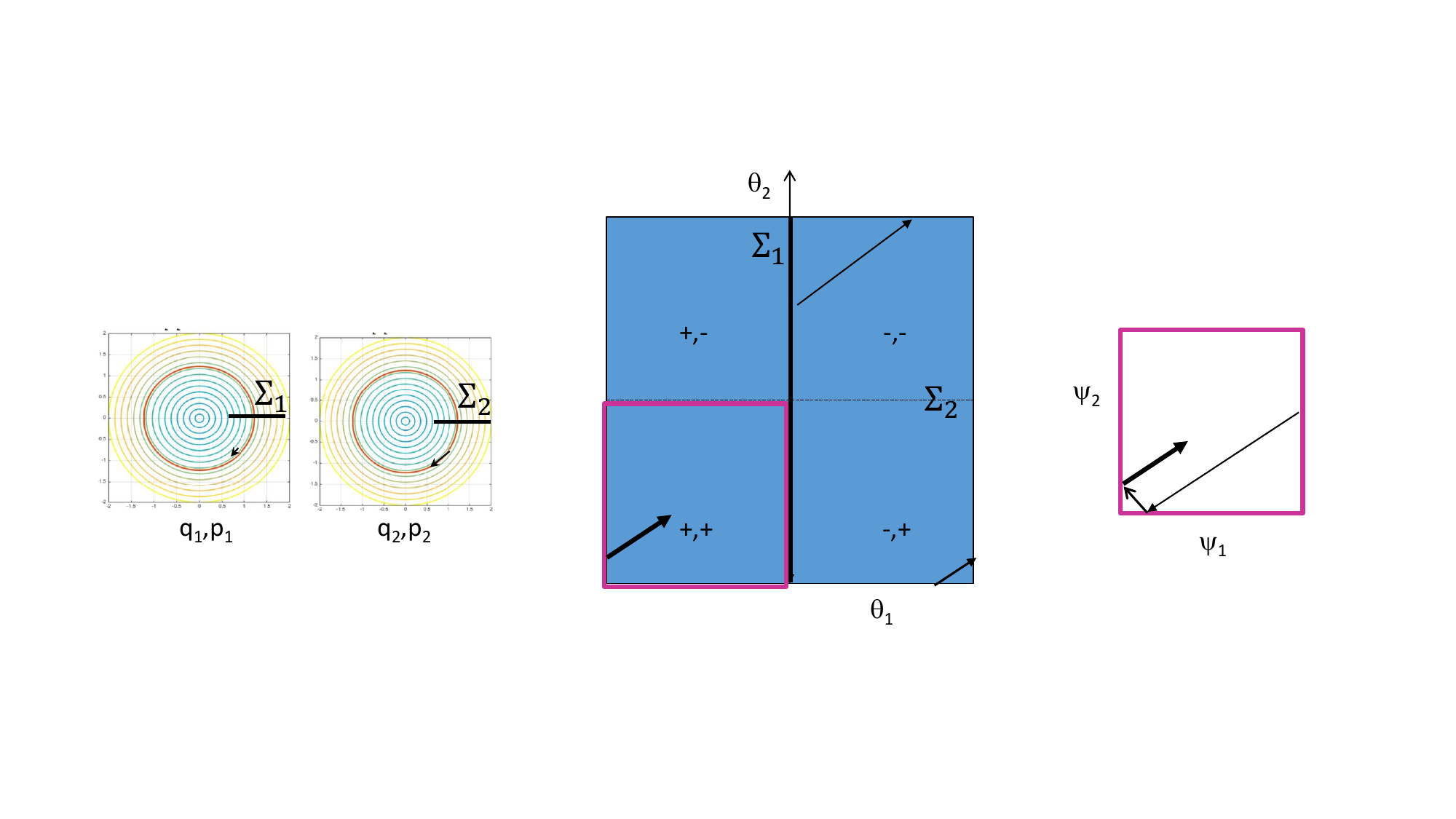}
\par\end{centering}
\protect\caption{\label{fig:signp1reg} Folding the smooth flow to a billiard: the motion on a level set is conjugated via action angle coordinated to the directional motion on the angles-torus. The motion is conjugated to the directional billiard motion on the left lower square. The direction of motion in this billiard is in the same quadrant as  the direction of motion in the configuration space (see Eq. (\ref{eq:signpsigntheta})).}
\end{figure}

For \(e_{i}>0\), denote by \(\Sigma_i\) the  three dimensional transverse section \(\{p_{i}=0,\dot p_{i}<0\},\)  and we set the phases of the action-angle coordinates to vanish  on these sections (so \(\theta_{i}=0\) (mod \(2\pi\)) on \(\Sigma_{i}\)):\begin{equation}
\Sigma_{i}: \{(q_{i},q_{\bar i},p_{i},p_{\bar i})|p_{i}=0, \dot p_{i}<0\}=\{(\theta_{i},\theta_{\bar i},I_{i},I_{\bar i})|\theta_{i}=0, I_i>0\}.\label{eq:defsigmai}
\end{equation}     By the symmetry of the mechanical Hamiltonian, with this choice of the phases, \(p_{i}(t)>0\) for \(\theta_{i}(t)\in(-\pi,0)\) (mod \(2\pi\)) and similarly \(p_{i}(t)<0\) for \(\theta_{i}(t)\in(0,\pi )\) (mod \(2\pi\)), namely \(
\text{sign}(p_{i}(t))=\text{sign}(\dot q_{i}(t))=-\text{sign}(\theta_{i}(t) \text{ mod }2\pi)\). For \(p_{i}\) which is bounded away from zero, the smooth flow is smoothly conjugate, through the action angle transformation, to the directional motion on the flat  torus in the direction \((\omega_1(e_1),\omega_2(e_2))\). The directed motion on the torus is  conjugated, by standard folding, to the directed billiard motion on the   square \((\psi_{1},\psi_2)\in[-\pi,0]\times[-\pi,0]\) (see figure \ref{fig:signp1reg}). For this specific folding and for the choice of the angle phase (\ref{eq:defsigmai}),  the direction of time is preserved along trajectories of the smooth flow and the billiard: \begin{equation}
\text{sign}(p_{i}(t))=\text{sign}(\dot q_{i}(t))=\text{sign}(\dot \psi_{i}(t))\label{eq:signpsigntheta}
\end{equation}
namely, the directed billiard in the square (hereafter called the \(\psi\)-billiard) and the smooth flow on the level set \((e_{1},e_2) \) are topologically conjugated, see Fig \ref{fig:signp1reg}. By reflections and time reversal, the flow is also conjugated to the billiard on the positive quadrant.

   We use the same construction of conjugacy  for the impact system.
 Let: \begin{equation}\label{def:sigmaplusminusqwall}
\Sigma_{i}^{\pm}=\{(q,p)|q_{i}=q_i^{wall},\pm p_{i}>0\},
\end{equation}
and let \(t_{\Sigma_{i}^{-}\rightarrow\Sigma_{i}^+}=T_{i}(e_{i}^{wall})-\tilde T_{i}(e_{i};q_i^{wall}),\,t_{\Sigma_{i}^{+}\rightarrow\Sigma_{i}}=t_{\Sigma_{i}\rightarrow\Sigma_{i}^-}
=\frac{1}{2}\tilde T_{i}(e_{i};q_i^{wall})\) denote the respective travel times between the sections.

\begin{lem}\label{lem:reflectionsangles} The sections \(\Sigma_{i}^{\pm}\)  are impacted/crossed transversely by the step-flow if and only if \(e_{i}>h_i^{step}\). For all i.c. belonging to a level set \(e_{i}>h_i^{step}\), with the angle coordinate convention (\ref{eq:defsigmai}), the angle \(\theta_{i}\)  at the section \(\Sigma_{i}^{-}\) is \(\theta_i^{wall}
(e_{i}) \):\begin{equation}\label{eq:thetaiwallexplicit}
\theta_i^{wall}
(e_{i};q_{i}^{wall})=\omega_i(e_i)t_{\Sigma_{i}\rightarrow\Sigma_{i}^-}= \omega_i(e_i)\int^{q_{i}^{max}(e_i)}_{q_i^{wall}} \frac{dq}{\sqrt{2(e_{i}-V_i(q_i))}}=\pi\frac{\tilde T_{i}(e_{i};q_i^{wall})}{T_{i}(e_{i};q_i^{wall})} ,
\end{equation}and a reflection from the step at \(q_i^{wall}\)    sends the angle  \(\theta_i^{wall}
(e_{i};q_{i}^{wall})\) to  \(2\pi-\theta_i^{wall}
(e_{i};q_{i}^{wall})=-\theta_i^{wall}
(e_{i};q_{i}^{wall}) \) mod \(2\pi  \). \end{lem}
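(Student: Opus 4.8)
The plan is to exploit the complete decoupling of the $i$-th degree of freedom together with the time-reversal symmetry of a one-degree-of-freedom mechanical oscillator, so that all three assertions reduce to elementary facts about the $H_i$ motion alone. First I would pin down the geometric meaning of the angle convention (\ref{eq:defsigmai}): on $\Sigma_i$ one has $p_i=0$ and $\dot p_i=-V_i'(q_i)<0$, so by concavity ($qV_i'(q)>0$) the only crossing point is $q_i=q_i^{max}(e_i)$, the right turning point. Thus $\theta_i=0$ is anchored at $q_i^{max}(e_i)$, and the trajectory leaves $\Sigma_i$ moving left ($p_i<0$), consistent with $sign(p_i)=-sign(\theta_i\bmod 2\pi)$.

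For the transversality claim I would use that concavity makes the level sets nested. Since $h_i^{step}=V_i(q_i^{wall})$, the wall position $q_i^{wall}$ is a turning point of the level set $e_i=h_i^{step}$; for $e_i>h_i^{step}$ nesting gives $q_i^{min}(e_i)<q_i^{wall}<q_i^{max}(e_i)$ irrespective of the sign of $q_i^{wall}$, so the trajectory passes through $q_i=q_i^{wall}$ with $p_i=\pm\sqrt{2(e_i-h_i^{step})}\neq0$, i.e. $\dot q_i\neq0$, which is a transverse crossing of $\Sigma_i^{\pm}$. For $e_i\le h_i^{step}$ the point $q_i^{wall}$ lies at or outside the oscillation interval, so the crossing is either tangential ($p_i=0$) or does not occur, establishing the ``if and only if''.

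For the angle formula I would integrate the travel time from $\Sigma_i$ to $\Sigma_i^-$ directly. Leaving $q_i^{max}(e_i)$ to the left, the particle reaches $q_i=q_i^{wall}$ still with $p_i<0$ (it turns only at $q_i^{min}(e_i)<q_i^{wall}$), so $t_{\Sigma_i\to\Sigma_i^-}=\int_{q_i^{wall}}^{q_i^{max}(e_i)} dq/\sqrt{2(e_i-V_i(q))}=\half\,\tilde T_i(e_i;q_i^{wall})$. Multiplying by $\omega_i(e_i)=2\pi/T_i(e_i)$ yields exactly $\theta_i^{wall}(e_i;q_i^{wall})=\pi\,\tilde T_i/T_i$, matching (\ref{eq:thetaiwalldef}); since $q_i^{wall}>q_i^{min}(e_i)$ gives $\tilde T_i<T_i$, this angle lies in $(0,\pi)$, consistent with $\Sigma_i^-$ sitting in the $p_i<0$ half. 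I would stress that this value depends only on $(e_i,q_i^{wall})$ and not on $\theta_{\bar i}$, precisely because the $i$-th oscillator is uncoupled, which is why it holds for \emph{all} i.c.\ on the level set.

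The remaining claim is the one where I expect the only real (and still mild) subtlety: identifying the action of the impact $p_i\to -p_i$ in angle coordinates. I would invoke time-reversal symmetry of $H_i$: if $(q_i(t),p_i(t))$ solves the $i$-th Hamilton equations with $\theta_i(0)=0$ at the turning point $(q_i^{max},0)$, then $(q_i(-t),-p_i(-t))$ also solves them and shares the same initial datum, so uniqueness forces $q_i(-t)=q_i(t)$ and $p_i(-t)=-p_i(t)$; with $\theta_i=\omega_i t$ this means $q_i(\theta_i)$ is even and $p_i(\theta_i)$ odd. Hence the involution $p_i\mapsto -p_i$ at fixed $q_i$ is precisely $\theta_i\mapsto -\theta_i \pmod{2\pi}$. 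Applying it to the pre-impact state on $\Sigma_i^-$ (angle $\theta_i^{wall}$, $p_i<0$) produces the post-impact state on $\Sigma_i^+$ with angle $-\theta_i^{wall}=2\pi-\theta_i^{wall}\pmod{2\pi}$, which is exactly the asserted reflection law.
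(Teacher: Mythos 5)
Your proposal is correct and follows essentially the same route as the paper's proof: nestedness of the level sets of the concave potential for the transversality dichotomy, the travel-time integral from the turning point $q_i^{\max}(e_i)$ together with the phase convention (\ref{eq:defsigmai}) for the formula $\theta_i^{wall}=\pi\tilde T_i/T_i$, and the $p_i\to-p_i$ (time-reversal) symmetry of the mechanical Hamiltonian for the reflection rule $\theta_i^{wall}\mapsto-\theta_i^{wall}$ mod $2\pi$. You merely spell out in detail the steps the paper compresses into single sentences; no gaps.
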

\begin{proof}
Since the level sets of \(H_{i}\) are nested,   for   \(e_{i}<h_i^{step}\) the \(e_{i}\) level set is strictly interior to the \(h_i^{step}\) level set, and hence the sections \(\Sigma_{i}^{\pm}\)  are not reached by the flow. Conversely, for \(e_{i}>h_i^{step}\), the   sections \(\Sigma_{i}^{\pm}\)  are crossed by the level set, and, by the mechanical form of the Hamiltonian \(H_{i}\), on these sections  \(p_i^2=2(e_i-V(q_i^{wall}))>0\) so they are crossed transversely. The formula for \(\theta_i^{wall}
(e_{i};q_{i}^{wall}) \)  follows from the definition of action-angle coordinates and the convention (\ref{eq:defsigmai}). By the symmetry \(p_{i}\rightarrow-p_i\) of mechanical Hamiltonian function it follows that the reflection from the step at \(q_i^{wall}\)   sends the wall angle coordinate \(\theta_i^{wall}
\) to \(2\pi-\theta_i^{wall}
(e_{i})=-\theta_i^{wall}
(e_{i}) \) mod \(2\pi  \).    \end{proof}
 Notice that, as summarized in Table \ref{tab:thetat12},\begin{equation}\label{eq:thetailimits}
\lim_{e_{i}\searrow h_i^{step}}\theta_i^{wall}
(e_{i};q_{i}^{wall})=\begin{cases}\pi  & \text{for }q_i^{wall}<0 \\
0   & \text{for }q_i^{wall}>0. \\
\end{cases}
\end{equation}
and
\begin{equation}\label{eq:thetailimitinfinity}
\lim_{e_{i}\rightarrow\infty}\theta_i^{wall}
(e_{i};q_{i}^{wall})=\theta_i^{wall,\infty},
\end{equation}
where, for symmetric potentials,
\(\theta_i^{wall,\infty}=\frac{\pi }{2}\).

Combining  the classification of level sets according to their impacts with the boundaries (lemmas \ref{lemm:r12c}-\ref{lem:rcbothsides}) with the action-angle representation of the flow and the impacts on a given level set (lemma \ref{lem:reflectionsangles}), we  establish  the topological conjugacy between the impact flow on a given level set and its corresponding flat surface and billiard table.
To this aim,  it is convenient to define:

 \begin{equation}
\hat \theta_{ i}^{wall}(e_{i},e_{\bar i};q_{i}^{wall},q_{\bar i}^{wall})=\begin{cases}
\emptyset\ & \text{if }q_{1,2}^{wall}>0\wedge e_{ 1,2}<h_{ 1,2}^{step} \\
\theta_{ i}^{wall}(e_{ i};q_{ i}^{wall}) & \text{if }   e_{ i}\geq h_{ i}^{step}\wedge(e_{\bar i}\geq h_{\bar i}^{step}\vee q_{\bar i}^{wall}>0)\\
\pi \ & \text{otherwise. }
\end{cases}
\label{eq:thetahatforall}\end{equation}By lemmas \ref{lemm:r12c}-\ref{lem:rcbothsides}, \(\hat \theta_{ i}^{wall}(e_{i},e_{\bar i};q_{i}^{wall},q_{\bar i}^{wall})=\theta_{ i}^{wall}(e_{ i};q_{ i}^{wall})\) for level sets for which impacts
(transverse or tangent) with the \(i\)-boundary are allowed,  \(\hat \theta_{ i}^{wall}(e_{i},e_{\bar i};q_{i}^{wall},q_{\bar i}^{wall})=\emptyset\)  for level sets that are not in the allowed region of motion, and \(\hat \theta_{ i}^{wall}(e_{i},e_{\bar i};q_{i}^{wall},q_{\bar i}^{wall})=\pi\) for level sets in which impacts with the \(i\)-th boundary cannot occur.

\

\textbf{Rotational dynamics for level sets in \(\mathcal{\bar R}^{i}(h)\):}
\begin{lem}\label{lem:ridyntorus}
{  For level sets \((e_{1},e_2)\) in \(\mathcal{\bar R}^{i}(h)\) the step-dynamics are smoothly conjugate to the directional motion \((\omega_i(e_i),\omega _{\bar i}(e_{\bar i})) \) on the  torus \begin{equation}
\mathbb{T}_{i}(e_{1},e_{2})=\{(\theta_{ i},\theta_{\bar i})|\theta_{ i}\in[-\pi,\pi),\theta_{\bar i}\in[-\hat \theta_{\bar i}^{wall},\hat \theta_{\bar i}^{wall})\},
\label{eq:striptorus}\end{equation}
with \(\hat \theta_{\bar i}\) defined by (\ref{eq:thetahatforall}). This step-dynamics are also conjugated to the \((\pm\omega_i(e_i),\pm\omega _{\bar i}(e_{\bar i})) \) directional billiard motion   on the rectangular billiard  \((\psi_{ i},\psi_{\bar i})\in[-\pi,0]\times[-\hat \theta_{\bar i}^{wall},0]\).   In particular, the conjugation keeps the direction of motion: the signs of \(\dot \psi_{1,2}\) and the sign of \(\dot q_{1,2}\) coincide.  }     \end{lem}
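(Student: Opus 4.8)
The plan is to reduce the two--degree--of--freedom impact flow on these level sets to a product of two one--degree--of--freedom flows and then conjugate each factor separately. The key point I would establish first is that on level sets in $\mathcal{\bar R}^{i}(h)$ the $i$--coordinate never crosses the line $q_i=q_i^{wall}$: by Lemma \ref{lemm:r12c} such level sets satisfy $e_i\le h_i^{step}$, so by nesting of the level sets either $q_i^{max}(e_i)<q_i^{wall}$ (when $q_i^{wall}>0$) or $q_i^{min}(e_i)>q_i^{wall}$ (when $q_i^{wall}<0$); in both cases $\mathrm{sign}(q_i(t)-q_i^{wall})$ is constant along every trajectory. Hence the corner of the step is never involved and the defining condition $q_i<q_i^{wall}$ of the $\bar i$--boundary is either always or never met, independently of $\theta_i$. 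This is exactly what makes the step act, on these level sets, either as a single straight wall $q_{\bar i}=q_{\bar i}^{wall}$ in the $\bar i$--direction or as no wall at all, so that the dynamics decouples into independent $i$-- and $\bar i$--motions; the resulting trichotomy is precisely the one encoded in the definition (\ref{eq:thetahatforall}) of $\hat\theta_{\bar i}^{wall}$ via Lemmas \ref{lemm:r12c} and \ref{lem:ridyn}.

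Next I would conjugate each factor. The $i$--factor has no impacts, so the smooth action--angle map carries it to uniform rotation with frequency $\omega_i$ on the full circle $\theta_i\in[-\pi,\pi)$. For the $\bar i$--factor there are the two sub--cases of (\ref{eq:thetahatforall}). When no impact with the $\bar i$--boundary occurs, $\hat\theta_{\bar i}^{wall}=\pi$ and the action--angle map already yields uniform rotation on $\theta_{\bar i}\in[-\pi,\pi)$. When impacts do occur, $\hat\theta_{\bar i}^{wall}=\theta_{\bar i}^{wall}(e_{\bar i})<\pi$, the orbit is confined to $q_{\bar i}\ge q_{\bar i}^{wall}$, i.e.\ to $\theta_{\bar i}\in[-\theta_{\bar i}^{wall},\theta_{\bar i}^{wall}]$, and by Lemma \ref{lem:reflectionsangles} a reflection instantaneously sends $\theta_{\bar i}^{wall}$ to $-\theta_{\bar i}^{wall}$. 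Since these two endpoints are thereby identified, the reduced angle space is the circle $\theta_{\bar i}\in[-\hat\theta_{\bar i}^{wall},\hat\theta_{\bar i}^{wall})$ of circumference $2\hat\theta_{\bar i}^{wall}$, on which the impact flow becomes the uniform flow $\dot\theta_{\bar i}=\omega_{\bar i}$ with the reflection absorbed into the flat identification. Taking the product of the two circles gives the flat torus $\mathbb{T}_i(e_1,e_2)$ of (\ref{eq:striptorus}) carrying the directional flow $(\omega_i,\omega_{\bar i})$, and the conjugating map is the smooth action--angle transformation on each factor.

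Finally I would fold the torus to the rectangular billiard exactly as in the conjugacy already performed for the full torus (Fig.~\ref{fig:signp1reg}): the standard $2$--to--$1$ folding $\theta\mapsto-|\theta|$ applied in each coordinate collapses $\mathbb{T}_i(e_1,e_2)$ onto the rectangle $[-\pi,0]\times[-\hat\theta_{\bar i}^{wall},0]$ and turns the wrap--around of the uniform torus flow into specular reflection at the sides, so the flow becomes the $(\pm\omega_i,\pm\omega_{\bar i})$--directional billiard. Because this folding preserves the sign structure recorded in (\ref{eq:signpsigntheta}), I would conclude $\mathrm{sign}(\dot\psi_{1,2})=\mathrm{sign}(\dot q_{1,2})$, which is the asserted preservation of the direction of motion.

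The step I expect to be the main obstacle is the rigorous justification of the decoupling at the closed endpoints of $\mathcal{\bar R}^{i}(h)$. At $e_i=h_i^{step}$ with $q_i^{wall}>0$ the $i$--oscillation becomes tangent to the step at its turning point, so $q_i(t)-q_i^{wall}$ vanishes at isolated instants and the corner is grazed; I would need to verify that this tangency concerns only a measure--zero set of initial data and does not break the decoupling, so that the conjugacy extends continuously to the boundary of $\mathcal{\bar R}^{i}(h)$. Checking that the reflection gluing of Lemma \ref{lem:reflectionsangles} matches the orientation of the uniform torus flow, and that the folded flow is genuinely specular, is then routine given the one--dimensional reduction.
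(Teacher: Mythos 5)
Your proposal is correct and follows essentially the same route as the paper's proof: the same trichotomy from Lemma \ref{lem:ridyn} matched against the cases of (\ref{eq:thetahatforall}), the same use of Lemma \ref{lem:reflectionsangles} to glue $\theta_{\bar i}=\pm\theta_{\bar i}^{wall}$ into a shorter circle in the $\bar i$--factor, and the same folding to the rectangle with the sign convention (\ref{eq:signpsigntheta}). Your closing remark about grazing tangency at the endpoints of $\mathcal{\bar R}^{i}(h)$ is a fair point that the paper's proof (which is phrased for the open set $\mathcal{R}^{i}(h)$) glosses over, but it does not change the argument.
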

\begin{proof}
By lemma \ref{lem:ridyn} the motion on level sets in  \(\mathcal{R}^{i}(h)\) is either: a) not defined (so   \(\hat \theta_{\bar i}^{wall}=\emptyset\)), b) corresponds to reflections only from the  \(\bar i\)- boundary of the step, or, c) the trajectory does not touch the step, so the motion occurs as in the non-impact case on the torus  (\ref{eq:striptorus}) with     \(\hat \theta_{\bar i}^{wall}=\pi\).

The three rows of conditions in the definition (\ref{eq:thetahatforall}) of      \(\hat \theta_{\bar i}^{wall}\) for \(e_{ i}<h_{ i}^{step}\) coincide with the conditions listed for cases a,b,c in  lemma \ref{lem:ridyn}, so, to complete the proof we only need to show that case b) indeed corresponds to the rotation on the clipped torus  (\ref{eq:striptorus}) with     \(\hat \theta_{\bar i}^{wall}=\theta_{\bar i}^{wall}\).    Indeed, by the mechanical form of \(H_{\bar i}\),   reflections only from the  \(\bar i\)- boundary of the step imply that  the corresponding angle coordinate is restricted to the interval   \(\theta_{\bar i}(t)\in[-\theta_{\bar i}^{wall}(e_{\bar i};q_{\bar i}^{wall}),\theta_{\bar i}^{wall}(e_{\bar i};q_{\bar i}^{wall})]  \), where,  by lemma \ref{lem:reflectionsangles}, the transverse impacts  correspond to gluing the transverse section  \(\Sigma_{\bar i}^{\pm}|_{H_{\bar i}=e_{\bar i}}\):    \begin{equation}
\Sigma_{i}^{-}|_{H_i=e_i}=\{(\theta,I)| I_i= I_i(e_i),\theta_{i}=\theta_i^{wall}
(e_{i})\}, \:\:\Sigma_{i}^{+}|_{H_i=e_i}=\{(\theta,I)| I_i= I_i(e_i),\theta_{i}=-\theta_i^{wall}
(e_{i})\}.\end{equation}  by identifying the angles \(\theta_{\bar i}^{wall}(e_{\bar i};q_{\bar i}^{wall})\) and \(-\theta_{\bar i}^{wall}(e_{\bar i};q_{\bar i}^{wall})  \). Namely, we obtain a directional motion on the torus (\ref{eq:striptorus}),  in the direction \((\omega_i(e_i),\omega_{\bar i}(e_{\bar i})) \). By folding to the rectangle \((\psi_{ i},\psi_{\bar i})\in[-\pi,0]\times[-\hat \theta_{\bar i}^{wall},0]\), the motion is conjugated to the \(\psi\)-billiard in this rectangular billiard, and (\ref{eq:signpsigntheta}) holds for the impact flow as well, proving the lemma for this case as well, see tables IIA,IIIIA,IIID,IIIID of Fig. \ref{fig:lshapesall}.
 \end{proof}

\begin{figure}
\begin{centering}
\includegraphics[scale=0.6]{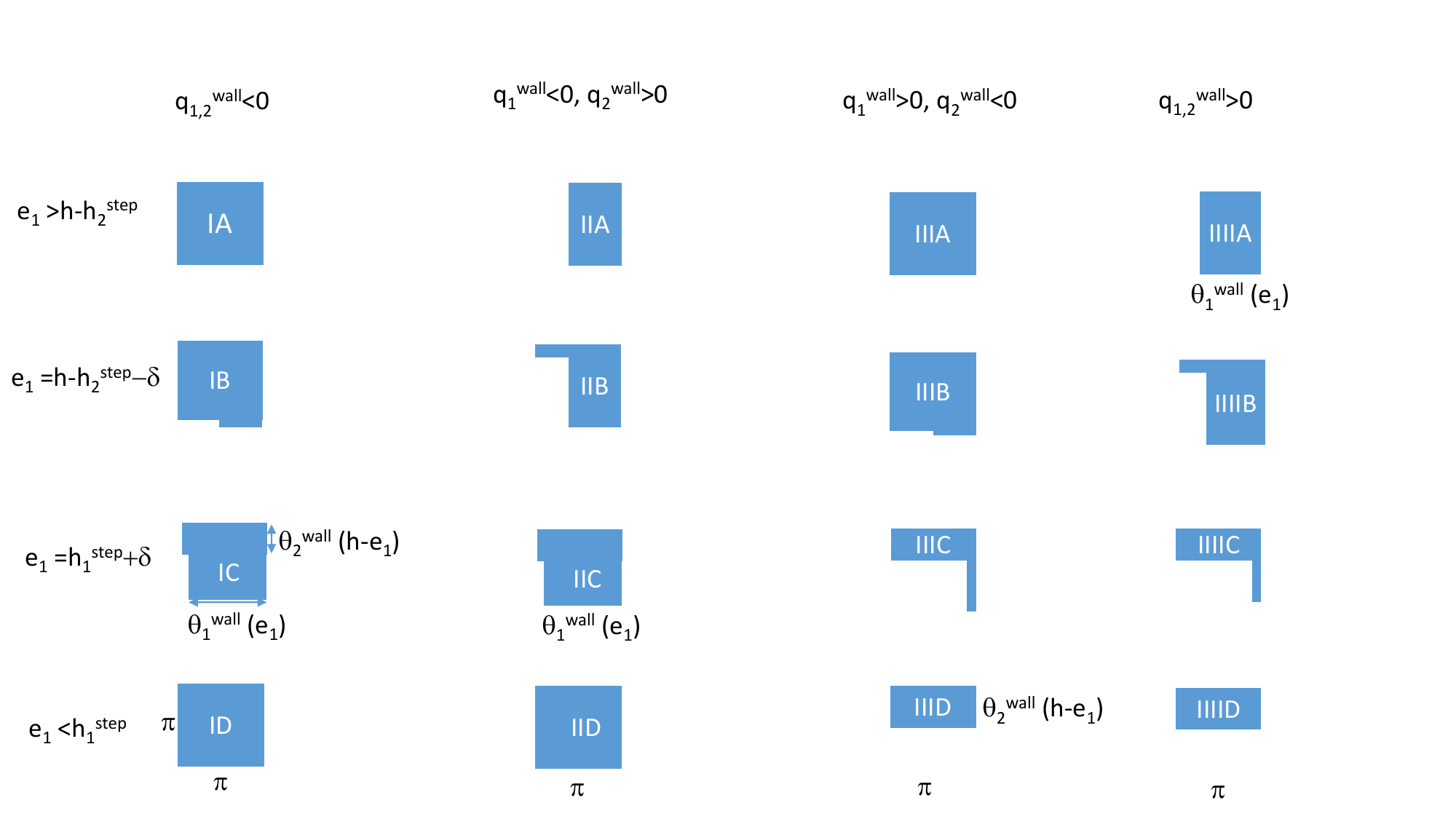}
\par\end{centering}
\protect\caption{\label{fig:lshapesall} The iso-energy   billiard geometry at the different step locations for \(h>h^{step}\). The first and last rows present, respectively, the rectangular billiards for level sets in \(\mathcal{R}^{2}(h)\) and  \(\mathcal{R}^{1}(h)\). The second and third rows present, respectively, the L-shaped billiards in  \(\mathcal{R}^{c}(h)\) just below and just above the edges of the  \(\mathcal{R}^{c}(h)\) interval  (so \(\delta>0\) is  small). }
\end{figure}

\textbf{The flow in the region \(\mathcal{R}^{c}(h)\)  is conjugated to the L-shaped billiard flow:}

\begin{figure}
\begin{centering}
\includegraphics[scale=0.4]{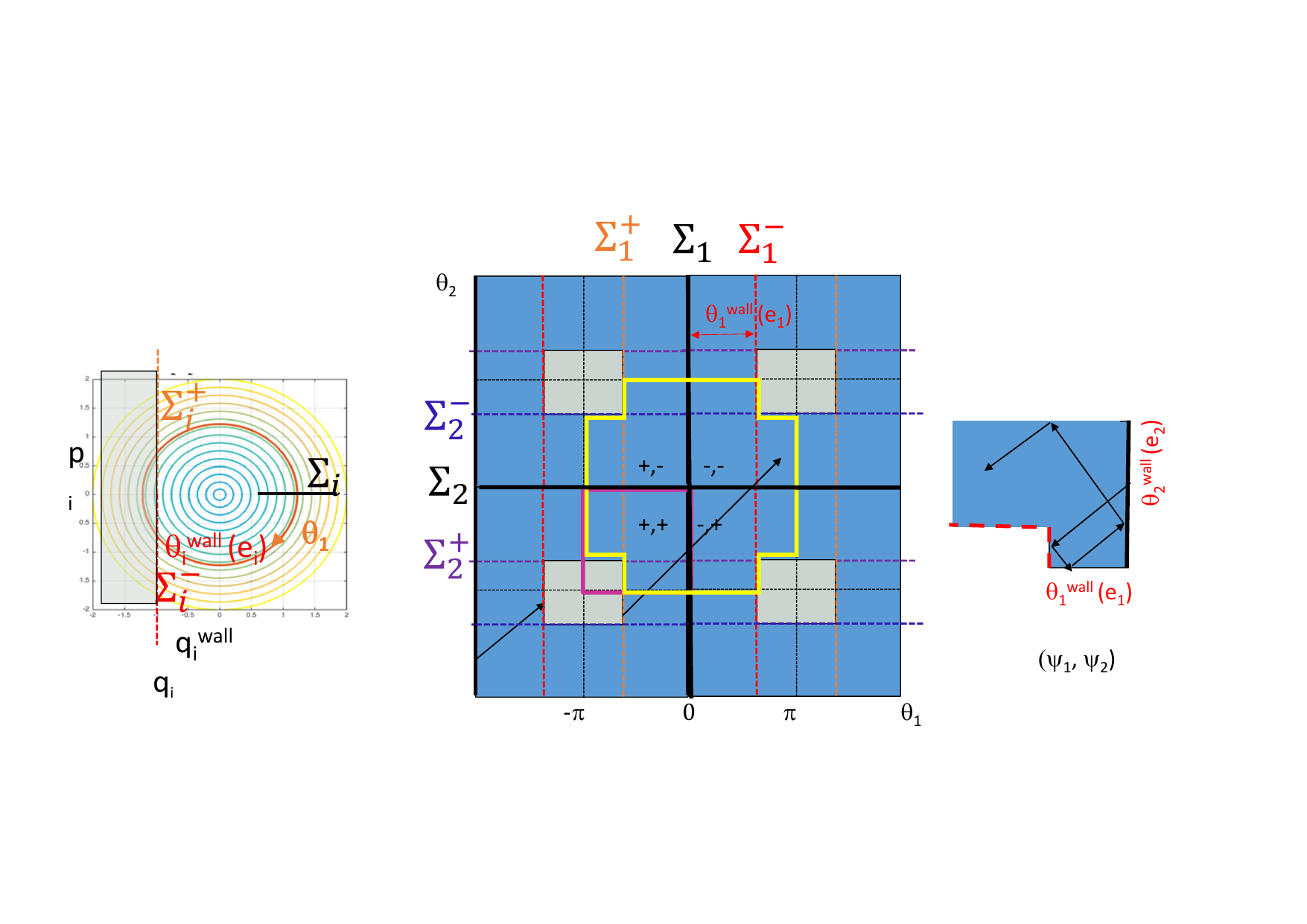}
\par\end{centering}
\protect\caption{\label{fig:cross} The step return map, the swiss cross surface and the rotated L-shaped billiard geometry for level sets in the step region, \(\mathcal{R}^{c}(h)\). The grey areas correspond to the step region in the angles space. The yellow outlines the boundary of SW, the Swiss-cross flat surface for which opposite parallel sides are glued.} \end{figure}

\begin{lem}\label{lem:lshaped}
{  For level sets \((e_{1},e_2)\) in \(\mathcal{R}^{c}(h)\) the step-dynamics are  conjugate to the directional motion \((\omega_1(e_1),\omega _{2}(e_{2})) \) on SW - the  swiss-cross-shaped \((\theta_{1},\theta_{2})\)-surface with vertical arms of width  \(2\theta_{1}^{wall}(e_{1})\)  and length \(2\pi,\)  horizontal arms of height  \(2\theta_{2}^{wall}(e_{2})\) and width  \(2\pi\) and the flat surface is achieved by gluing of parallel opposite sides. This step-dynamics are also conjugate to the \((\pm\omega_i(e_i),\pm\omega _{\bar i}(e_{\bar i})) \) directional billiard motion   on the L-shaped billiard    \(L(\pi,\pi,\theta_{1}^{wall}(e_{1};q_{1}^{wall}),\theta_{2}^{wall}(h-e_{1};q_{2}^{wall}))\). Reflecting the L-shaped billiard  with respect to the \(\theta_1\)-axis and the \(\theta_2\)-axis provides dynamics with conjugation that keeps the direction of motion.      } \end{lem}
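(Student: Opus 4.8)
The plan is to lift the configuration-space step constraint of (\ref{eq:stepdef}) to the action--angle torus, recognise the resulting admissible region as the swiss cross \(SW\), reinterpret the momentum-reversing impacts as side identifications of \(SW\), and finally fold \(SW\) by its two reflection symmetries to produce the L-shaped billiard. Throughout, \((e_1,e_2)\in\mathcal{R}^c(h)\), so by Lemma \ref{lem:rcbothsides} both walls are transversally impacted and Lemma \ref{lem:reflectionsangles} applies to each d.o.f.

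First I would locate the step boundaries in angle coordinates. By Lemma \ref{lem:reflectionsangles}, the \(i\)-wall \(\{q_i=q_i^{wall}\}\) is met exactly at \(\theta_i=\pm\theta_i^{wall}(e_i)\), with \(\Sigma_i^-\) at \(\theta_i=\theta_i^{wall}\) and \(\Sigma_i^+\) at \(\theta_i=-\theta_i^{wall}\). Tracking the sign conventions (\ref{eq:defsigmai})--(\ref{eq:signpsigntheta}), the half-plane \(\{q_i<q_i^{wall}\}\) is the angle set \(\{|\theta_i|>\theta_i^{wall}\}\), so the forbidden step \(S\) lifts to the four corner rectangles \(\{|\theta_1|>\theta_1^{wall}\}\cap\{|\theta_2|>\theta_2^{wall}\}\) of the torus \([-\pi,\pi)^2\). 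Excising these four corners leaves precisely \(SW\), with the claimed vertical arm (\(|\theta_1|\le\theta_1^{wall}\), length \(2\pi\)) and horizontal arm (\(|\theta_2|\le\theta_2^{wall}\), width \(2\pi\)).

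Next I would show the impact rule is exactly the gluing that makes \(SW\) a translation surface carrying the straight-line flow of slope \((\omega_1,\omega_2)\). The outer sides \(\theta_i=\pm\pi\) retain the original torus identification, while each newly exposed edge \(\theta_i=\theta_i^{wall}\) (with \(|\theta_{\bar i}|>\theta_{\bar i}^{wall}\)) is glued to \(\theta_i=-\theta_i^{wall}\) at the same \(\theta_{\bar i}\); by Lemma \ref{lem:reflectionsangles} this is exactly the angle image of \(p_i\to-p_i\). The point requiring care is the conditional nature of the impact: a trajectory reaching \(|\theta_i|=\theta_i^{wall}\) with \(|\theta_{\bar i}|\le\theta_{\bar i}^{wall}\) lies in the \(\bar i\)-arm, where \(q_{\bar i}>q_{\bar i}^{wall}\), strikes no wall, and crosses freely, consistent with \(SW\) being connected across its central block; the same edge with \(|\theta_{\bar i}|>\theta_{\bar i}^{wall}\) does impact and is teleported by the gluing. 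Since \(\dot\theta_i=\omega_i>0\) throughout, the conjugated motion is a genuine constant-direction flow preserving time orientation by (\ref{eq:signpsigntheta}); trajectories hitting the step corner map to the vertices \((\pm\theta_1^{wall},\pm\theta_2^{wall})\) where the flow is undefined, a measure-zero set matching the convention that such orbits stop.

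Finally I would fold. The involutions \(\theta_i\to-\theta_i\) are symmetries of \(SW\) and, being the angle image of the reflections \(p_i\to-p_i\) of the mechanical \(H_i\), are symmetries of the flow; quotienting by the group they generate -- the same folding used for the rectangle preceding Lemma \ref{lem:ridyntorus} -- sends the straight-line flow to the directional billiard flow on the fundamental domain \(\{\theta_1,\theta_2\ge0\}\cap SW=[0,\pi]^2\setminus[\theta_1^{wall},\pi]\times[\theta_2^{wall},\pi]\), which is \(L(\pi,\pi,\theta_1^{wall}(e_1),\theta_2^{wall}(h-e_1))\). Its outer edges \(\theta_i\in\{0,\pi\}\) reproduce the oscillator turning points and its inner edges \(\theta_i=\theta_i^{wall}\) reproduce the step impacts, so the billiard direction is \((\omega_1,\omega_2)\) and, as in Lemma \ref{lem:ridyntorus}, (\ref{eq:signpsigntheta}) persists; reflecting the L about the two axes restores the four copies on which the conjugacy preserves rather than folds the direction of motion. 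I expect the main obstacle to be the bookkeeping in the third paragraph: verifying that the piecewise rule ``reflect iff the complementary coordinate is beyond its wall'' is reproduced globally, without gaps or double identifications, by a single coherent set of side pairings of the excised torus, and that these four exposed edges pair up with matching orientation so that \(SW\) is a translation (not merely half-translation) surface.
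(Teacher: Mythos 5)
Your proposal is correct and follows essentially the same route as the paper's proof: lifting the step to a rectangular excision of the angle torus via Lemma \ref{lem:reflectionsangles}, identifying the impact rule $p_i\to -p_i$ with the translation gluing $\theta_i^{wall}\to-\theta_i^{wall}$ on the swiss cross, and folding by the reflections $\theta_i\to-\theta_i$ onto the L-shaped fundamental domain. Your added care about the conditional impacts (free crossing through the central block) and the orientation of the side pairings only makes explicit what the paper leaves implicit.
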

\begin{proof}
Recall that with the convention (\ref{eq:defsigmai}),  \(q_i(t;e_{i})>q_i^{wall}\) iff the angle coordinate of the smooth flow is in the interval  \((-\theta_{i}^{wall}(e_{i};q_i^{wall}),\theta_{i}^{wall}(e_{i};q_i^{wall}))\).  Hence, on a level set  \((e_{1},e_2)\in\mathcal{R}^{c}(h)\), the disallowed step region in the configuration space is mapped by the smooth action-angle transformation to a disallowed rectangular region in the angle variables: \((\theta_{1},\theta_{2})\in S_{\theta(e_{1},e_2)}:=[\theta_{1}^{wall}(e_{1};q_1^{wall}),2\pi-\theta_{1}^{wall}(e_{1};q_1^{wall})]\times[\theta_{2}^{wall}(e_{2};q_2^{wall}),2\pi-\theta_{2}^{wall}(e_{2};q_2^{wall})] \) all taken mod \(2\pi\). This rectangle cuts the four corners of the fundamental domain creating a swiss cross surface (see Fig. \ref{fig:cross}).
 By lemma \ref{lem:reflectionsangles},  the reflection  rule at impact, \(p_i\rightarrow-p_i\), translates to \(\theta_{i}^{wall} \rightarrow2\pi-\theta_{i}^{wall} \). Hence, the resulting flow under the step dynamics, expressed in the smooth action angle coordinates, corresponds to setting the action values to constants, \(I_{i}(e_{i})\), and letting the angles \((\theta_{1},\theta_2)\) increase linearly at constant speeds \((\omega_1(e_1),\omega_2(e_2))\) on the  torus \([0,2\pi]\times[0,2\pi]\), till the rectangle \(S_{\theta(e_{1},e_2)}\) is met. There, the gluing condition   \(\theta_{i}^{wall}(e_{i};q_i^{wall})\rightarrow2\pi-\theta_{i}^{wall}\) is  applied. This is a directed flow on a "torus with a rectangular hole" namely, a compact  orientable surface of genus 2. Equivalently, when shifting the torus center by \((-\pi,-\pi)\), this is a directed flow on a swiss-cross surface, see Figure \ref{fig:cross}. For all   \((e_{1},e_2=h-e_{1})\in\mathcal{R}^{c}(h)\), the dynamics under this gluing rule of the swiss-cross correspond to an unfolding of a billiard motion in the \(\mathbf{B}(e_{1})=L(\pi,\pi,\theta_{1}^{wall}(e_{1};q_1^{wall}),\theta_{2}^{wall}(h-e_{1};q_2^{wall}))\)-shaped table  \cite{Athreya2012,Zorich2006} in the directions \((\pm\omega_1(e_1),\pm\omega_2(h-e_1))\), where, as before, by the choice (\ref{eq:defsigmai}) of the angle phases, (\ref{eq:signpsigntheta}) holds on the L-shaped billiard that is folded onto the  low-left part of the swiss cross (see Fig \ref{fig:cross} and Fig. \ref{fig:Billards/Regular}). Thus, we have shown that the dynamics on the iso-energetic level sets in   \(\mathcal{R}^{c}(h)\) are conjugated to the family of \(\alpha-\)directional flows on the family of L-shaped billiards,  \(\mathcal{B}(h)=\{\alpha (e_1)=\frac{\omega_2(h-e_1)}{\omega_1(e_1)},\mathbf{B}(e_{1})\}|_{e_{1}\in\mathcal{I}^{c}(h)}\), where
 \(\mathcal{I}^{c}(h):=(h_{1}^{step}, h- h_{2} ^{step}=h_{1}^{step}+h-h^{step})\).
\end{proof}

\begin{figure}
    \centering
    \includegraphics[scale = 0.3]{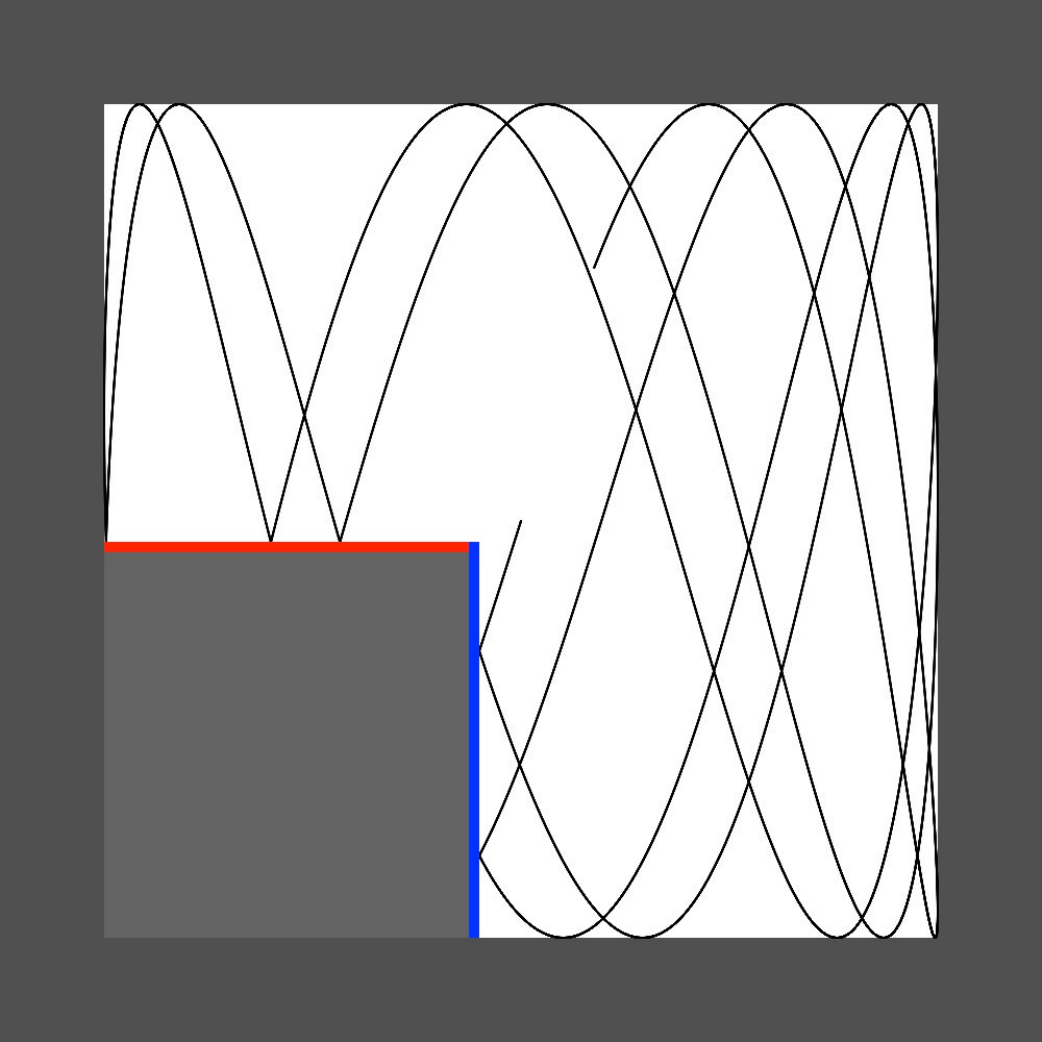}\qquad \qquad \qquad
    \includegraphics[scale = 0.3]{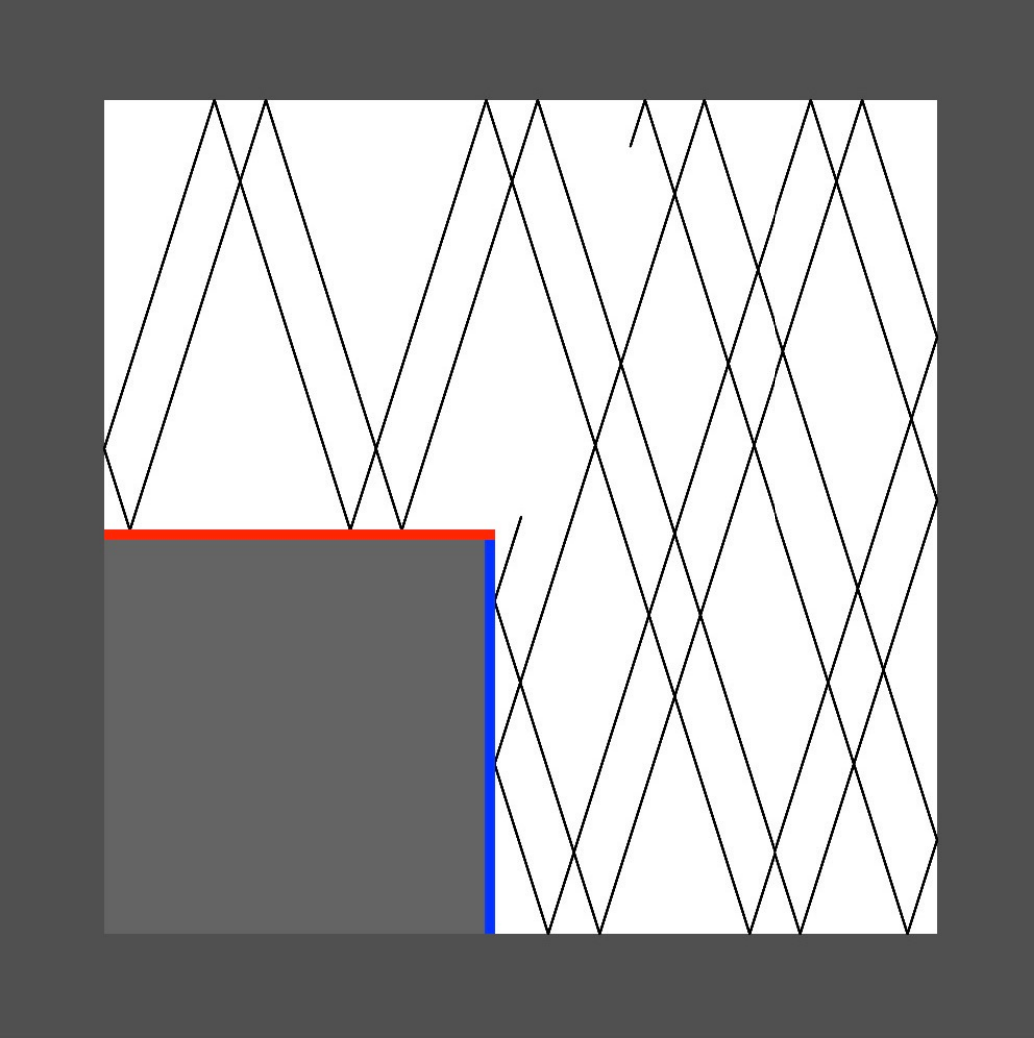}
    \caption{A simulation of the configuration space of the linear oscillators step system (left) with its corresponding matching L-shaped billiard in the angle space (right). The turning points of the flow, where \(p_{i}=0\), are mapped to reflections from the outer square boundaries and the elastic reflections of the flow from the step are mapped to the billiard reflections from the step.}
    \label{fig:Billards/Regular}
\end{figure}

 Finally, to complete the proof of Theorem \ref{thm:mainflow}, we notice that  since the directed flow  on a genus-2 orientable compact surface is not conjugate to a flow on a torus, and since by lemma \ref{lem:lshaped}  the motion on the level sets \((e_{1},h-e_2)\) for all \({e_{1}\in\mathcal{I}^{c}(h)}\)  is conjugated to such a flow, the step system is not LIHIS. The measure of the corresponding set is positive as the intersection of each level set in  \(\mathcal{R}^{c}(h)\) with the allowed region of motion has positive area and   \(|\mathcal{I}^{c}(h)|=h-h^{step}>0\).  By Lemmas \ref{lem:ridyntorus} the motion on the iso-energy level sets   \((e_{1},h-e_{1})\) with  \(e_{1}\in (0,h_1^{step})\cup( h- h_{2} ^{step},h)\), the iso-energy complement to \(\mathcal{R}^{c}(h)\), is conjugate to the directed flow on a torus, and this complement also has positive measure since, for \(h>h^{step}\), the intersection of these level sets with the allowed region of motion is always of positive measure.

Each column of Fig. \ref{fig:lshapesall} shows  schematically the family of iso-energetic billiard tables obtained for  the indicated positions of the step.
 The directional  L-shaped billiard families,  \(\mathcal{B}(h)\), are shown in rows B and C and correspond to level sets in \(\mathcal{R}^{c}(h)\). The widths of the arms of L-shaped tables at the edges of the segment  \(\mathcal{R}^{c}(h)\) (these depend on the signs of \(q_{1,2}^{wall}\)) are listed in Table \ref{tab:thetat12} - note that they are distinct, namely, for all \(h>h^{step}\), \(\theta_{i}^{wall}(h_{i}^{step})\neq\theta_{i}^{wall}(h-h_{\bar i}^{step})\).  The rectangular billiards shown in rows A and D correspond to level sets in  \(\mathcal{R}^{1}(h)\) and   \(\mathcal{R}^{2}(h)\) respectively.

Lemma \ref{lem:reflectionsangles}  in the above proof exposes the simple relation between reflections from vertical and horizontal boundary segments and the corresponding gluing rule in the angles variables. Corollaries  \ref{cor:nontrivialtop} and \ref{cor:moresteps} follow from this construction;  steps (two rays meeting at a \(\frac{3\pi}{2}\) corner) produce for sufficiently high individual energies a single hole, a staircase in the configuration space creates at sufficiently high individual energies a nibbled hole in the angles variables, a strip with handles creates, for intermediate  individual energies several disconnected components and for  sufficiently high individual energies  two holes, and a rectangle creates for sufficiently high individual energies four holes, see Fig. \ref{fig:torus1} for a demonstration. Thus, by constructing a nibbled scattering geometry which combines finite and semi-infinite horizontal and vertical segments in the configuration space, the number of holes and the number of connected components in the iso-energy level set surfaces can be  manipulated. Moreover, constructing an impact energy-momentum diagram  \cite{pnueli2018near,Pnueli2019}, such as Fig \ref{fig:embd} for the one-step system, allows to identify the critical energy values at which the topology of the energy surface changes.

\begin{table}[ht]
\begin{center}
  \begin{tabular}{|c|c|c|}\hline
Corner position\ & \(\theta_{1}^{wall}(h_{1}^{step})\) &   \(\theta_{2}^{wall}(h_{2}^{step})\) \\\hline\hline
\(q_{1,2}^{wall}<0\) & \(\pi\) &\( \pi\) \\\hline
 \(q_{1}^{wall}<0\), \(q_{2}^{wall}>0\) & \(\pi\)  & \(0\) \\\hline
 \(q_{1}^{wall}>0\), \(q_{2}^{wall}<0\) & \(0\)  & \( \pi \) \\\hline
\(q_{1,2}^{wall}>0\) & \(0\)  &\(0\) \\\hline
\end{tabular}
\end{center}
\caption{The  values of \(\theta_{1,2}^{wall}\) at the two edges of \(\mathcal{R}^{c}(h)\). The values of    \(\theta_{2}^{wall}(h-h_{1}^{step};q_2^{wall})\) and \(\theta_{1}^{wall}(h-h_{2}^{step};q_1^{wall})\) vary accordingly with \(h \), with limiting values  \(\theta_i^{wall,\infty}\in(0,\pi)\), see rows B,C of Fig. \ref{fig:lshapesall}. }\label{tab:thetat12}
\end{table}

\begin{figure}
    \includegraphics[scale = 0.5]{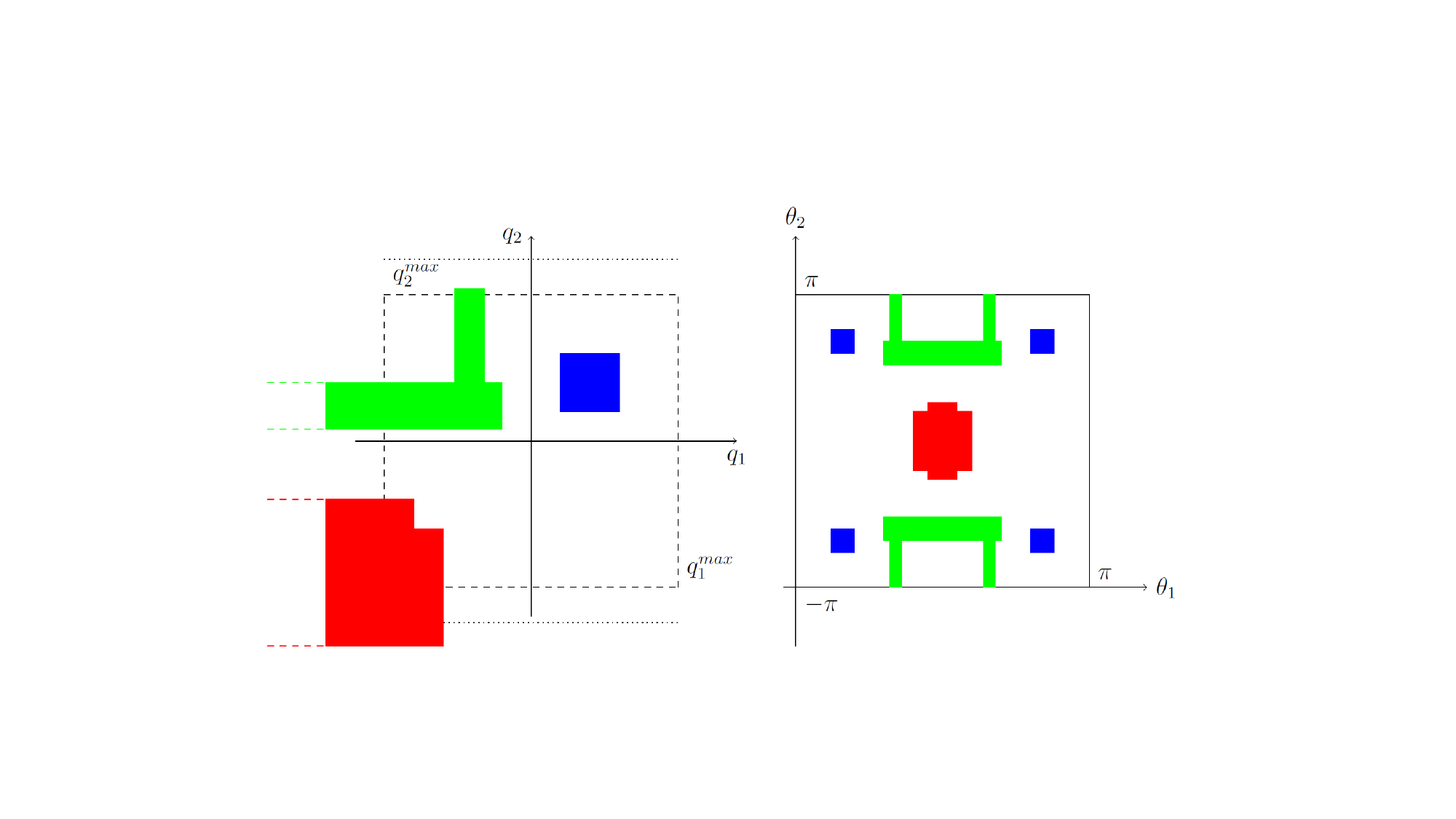}
\caption{For the indicated level set (dashed line), a 2-step staircase (red), a strip with a handle (green) and a block (blue) in the configuration space (left figure) create, respectively, one, one and four holes in the angle-angle torus representation, and divide the torus to two disconnected components (inside and outside of the green frame). A slight increase in the vertical energy \(e_{2}\) (dotted lines) makes the level set surface connected with two green holes.  }
\label{fig:torus1}
\end{figure}

 \section{Return maps\label{sec:returnmaps} }

 \textit{Proof of Theorem \ref{thm:mainIEMsh}}  In Theorem \ref{thm:mainflow} we proved that the step dynamics on each iso-energy level set is conjugated, via the action angle transformation, to the  \((\omega_1(e_1),\omega _{ 2}(h-e_{1})) \) directional flow on a flat surface - a glued swiss cross for level sets in \(\mathcal{R}^{c}(h)\) (lemma \ref{lem:lshaped}) and  a torus for level sets in the complement to \(\mathcal{R}^{c}(h)\)  (lemma \ref{lem:ridyntorus}). The transverse Poincar\`e section  \(\Sigma_{1}\) of the step flow is conjugated to the transverse section  \(\theta_{1}=0\) on these surfaces via the action-angle transformation  (recall (\ref{eq:defsigmai}), and notice that the assumptions on the potentials imply that \(\omega_1(e_1)\) is bounded away from zero for any finite \(e_{1}\)), so the return map of the step flow to  \(\Sigma_{1}\) is conjugated to the return map to  \(\Sigma_{1}\) on the corresponding flat surface. The return map to \(\Sigma_{1}\) on the flat surface is an interval exchange map on a circle:  for the swiss cross a three-interval exchange map and for the torus a rotation of a single interval (see, e.g. \cite{Zorich2006}). For a fixed fundamental interval on this circle, the return map becomes, in general, a 5-IEM for the swiss-cross case and a 2-IEM for the torus. Computations of the resulting IEMs (see Theorems \ref{thm:mainmap}) show that the lengths of the intervals of the 5-IEMs and their positions on the circle for  iso-energy level sets change smoothly in the step region. In particular, conditions for having a zero length interval are expressed as an equation of smooth, non-constant functions of \(e_{1}\) which are shown to vanish at most at isolated \(e_{1}\) values in the interior of  \(\mathcal{R}^{c}(h)\) .

 Next, we calculate the iso-energetic family of IEMs,  \(\mathcal{F}(h)=\{F=F_{(e_{1},h-e_1)}\}_{e_1\in[0,h]}\)   for the 2-IEM case (Theorem \ref{thm:mainmapR12}) and for the  5-IEM case (Theorem \ref{thm:mainmap}) thus completing the proof of Theorem \ref{thm:mainIEMsh}. In section \ref{sec:moreproperties}
we explore some of the properties of the 5-IEM\ family.\  \\ \\
Let \(\Theta_{2}\) denote the gain in the  \(\theta_{2}\) phase of the return map to \(\Sigma_{1}\) when the motion is to the right of the step:\begin{equation}
\Theta_{2}=\Theta_{2}(e_{1},h;q_{1,2}^{wall})=\frac{\hat\theta_{ 1}^{wall}}{\pi}\Theta^{smooth}_{2}=\begin{cases}2\pi\frac{ \tilde T_{1}(e_1;q_{1}^{wall})}{T_{ 2}(h-e_{1})}  &  \text{if }\hat \theta_{ 1}^{wall}(e_{1},h;q_{1,2}^{wall})\neq\pi\ \\
\Theta_{2}^{smooth}(e_{1},h) & \text{if }\hat \theta_{ 1}^{wall}(e_{1},h;q_{1,2}^{wall})=\pi, \\
\end{cases}\label{eq:Theta}
\end{equation}where \(\hat \theta_{ 1}^{wall}(e_{1},h;q_{1,2}^{wall})\) (see   Eq. (\ref{eq:thetahatforall})) is the effective impact angle with the side boundary of the step and  \(\Theta_{2}^{smooth}(e_{1},h)\) (see Eq. (\ref{eq:Theta1smooth})) is the rotation in \(\theta_{2}\) for non-impacting trajectories.  Notice that for all level sets on which motion is defined \( \Theta_{2}\leq\Theta_{2}^{smooth}\). Let\begin{equation}
\Theta_{2}^{*}(e_{1},h;q_{1,2}^{wall})=2\hat \theta_{ 2}^{wall}\left\{\frac{\Theta_{2}}{2\hat \theta_{ 2}^{wall}}\right\}\end{equation}where \(\left\{x\right\}\)  denotes hereafter the fractional part of the number \(x\). We first establish that in the complementary sets to \(\mathcal{R}^{c}(h)\) the return map  to \(\Sigma_{1}\) is the rotation (\ref{eq:theta2rotation}):
\begin{thm}
\label{thm:mainmapR12}  Under the same conditions of Theorem \ref{thm:mainflow},  for all iso-energy level sets in   \(\mathcal{R}^{1}(h)\cup \mathcal{R}^{2}(h)\), the return map \(F_{(e_{1},h-e_1)}\) to the section \(\Sigma_{1}\)  is  topologically conjugated to a   \(\Theta_{2}\) rotation on the  \([ -\hat \theta_{ 2}^{wall},\hat \theta_{ 2}^{wall}) \) circle:\begin{equation}\label{eq:theta2rotation}
\theta_{2}\rightarrow\theta_{2}+\Theta_{2}(e_{1},h;q_{1,2}^{wall})\text{ mod }  2\hat\theta_{2}^{wall},
\end{equation}
or, equivalently, to a 2-IEM on the interval   \([ -\hat \theta_{ 2}^{wall},\hat \theta_{ 2}^{wall}] \)    with intervals lengths
 \(\lambda_{A}=2\hat\theta_{2}^{wall}-\Theta_{2}^{*},\lambda_{B}=\Theta_{2}^{*}\).\end{thm}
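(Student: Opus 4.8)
The plan is to reduce the statement to a first-return computation on the clipped tori supplied by Lemma~\ref{lem:ridyntorus}, and then to recognize the resulting circle rotation as a two-interval exchange by cutting the circle at a single point. A level set in $\mathcal{R}^{1}(h)$ lies in $\mathcal{\bar R}^{1}(h)$, so (on level sets carrying motion) its dynamics is conjugate to the $(\omega_1(e_1),\omega_2(e_2))$-directional flow on $\mathbb{T}_1$, where $\theta_1$ runs over the full circle $[-\pi,\pi)$ and $\theta_2$ over the clipped circle $[-\hat\theta_2^{wall},\hat\theta_2^{wall})$; symmetrically, a level set in $\mathcal{R}^{2}(h)$ lies in $\mathcal{\bar R}^{2}(h)$ and is conjugate to the same directional flow on $\mathbb{T}_2$, where now $\theta_1$ is clipped to $[-\hat\theta_1^{wall},\hat\theta_1^{wall})$ and $\theta_2$ is full. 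The point I would stress, already embedded in the gluing argument of Lemma~\ref{lem:ridyntorus} together with Lemma~\ref{lem:reflectionsangles}, is that the impact rule $\theta_i^{wall}\mapsto-\theta_i^{wall}$ turns the bouncing of the clipped coordinate into a genuine monotone rotation on a circle of circumference $2\hat\theta_i^{wall}$, not a back-and-forth billiard motion.

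Next I would compute the first return to $\Sigma_1$. By \eqref{eq:defsigmai} this section is $\{\theta_1=0\}$, crossed transversely since $\omega_1(e_1)$ is bounded away from zero, and $0$ lies in the interior of the $\theta_1$-range (with $\hat\theta_1^{wall}=\pi$ in the $\mathcal{R}^1$ case), so it is met exactly once per traversal of the $\theta_1$-circle. Because $\theta_1$ rotates rigidly at speed $\omega_1$, the return time is $t_{\mathrm{ret}}=2\hat\theta_1^{wall}/\omega_1$, during which $\theta_2$ advances by $\omega_2 t_{\mathrm{ret}}=2\hat\theta_1^{wall}\,\omega_2/\omega_1=(\hat\theta_1^{wall}/\pi)\,\Theta_2^{smooth}=\Theta_2$, the last two equalities being exactly the definition \eqref{eq:Theta}. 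Since $\theta_2$ lives on the circle of circumference $2\hat\theta_2^{wall}$, the induced section map is precisely the rotation \eqref{eq:theta2rotation}, $\theta_2\mapsto\theta_2+\Theta_2 \bmod 2\hat\theta_2^{wall}$. A single formula thus covers both families, the only difference being which of $\hat\theta_1^{wall},\hat\theta_2^{wall}$ equals $\pi$.

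It then remains to record the standard fact that a circle rotation, read on a fundamental interval, is a two-interval exchange. Reducing $\Theta_2$ modulo $2\hat\theta_2^{wall}$ to its representative $\Theta_2^{*}\in[0,2\hat\theta_2^{wall})$ and cutting at $\hat\theta_2^{wall}$, the interval $[-\hat\theta_2^{wall},\hat\theta_2^{wall})$ splits into the piece that wraps around, of length $\Theta_2^{*}$, and its complement, of length $2\hat\theta_2^{wall}-\Theta_2^{*}$; the rotation exchanges these two subintervals, yielding the claimed 2-IEM with $\lambda_A=2\hat\theta_2^{wall}-\Theta_2^{*}$ and $\lambda_B=\Theta_2^{*}$.

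The step warranting the most care, and the one I regard as the main obstacle, is the $\mathcal{R}^{2}(h)$ case, where $\Sigma_1$ is the \emph{clipped} coordinate: one must verify that the reflections off the $1$-boundary genuinely assemble, via the $p_1\mapsto-p_1$ symmetry and the phase convention \eqref{eq:defsigmai}, into a rotation with return time $2\hat\theta_1^{wall}/\omega_1$, and that this reproduces $\Theta_2=2\pi\tilde T_1/T_2$ in agreement with \eqref{eq:Theta}. This is precisely where Lemmas~\ref{lem:reflectionsangles} and~\ref{lem:ridyntorus} do the work; once the rotation structure is granted, the rest is the elementary computation above.
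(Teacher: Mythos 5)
Your proposal is correct and follows essentially the same route as the paper: both invoke Lemma \ref{lem:ridyntorus} to pass to the clipped tori $\mathbb{T}_1,\mathbb{T}_2$, compute the return to $\{\theta_1=0\}$ as a rigid rotation of $\theta_2$ by $\Theta_2$ (with the return time $2\hat\theta_1^{wall}/\omega_1$ giving $\Theta_2^{smooth}$ in the $\mathcal{R}^1$ case and $2\pi\tilde T_1/T_2$ in the $\mathcal{R}^2$ case, exactly matching \eqref{eq:Theta}), and then read the rotation as a 2-IEM on a fundamental interval. The only detail the paper makes explicit that you leave implicit is the degenerate case $\hat\theta_2^{wall}=\emptyset$ (level sets in the disallowed region), where the statement holds vacuously.
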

 \begin{proof} By lemma \ref{lem:ridyntorus} the flow on  level sets belonging to      \(\mathcal{R}^{1}(h)\)  is  topologically conjugated to the \((\omega_1(e_1),\omega _{ 2}(h-e_{1})) \) directional flow on the torus \(\mathbb{T}_{1}(e_{1},h-e_{1})\) of Eq. (\ref{eq:striptorus}). Notice that if the level set is in the disallowed region of       \(\mathcal{R}^{1}(h)\), then   \(\hat \theta_{2}^{wall}=\emptyset\), hence \(\mathbb{T}_{1}(e_{1},h-e_{1})=\emptyset\), so the Theorem trivially holds. For the non-trivial case, by (\ref{eq:defsigmai}), the transverse section  \(\Sigma_{1}\)  to the flow is mapped, for a fixed level set, to the transverse section \(\theta_{1}=0\) of the corresponding torus. Hence, to complete the proof we need to show that the return map to the section \(\theta_{1}=0\) of the  \((\omega_1(e_1),\omega _{ 2}(h-e_{1})) \) directional flow on  \(\mathbb{T}_{1}(e_{1},e_{2})\)  is the rotation  (\ref{eq:theta2rotation}). Indeed, notice that for the level sets in      \(\mathcal{R}^{1}(h)\) the effective impact angle is \(\hat \theta_{1}^{wall}=\pi\) (when motion is allowed), so \( \Theta_{2}=\Theta_{2}^{smooth}(e_{1},h)=2\pi\frac{  \omega _{ 2}(h-e_{1})}{\omega _{ 1}(e_{1})}\) and thus   (\ref{eq:theta2rotation}) coincides with the return map on the \(\mathbb{T}_{1}(e_{1},h-e_{1})\) torus.  Similarly, by lemma \ref{lem:ridyntorus}, the flow on  level sets belonging to      \(\mathcal{R}^{2}(h)\)  is  topologically conjugated to the \(\)\((\omega_1(e_1),\omega _{ 2}(h-e_{1})) \) directional flow on the rotated torus \(\mathbb{T}_{2}(e_{1},e_{2})\) of Eq. (\ref{eq:striptorus}), namely on   \(
\mathbb{T}_{2}(e_{1},e_{2})=\{(\theta_{ 1},\theta_{2})|\theta_{1}\in[-\hat \theta_{1}^{wall},\hat \theta_{1}^{wall}),\theta_{2}\in[-\pi,\pi)\}.
\label{eq:striptorus1}\)
The return map to the section  \(\theta_{1}=0\) on this torus is  a rotation of the \(\theta_{2}\) angle on the \(2\pi\) circle by \(\omega _{ 2}(h-e_{1})\frac{2\hat \theta_{1}^{wall}}{\omega_1(e_1)}\), which is exactly \(\Theta_2\) (see Eq. (\ref{eq:Theta})). Finally, since  \(\hat \theta_{2}^{wall}=\pi\) for the allowed level sets in \(\mathcal{R}^{2}(h)\),   (\ref{eq:theta2rotation}) is verified.    \end{proof}
Next, we establish that for  level sets  in   \(\mathcal{R}^{c}(h)\),  the return map defines a three-interval map on the circle, namely a 5-IEM on the fundamental segment arises. Let

  \begin{equation}
\chi_{2}(e_{1},h;q_{1,2}^{wall})=\frac{\Theta_2^{smooth}-\Theta_2}{2\theta_{2}^{wall}}=\frac{ T_{1}(e_1)-\tilde T_{1}(e_1;q_{1}^{wall})}{\tilde T_{ 2}(h-e_{1};q_{2}^{wall})}=
\frac{\omega_{2}(h-e_{1})}{\omega_{1}(e_1)}\frac{\pi-\theta_{1}^{wall}(e_{1};q_1^{wall})}{\theta_{2}^{wall}(h-e_{1};q_2^{wall})} \label{eq:psi} \end{equation}
denote the ratio\ between the time  spent above the step and the return time to the upper step boundary. The integer part of \(\chi_{2}\) corresponds to the minimal number of impacts with the upper boundary of the step during this passage:
 \begin{equation}
K_{2}(e_{1},h;q_{1,2}^{wall})=\left\lfloor \chi_{2}\right\rfloor.\label{eq:kofe1}\end{equation}

\begin{thm}
\label{thm:mainmap}  Under the same conditions of Theorem \ref{thm:mainflow},  for all iso-energy level sets in   \(\mathcal{R}^{c}(h)\), the return map \(F_{(e_{1},h-e_1)}\) to the section \(\Sigma_{1}\)  is topologically conjugated to a 3 interval  IEM on the  \(\theta_{2}\) circle of the form:\begin{equation}\label{eq:intervalsorder}
 (J_{R},J_{K_{2}},J_{K_{2}+1})\rightarrow\Theta_{2}+(J_{R},J_{K_{2}+1},J_{K_{2}}) \text{ mod }2\pi,
\end{equation}
 where the lengths of the intervals are:\begin{equation}
(\lambda _{J_{R}},\lambda _{J_{K_{2}}},\lambda _{J_{K_{2}+1}})=(2\pi-2\theta_{2}^{wall},2\theta_{2}^{wall}(1-\left\{\chi_{2}\right\}),2\theta_{2}^{wall}\left\{\chi_{2}\right\}
),\label{eq:intervallengths}\end{equation}
  and the phase of the left boundary of \(J_{R}\) is\begin{equation}
\theta^{L}_{J_{R}}=\theta_{2}^{wall}-\frac{1}{2}\Theta_{2} \text{  (mod  $2\pi$).}
\label{eq:thetajrleft}\end{equation} In the above formulae  \((\theta_{ 2}^{wall}=\theta_{ 2}^{wall}(h-e_1;q_{ 2}^{wall}),\Theta_{2},\chi_{2})\) are defined by Eqs. (\ref{eq:thetaiwalldef},\ref{eq:Theta},\ref{eq:psi}) respectively and the phase \(\theta_{2}\) is set by (\ref{eq:defsigmai}). The return time to \(\Sigma_{1}\) for \(\theta_2\in J_{R}\) is \(\tilde T_{1}\) whereas for \(\theta_2\in J_{K_{2}}\cup J_{K_{2}+1}\) it is \( T_{1}\).       Equivalently, the dynamics for each level set is conjugated to the induced 5-IEM on the   \([ -\pi,\pi) \) interval of \(\theta_{2}\) values. This 5-IEM is uniquely defined  by Eqs. (\ref{eq:intervalsorder}-\ref{eq:thetajrleft}), and apart of isolated points of \(e_{1}\)   values in \(\mathcal{R}^{c}(h)\), all its 5 intervals are of positive lengths.
\end{thm}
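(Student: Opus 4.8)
The plan is to carry the conjugacy of Lemma~\ref{lem:lshaped} all the way down to the section. For a level set $(e_1,h-e_1)\in\mathcal{R}^c(h)$ the flow is the straight-line flow in the fixed direction $(\omega_1(e_1),\omega_2(h-e_1))$ on the swiss-cross surface, and by the convention (\ref{eq:defsigmai}) the section $\Sigma_1$ corresponds to the line $\{\theta_1=0\}$. Since $\theta_1=0$ runs through the interior of the vertical arm (where $\theta_2$ is unconstrained) and $\omega_1(e_1)$ is bounded away from zero, this line meets the surface in a full circle of circumference $2\pi$ crossed transversally by the flow. It therefore suffices to compute the first-return map of the directional flow to $\{\theta_1=0\}$, parametrized by $s=\theta_2\in[-\pi,\pi)$, and to identify it with (\ref{eq:intervalsorder})--(\ref{eq:thetajrleft}).

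First I would split the circle according to the first edge the orbit of $(0,s)$ meets. Integrating from $\theta_1=0$, the orbit reaches the right edge $\theta_1=\theta_1^{wall}(e_1)$ of the vertical arm with $\theta_2=s+\tfrac12\Theta_2$, using $\omega_2\theta_1^{wall}/\omega_1=\tfrac12\Theta_2$ (which follows from (\ref{eq:thetaiwalldef}) and (\ref{eq:Theta})). By Lemma~\ref{lem:reflectionsangles} the orbit meets the $1$-boundary precisely when $|s+\tfrac12\Theta_2|>\theta_2^{wall}$; this defines the arc $J_R$, of length $2\pi-2\theta_2^{wall}$ and left endpoint $\theta_2^{wall}-\tfrac12\Theta_2$, giving (\ref{eq:thetajrleft}). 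On $J_R$ the reflection glues $\theta_1^{wall}\mapsto-\theta_1^{wall}$ at fixed $\theta_2$, so the orbit returns to $\theta_1=0$ after $\theta_1$ advances by $2\theta_1^{wall}$, i.e.\ after time $\tilde T_1$, with $\theta_2\mapsto s+\Theta_2$; this is the leading translation by $\Theta_2$ in (\ref{eq:intervalsorder}) and the return time $\tilde T_1$ asserted for $J_R$.

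The crux is the complementary pass-through arc, on which the orbit enters the horizontal arm. Here the key point I would emphasize is that the swiss cross is a \emph{translation} surface, so the reflection rule $\theta_2^{wall}\mapsto 2\pi-\theta_2^{wall}$ of Lemma~\ref{lem:reflectionsangles} acts on the horizontal arm as a \emph{rotation} of $\theta_2$ modulo $2\theta_2^{wall}$, not as a fold; this is exactly what keeps the return map orientation preserving, hence an honest IEM rather than a piecewise isometry with reflections. While $\theta_1$ traverses the horizontal arm (advancing by $2\pi-2\theta_1^{wall}$, so with return time $T_1$), the angle $\theta_2$ advances by $\Theta_2^{smooth}-\Theta_2=2\theta_2^{wall}\chi_2$ (by (\ref{eq:psi})) and is then reduced modulo $2\theta_2^{wall}$. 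The number of full wraps is $K_2=\lfloor\chi_2\rfloor$, the minimal number of impacts with the upper boundary (\ref{eq:kofe1}); the residual $2\theta_2^{wall}\{\chi_2\}$ cuts the pass-through arc at $\theta_2=\theta_2^{wall}(1-2\{\chi_2\})-\tfrac12\Theta_2$ into the sub-arc $J_{K_2}$ that wraps $K_2$ times (length $2\theta_2^{wall}(1-\{\chi_2\})$) and the sub-arc $J_{K_2+1}$ that wraps once more (length $2\theta_2^{wall}\{\chi_2\}$), matching (\ref{eq:intervallengths}). Adding the two vertical-arm advances $\tfrac12\Theta_2$ before and after, the map sends $J_{K_2}$ and $J_{K_2+1}$ by the translations $\Theta_2+2\theta_2^{wall}\{\chi_2\}$ and $\Theta_2-2\theta_2^{wall}(1-\{\chi_2\})$ respectively, each with return time $T_1$.

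Finally I would assemble the pieces: a direct check that the three image arcs fill the circle consecutively, starting from $\Theta_2+(\theta_2^{wall}-\tfrac12\Theta_2)$, shows that their cyclic order is $(J_R,J_{K_2+1},J_{K_2})$ while the domain order is $(J_R,J_{K_2},J_{K_2+1})$, which is precisely the three-interval map (\ref{eq:intervalsorder}). Restricting this circle map to the fundamental interval $[-\pi,\pi)$ cuts at the two points $\theta_2=\pm\pi$, subdividing at most two arcs together with their images, so the induced interval map is a $5$-IEM, uniquely determined by the data (\ref{eq:intervalsorder})--(\ref{eq:thetajrleft}). Positivity of all five lengths away from isolated $e_1$ follows because $\theta_2^{wall}(h-e_1)$, $\Theta_2(e_1,h)$ and $\chi_2(e_1,h)$ are smooth and non-constant in $e_1$ on $\mathcal{R}^c(h)$: the lengths in (\ref{eq:intervallengths}) vanish only where $\chi_2\in\mathbb{Z}$, and the extra subdivisions from the cut degenerate only where an arc endpoint hits $\pm\pi$, both of which are isolated conditions. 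I expect the main obstacle to be precisely this pass-through bookkeeping, namely reading the translation-surface gluing as a rotation (not a fold), locating the split point between $J_{K_2}$ and $J_{K_2+1}$, and tracking the image positions accurately enough to produce the stated permutation.
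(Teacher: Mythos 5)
Your construction of the circle map is correct and follows the same route as the paper: conjugate to the directional flow on the swiss-cross via Lemma~\ref{lem:lshaped}, split the section circle $\{\theta_1=0\}$ according to whether the orbit hits the right wall of the vertical arm or passes into the horizontal arm, and track the $\theta_2$-advance $\tfrac12\Theta_2$ in each vertical-arm passage and the mod-$2\theta_2^{wall}$ reduction in the horizontal arm. Your identification of $J_R$ (length $2\pi-2\theta_2^{wall}$, left endpoint $\theta_2^{wall}-\tfrac12\Theta_2$), of the split point $u^*=\theta_2^{wall}(1-2\{\chi_2\})$ between $J_{K_2}$ and $J_{K_2+1}$, of the translations $\Theta_2+2\theta_2^{wall}\{\chi_2\}$ and $\Theta_2-2\theta_2^{wall}(1-\{\chi_2\})$, of the return times, and of the image order $(J_R,J_{K_2+1},J_{K_2})$ all agree with Eqs.~(\ref{eq:intervalsorder})--(\ref{eq:thetajrleft}) and~(\ref{eq:explicitiemap}); your emphasis on the gluing acting as a cylinder rotation rather than a fold is exactly the right point, and your explicit location of the split point is if anything more detailed than the paper's.

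The gap is in the last step. You dispose of the positivity claim by asserting that $\{\chi_2\}=0$ and ``an arc endpoint hits $\pm\pi$'' are isolated conditions ``because $\theta_2^{wall},\Theta_2,\chi_2$ are smooth and non-constant in $e_1$.'' For $C^r$ (non-analytic) potentials this does not follow: a smooth non-constant function can sit at an integer value, or two smooth functions can coincide, on a whole subinterval of $e_1$. Moreover the endpoint conditions are genuine functional relations between \emph{several} of these functions, e.g.\ $\Theta_2=\pm2\theta_2^{wall}+2\pi(1+2M)$ and $\Theta_2=2\theta_2^{wall}(1-2\{\chi_2\})+2\pi(1+2M)$ (the paper's Eq.~(\ref{eq:degeneratelevelsets})), so one must rule out that both sides vary identically in $e_1$. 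The paper does this work in Lemma~\ref{lem:independence}: it shows $\chi_2,\Theta_2,\theta_2^{wall}$ are pairwise functionally independent because they depend non-trivially on $e_1$ through \emph{distinct} parameters ($\Theta_2$ through $q_1^{wall}$, $\theta_2^{wall}$ through $q_2^{wall}$), computing the mixed derivatives $\partial^2/\partial q_i^{wall}\partial e_1$ to exclude an identical relation, and then applies this to each row of (\ref{eq:degeneratelevelsets}). You need this (or some equivalent non-degeneracy argument) to legitimately conclude that all five intervals have positive length away from isolated $e_1$ values; as written, that clause of the theorem is asserted rather than proved.
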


\begin{proof}

 By lemma \ref{lem:lshaped} the flow on  level sets belonging to      \(\mathcal{R}^{c}(h)\)  is  topologically conjugated to the \((\omega_1(e_1),\omega _{ 2}(h-e_{1})) \) directional flow on SW - the swiss-cross surface defined by  \(\theta_{ 1}^{wall}(e_1;q_{1}^{wall})\) and \(\theta_{ 2}^{wall}(h-e_1;q_{ 2}^{wall})\). In particular, the section \(\Sigma_1\) of the return map is mapped by the action-angle conjugation to the vertical center of SW, the \(2\pi\) circle of \(\theta_{2}\) phases (see Fig. \ref{fig:cross}), so the return map on SW and the step dynamics return map to  \(\Sigma_1\) are smoothly conjugated. While the return map can be computed from the SW geometry alone, we find it convenient at times to  consider the step dynamics.\\ We divide the \(\theta_2\) circle to two sub-intervals: \(J_{R}\) consisting of phases with trajectories which hit the right boundary of the step (equivalently, the right boundary of the vertical arm of SW) and return to \(\Sigma_{1}\), and \(J_{U}\) consisting of phases with trajectories which do not hit the right boundary (equivalently, enter the horizontal arm of the SW), go above the step, possibly hitting the upper boundary of the step (equivalently, the horizontal boundaries of the SW horizontal arm), and then return to \(\Sigma_1\) (see Fig. \ref{fig:returnmap} and Fig. \ref{fig:cross} where the return map construction to \(\Sigma_{1}\) in the configuration space and in the directional flow on the swiss-crossed shaped polygon are shown). Hence, the length of \(J_{R}\) is the length of the vertical right boundary of the SW, \(\lambda _{J_{R}}=2\pi-2\theta_{2}^{wall} =2\pi(1-\frac{\tilde T_2}{T_{2}})\) and \(\lambda _{J_{U}}=2\theta_{2}^{wall} \).\\ The  return time for trajectories belonging to \( J_{R}\) is \(\tilde T_{1}\), the phase \(\theta_{2}\) for these trajectories increases at the constant speed \(\omega_{2}(h-e_1\)), so, the interval \( J_{R}\) is  rotated by
 \(2\frac{ \omega_{2}(h-e_1)}{\omega_{ 1}(e_{1})}\theta_{1}^{wall}\), namely by \(\Theta_{2}\) as defined in (\ref{eq:Theta}). \\  The  return time for trajectories belonging to \( J_{U}\) is  \( T_{1}\). It is divided to the time \(\tilde T_{1}\), where the trajectories are to the right of the step and to   the time interval \(T_{1}(e_{1})-\tilde T_{1}(e_{1};q_1^{wall})\) where the trajectories are above the step, possibly bouncing off its upper boundary. During the  \(\tilde T_{1}\) time segment the phase \(\theta_{2}\) increases, as before, by \(\Theta_{2}\). During the  \(T_{1}(e_{1})-\tilde T_{1}(e_{1};q_1^{wall})\) segment, the phase gain depends on the number of bounces. Denote the interval of \(\theta_{2}\) values for which trajectories hit the upper step \(k\) times by \(J_{k}\). The function \(\chi_{2}\) (see  Eq. (\ref{eq:psi})) provides the ratio between the time trajectories in \(J_{U}\) spend above the step and the return time of trajectories with energy \(e_{2}=h-e_1\)\ to the step upper boundary. Hence, the number of  bounces of the trajectories belonging to \(J_{U}\) is either   \(K_{2}=\left\lfloor \chi_{2}\right\rfloor\) or \(K_{2}+1\), namely,  \(J_{U}= J_{K_2}\cup J_{K_2+1}\). \\ The phase gained during the    \(T_{1}(e_{1})-\tilde T_{1}(e_{1};q_1^{wall})\) segment by trajectories in \(J_k\) is \(2(\pi-\theta_{2} ^{wall})k+\omega_{2}\tilde T_2\chi_{u}=2(\pi-\theta_{2} ^{wall})k+2\theta_{2}^{wall}(\left\lfloor\chi_{2}\right\rfloor+\left\{\chi_{2}\right\})=2\pi k+2\theta_{2} ^{wall}(K_{2}-k)+2\theta_{2}^{wall}\left\{\chi_{u}\right\}\), hence, applying this formula for \(k=K_{2}\) and for \(k=K_{2}+1 \) we obtain:\begin{equation}
 F(\theta_{2})=\begin{cases}
\theta_2+\Theta_{2}
 & \theta_2\in J_{R}\  \\
\theta_2 +\Theta_{2}+2\theta_{ 2}^{wall}\left\{\chi_{2}\right\}+2\pi K_{2} & \theta_2\in J_{K_{2}} \\
\theta_2+\Theta_{2}+2\theta_{ 2}^{wall}
(-1+\left\{\chi_{2}\right\} )+2\pi (K_{2}+1) & \theta_2\in J_{K_{2}+1}
\end{cases} \quad \label{eq:explicitiemap}
\end{equation}\\ where the   intervals \((J_{R},J_{K_{2}},J_{K_{2}+1})\),  correspond, respectively, to phases with trajectory segments which hit  exactly once only the right side of the step (\(J_{R}\)), those which hit only the upper side of the step exactly \(K_{2}\) times (\(J_{K_{2}}\)) and those hitting only the upper side exactly \(K_{2}+1\) times (\(J_{K_{2}+1}\)), where \(K=K_{2}(e_{1},h) \) (see eq. (\ref{eq:psi}-\ref{eq:kofe1})). Notice that \(\chi_{2}\) is finite since \(\tilde T_2>0 \) for level sets in the step region (yet,    \(\chi_{2}\)  diverges at the step-region boundary when \(q_{2}^{wall}>0\), see  Table \ref{tab:chiutheta}).\\  \\ The order of these intervals on the circle is  \((J_{R},J_{K_{2}},J_{K_{2}+1})\); this follows from the geometry of the swiss-crossed surface or from  realizing that the right (resp. left) most end point of \(J_{R}\) corresponds  to a trajectory which hits the corner with positive (resp. negative) vertical velocity, hence, a small shift into the interval \(J_{U}\) will result in missing the step on the right side and hitting the upper part of the step on the left side, see Figure \ref{fig:orderintervals}.

Under the return map to \(\Sigma_{1}\) the two intervals \(J_{K_{2}},J_{K_{2}+1}\) switch their position and \(J_{U}\) and \(J_{R}\) rotate by \(\Theta_{2}\); This follows from formulae (\ref{eq:explicitiemap}). Indeed,
the dividing trajectory between these two intervals is the trajectory that hits the corner from the direction above the step (i.e. with \(p_{1}>0,p_2<0\)),  and this dividing trajectory is glued to the lower boundary of \( J'_{R}\)
 - since the return map is piece-wise orientation preserving this implies that  \(J_{K_{2}},J_{K_{2}+1}\) must switch their positions - see Figure \ref{fig:orderintervals}. In summary, we proved Eq. (\ref{eq:intervalsorder}).\\
 The lengths of the intervals,  \(\lambda_{\alpha}\) of Eq. (\ref{eq:intervallengths}), follow either from the swiss-cross geometry, or, equivalently, from formulae  (\ref{eq:explicitiemap}), or by considering the phases of the trajectories which hit the step corner  (see  Fig. \ref{fig:orderintervals}):
\begin{align}
\lambda _{J_{K_{2}+1}}&=2\pi\frac{\tilde T_{2}}{T_{2}}
\left\{\frac{T_{1}-\tilde T_{1}}{{\tilde T_{2}}}\right\}
=2\theta_{2}^{wall}\left\{\chi_{2}\right\} \nonumber
\\
\\
\lambda _{J_{K_{2}}}&=2\pi\frac{\tilde T_{2}}{T_{2}}
 \left(  1-\left\{\frac{T_{1}-\tilde T_{1}}{{\tilde T_{2}}}\right\}  \right)=2\theta_{2}^{wall}(1-\left\{\chi_{2}\right\})
\nonumber
\end{align}
 The form of the IEM on the circle is now fully determined by the rotation (\ref{eq:Theta}), the permutation (\ref{eq:intervalsorder}),  and the lengths of the intervals (\ref{eq:intervallengths}).
\\ To determine the 5-IEM on the fundamental interval \([-\pi,\pi)\), we need to identify how the circle-intervals and their images \(J_{\alpha},J_{\alpha}'\), are cut by the chosen fundamental interval, here \([-\pi,\pi)\).  Let  \(\theta_{J_{\alpha}}^{L,R},\theta_{J'_{\alpha}}^{L,R}\in[-\pi,\pi)  \) denote the left and right end points of the circle interval \(J_{\alpha},J_{\alpha}'\)  mod \(2\pi\). Namely, when  \(\theta_{J_{\alpha}}^{L}<\theta_{J_{\alpha}}^{R}\) the circle interval \(J_{\alpha}\)   is not cut by the fundamental interval, so   \(J_{\alpha}^{*}=[\theta_{J_{\alpha}}^L,\theta_{J_{\alpha}}^{R})\subset[-\pi,\pi)  \) whereas   \(\theta_{J_{\alpha}}^{L}>\theta_{J_{\alpha}}^{R}\)   means that  \(J_{\alpha}\) is split to  two intervals, so:       \(J^{*}_{\alpha}=J_{\alpha}^{1}\cup J_{\alpha}^{2}=[-\pi,\theta_{J_{\alpha}}^{R})\cup[\theta_{J_{\alpha}}^{L},\pi)   \), and the same convention is applied to the intervals images. To obtain the 5-IEM,  given an \(\alpha\) such that   \(\theta_{J_{\alpha}}^{L}>\theta_{J_{\alpha}}^{R}\)   we split that interval to two at the phase \(\pi\). Similarly, given an  \(\alpha\) such that    \(\theta_{J'_{\alpha}}^{L}>\theta_{J'_{\alpha}}^{R}\)     we split its pre-image, \(J_{\alpha}\) at \(\theta^{*}\) - the pre-image of \(\pi. \) In the non-degenerate case (i.e. when \(\theta^{L,R}_{J_\alpha},\theta^{L,R}_{J'_\alpha}\neq-\pi, J_\alpha\in\{J_{R},J_{K_2},J_{K_2+1}\}\)),  exactly one of the intervals and exactly one image of an interval is split, so, if  additionally \(\left\{\chi_{2}\right\}\neq0\),  we obtain a 5-IEM. We identify below  the \(J_{R}\) interval end points  and their images and demonstrate that this  completely determines the 5-IEM on \([-\pi,\pi)\).

The left boundary of \(J_{R}\),  \(\theta^{L}_{J_{R}}\),  is the phase of the trajectory which reaches the corner from the right with negative vertical velocity, i.e. it is the phase on \MakeUppercase{\(\Sigma_{1}\)} which arrives to the corner  \((\theta_{1}^{wall},\theta_{2}^{wall}) \) in the swiss cross (see  Figure \ref{fig:orderintervals}). Since the time of passage from \(\Sigma_{1}\) to \(\Sigma^{-}_{1}\) is half of \(\tilde T_{1}\), and since the phases in \(J_{R}\) are rotated by the phase \(\Theta_{2}\), we immediately obtain that :
\begin{equation}
\theta^{L}_{J_{R}}=\theta^{R}_{J_{K_{2}+1}}=\theta_{2}^{wall}-\frac{1}{2}\Theta_{2} \text{  (mod  $2\pi$)},\quad\theta^{L}_{J'_{R}}=\theta^{R}_{J'_{K_{2}}}=\theta_{2}^{wall}+\frac{1}{2}\Theta_{2} \text{  (mod  $2\pi$).} \end{equation}This information, together with the order of the intervals (\ref{eq:intervalsorder}) and their lengths  (\ref{eq:intervallengths}) completely determines the 5 IEM. Indeed,
\begin{equation}
\theta^{R}_{J_{R}}=\theta^{L}_{J_{K_{2}}}=\theta^{L}_{J_{R}}+\lambda_{J_{R}} \text{  (mod  $2\pi$)},
\end{equation}
 hence
\begin{equation}
\quad\theta^{L}_{J'_{K_{2}+1}}=\theta^{R}_{J'_{R}}=\theta^{R}_{J_{R}}+\Theta_{2} \text{  (mod  $2\pi$),}
\end{equation}
 and,
\begin{equation}
\theta^{R}_{J_{K_{2}}}=\theta^{L}_{J_{K_{2}+1}}=\theta^{L}_{J_{K_{2}}}+\lambda_{J_{K_{2}}}\text{ (mod  $2\pi$),}
\end{equation}so,
\begin{equation}
\quad\theta^{L}_{J'_{K_{2}}}=\theta^{R}_{J'_{K_{2}+1}}=\theta^{L}_{J'_{K_{2}+1}}+\lambda_{J_{K_{2}+1}}\text{ (mod  $2\pi$),}
\end{equation}
  and all the intervals' and their images' end points are thus determined by \(\chi_{2},\Theta_{2},\theta_{2}^{wall}\) (all  depending  on \((e_{1},h)\) and on the parameters e.g. \(q_{1,2}^{wall}\)). In particular,  the conditions under which one or more of the 5-intervals in \([-\pi,\pi)\) has zero length can be explicitly formulated: \begin{equation}\label{eq:degeneratelevelsets}
\Theta_{2}(e_{1},h,q_{1}^{wall})=\begin{cases}\Theta^{smooth}(e_{1},h)-2K\theta_{2}^{wall}(h-e_{1},q_2^{wall}) & \text{then }\lambda_{J_{K+1}}=0 \\
\pm2\theta_{2}^{wall}(h-e_{1},q_2^{wall})+2\pi(1+2M) & \text{then }-\pi\in\{\theta^{L,R}_{J_{R}},\theta^{L,R}_{J'_{R}}\} \\
2\theta_{2}^{wall}(h-e_{1},q_2^{wall})(1-2\left\{\chi_{u}\right\})+2\pi(1+2M) & \text{then }-\pi\in\{\theta^{R}_{J_{K}},\theta^{R}_{J'_{K+1}}\}
\end{cases}
\end{equation}where \(K,M \in\mathbb{Z} \). To complete the proof, we need to show that these conditions may be satisfied at most at isolated \(e_{1}\) values. To this aim, we first notice:\begin{lem}\label{lem:independence}For level sets in the step region \(\mathcal{R}^{c}\), the functions  \(\chi_{2},\Theta_{2},\theta_{2}^{wall}\) of \(e_{1}\) are pair-wise independent, and, when \(\Theta_{2}^{smooth}\) is non-constant, they are also pair-wise independent of \(\Theta_{2}^{smooth}\).
\end{lem}
\begin{proof}
 \begin{table}[ht]
  \begin{tabular}{|c|c|c|}\hline
Corner position\ & \(\chi_{2}(h_{1}^{step}),\chi_{2}(h-h_{2}^{step})\) & \(\Theta_{2}(h_{1}^{step}),\Theta_{2}(h-h_{2}^{step}) \)  \\\hline\hline
\(q_{1,2}^{wall}<0\) & \(0,\frac{A(h-h_{2}^{step})}{2\pi}\) &\(\Theta_{2}^{smooth}(h_{1}^{step},h),\frac{\Theta^{smooth}_{2}(h-h_{2}^{step})\theta_{ 1}^{wall}(h-h_{2}^{step})}{\pi}\)  \\\hline
 \(q_{1}^{wall}<0\), \(q_{2}^{wall}>0\) & \(0,\infty\)&
 \(\Theta_{2}^{smooth}(h_{1}^{step},h),\frac{\Theta^{smooth}_{2}(h-h_{2}^{step})\theta_{ 1}^{wall}(h-h_{2}^{step})}{\pi}\)  \\\hline
 \(q_{1}^{wall}>0\), \(q_{2}^{wall}<0\) & \( \frac{\Theta_2^{smooth}(h_{1}^{step},h)}{2\theta_{2}^{wall}(h-h_{1}^{step})} , \frac{A(h-h_{2}^{step})}{2\pi}\)&\(0,\frac{\Theta^{smooth}_{2}(h-h_{2}^{step})\theta_{ 1}^{wall}(h-h_{2}^{step})}{\pi}\) \\\hline
\(q_{1,2}^{wall}>0\) & \(\frac{\Theta_{2}^{smooth}(h_{1}^{step},h)}{2\theta_{2}^{wall}(h-h_{1}^{step})},\infty\)&\(0,\frac{\Theta^{smooth}_{2}(h-h_{2}^{step})\theta_{ 1}^{wall}(h-h_{2}^{step})}{\pi}\)  \\\hline
\end{tabular}
\caption{The  values of \(\chi_{2},\Theta_2\) at the two edges of \(\mathcal{\bar R}^{c}(h)\), where we use the shorthand notation    \(A(h-h_{2}^{step})=\Theta_2^{smooth}(h-h_{2}^{step})(1-\frac{\theta_{1}^{wall}(h-h_{2}^{step})}{\pi} )\). }\label{tab:chiutheta}
\end{table}

 The independence follows from the observation that the functions are smooth non-constant functions (see Tables \ref{tab:thetat12},\ref{tab:chiutheta}) that depend non-trivially on \(e_{1}\) through distinct parameters. For example, \(\Theta_{2}=\Theta_{2}(e_{1},h,q_{1}^{wall})\) whereas \(\theta_{2}^{wall}=\theta_{2}^{wall}(h-e_{1},q_2^{wall}) \) and the dependence of these two functions on \(e_{1}\) through  \(q_{1,2}^{wall}\)  is non-trivial  (i.e., it follows from Eq. (\ref{eq:thetaiwalldef},\ref{eq:Theta}) that \(\frac{\partial^{2}\theta_{2}^{wall}}{\partial q_{2}^{wall}\partial e_{1} }=-\frac{d}{de_{1}} \frac{\omega_2(h-e_1)}{\sqrt{2(h-e_{1}-V_2(q_{2}^{wall}))}}  \) and  \ \(\frac{\partial^{2}\Theta_{2}}{\partial q_{1}^{wall}\partial e_{1} }=-\frac{d}{de_{1}} \frac{2\omega_2(h-e_1)}{\sqrt{2(e_{1}-V_1(q_{1}^{wall}))}}\) hence, with, possibly, the exception of isolated \(e_{1}\) values, these derivatives do not vanish for all level sets in \(\mathcal{R}^{c}\)\() \). Hence, if  they were functionally dependent, i.e.  there was a  \(G(\theta_{2}^{wall},\Theta_{2} ;h,q_{1,2}^{wall})\equiv0\) then \(\frac{d}{dq_{2}^{wall}}G(\theta_{2}^{wall},\Theta _{2};h,q_{1,2}^{wall})=\frac{\partial G }{\partial\theta_{2}^{wall}}\frac{\partial\theta_{2}^{wall}}{\partial q_{2}^{wall} }+\frac{\partial G}{\partial q_{2}^{wall} }\equiv0\) and hence \(\frac{d^{2}}{de_{1}dq_{2}^{wall}}G(\theta_{2}^{wall},\Theta_{2} ;h,q_{1,2}^{wall})=\frac{\partial G }{\partial\theta_{2}^{wall}}\frac{\partial^{2}\theta_{2}^{wall}}{\partial e_{1}\partial q_{2}^{wall} }+\frac{\partial \frac{dG}{de_{1}} }{\partial\theta_{2}^{wall}}\frac{\partial\theta_{2}^{wall}}{\partial q_{2}^{wall} }+\frac{\partial \frac{dG}{de_{1}}}{\partial q_{2}^{wall} }=\frac{\partial G }{\partial\theta_{2}^{wall}}\frac{\partial^{2}\theta_{2}^{wall}}{\partial e_{1}\partial q_{2}^{wall} }=0.\)   Since  \(\frac{\partial^{2}\theta_{2}^{wall}}{\partial e_{1}\partial q_{2}^{wall} }\neq0 \) we conclude that \(\frac{\partial G }{\partial\theta_{2}^{wall}}=0\), namely, there is no such \(G\) with non-trivial dependence on both \(\theta_{2}^{wall}\) and \(\Theta_{2})\).  Similarly, since \(\chi_{2}=\chi_{2}(e_{1},h,q_{1}^{wall},q_{2}^{wall})\), and similarly to the above calculations, the dependence of \(\chi_{2}\)  on both \(q_{1}^{wall}\) and    \(q_{2}^{wall}\) is non-trivial in \(e_{1}\),   the pairs \((\chi_{2},\Theta_{2})\) and \((\chi_{2},\theta_{2}^{wall})\) are functionally independent. Finally, since  \(\Theta_{2}^{smooth}=\Theta_{2}^{smooth}(e_{1},h)\), by the same argument as above, provided \(\frac{\partial\Theta^{smooth}(e_{1},h)}{\partial e_{_{1}}}\neq0\) apart of isolated points, it is pair-wise independent from each of the functions   \(\chi_{2},\Theta_{2},\theta_{2}^{wall}\).  \end{proof}

Now, we can show that (\ref{eq:degeneratelevelsets}) is satisfied at most at isolated \(e_{1}\) values.  For the first two possibilities, both sides of the equation are smooth functions of \(e_{1}\) with the right hand side depending non-trivially on \(q_{1}^{wall}\) whereas the left hand side depending non-trivially on  \(q_{2}^{wall}\). Hence, by the same arguments as in lemma \ref{lem:independence}, the right and left hand side are functionally independent and their difference vanish  at most at isolated \(e_1\) values. \\ For the last row, notice that
\begin{equation}\label{eq:xithetthet2}
\chi_{2}(e_{1},h,q_{1}^{wall},q_{2}^{wall})=\frac{T_{ 2}(h-e_{1})}{\tilde T_{ 2}(h-e_{1})}\frac{ T_{1}(e_1)-\tilde T_{1}(e_1)}{ T_{ 2}(h-e_{1})}=\frac{\Theta_{2}^{smooth}-\Theta_{2}}{ 2\theta_{2}^{wall}},
\end{equation}namely,   \(2\left\{\chi_{u}\right\}\theta_{2}^{wall}=\Theta_{2}^{smooth}-\Theta_{2}-2K_{2}\theta_{2}^{wall} \), hence  the last equation becomes \begin{equation}
2\theta_{2}^{wall}(h-e_{1};q_2^{wall})(1+2K_{2})=2\Theta_2^{smooth}(e_{1},h)-\Theta_{2}(e_{1},h,q_{1}^{wall})-2\pi(1+2M),
\end{equation}
which shows, as above, that it is also satisfied at most at isolated \(e_1\) values. \\ Notice that \(\theta_{2}^{wall}>0\) for all level sets in \(\mathcal{R}^{c}(h)\), so the circle map has always at least two non-trivial components (\(J_{R} \) and \(J_U\)), and in fact, with the exception of isolated points within  \(\mathcal{R}^{c}(h)\), it has three non-trivial components since \(\left\{\chi_{2}\right\}\) vanishes at most at isolated \(e_1\) values.
\begin{figure}
\begin{centering}
\includegraphics[scale=0.35]{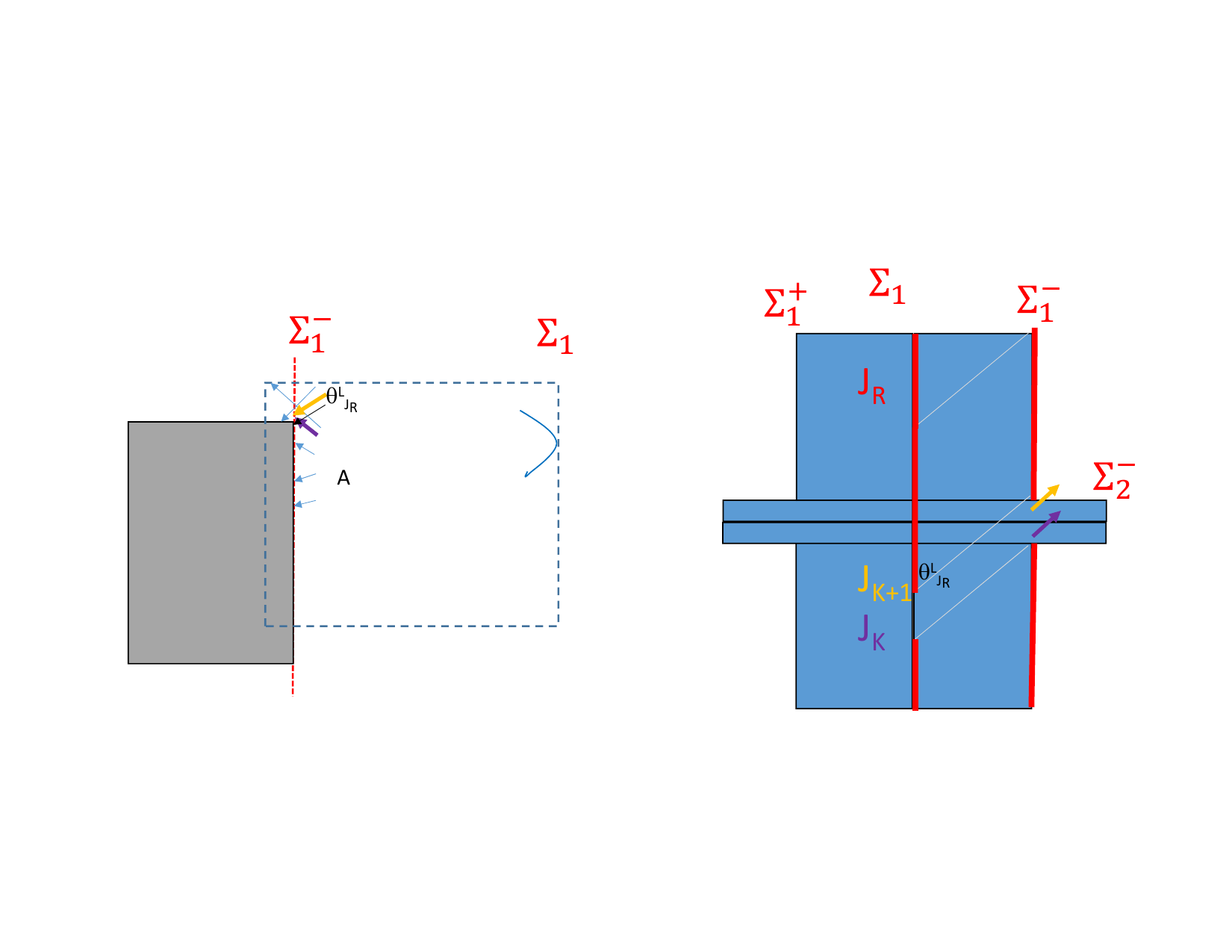}
\includegraphics[scale=0.35]{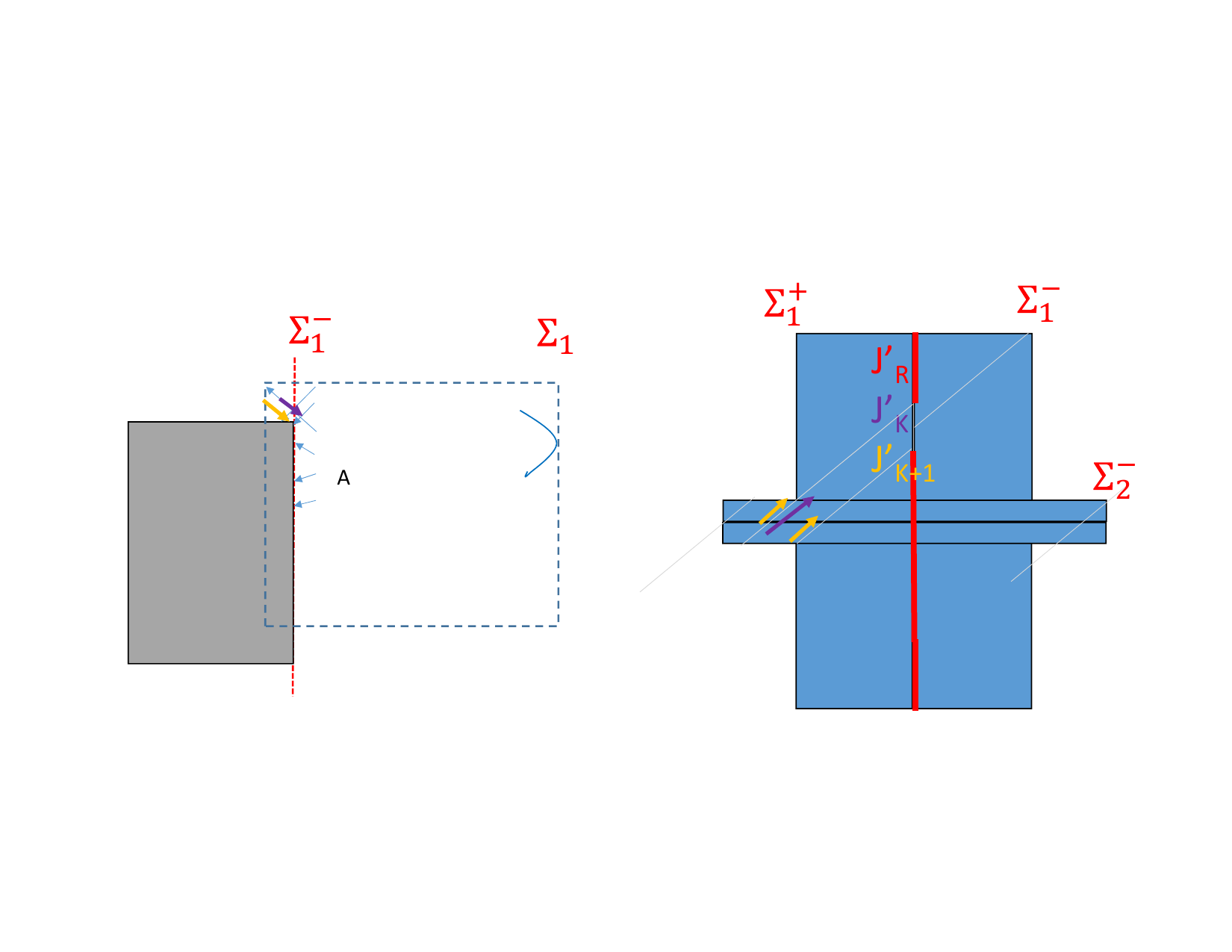}
\end{centering}
\protect\caption{Order of intervals: the intervals \(J_{K_{2}}, J_{K_{2}+1}\) correspond to orbits that bounce, respectively, \(K_{2}, K_2+1\) times above the step. On the SW surface these are the orbits that enter the horizontal arm and wrap, respectively, \(K_{2}, K_2+1\) times around it before returning to the vertical arm. Thus, their order on \(\Sigma_{1}\) is reversed by the flow (Eq. (\ref{fig:orderintervals})). The phase \(\theta_{J_{R}}^L\) denotes the left edge of \(J_R\) - the phase that separates the orbits that enter the horizontal arm from those that bounce off the vertical arm boundary, namely the vertical boundary of the step.  }\label{fig:orderintervals}
\end{figure}

 \end{proof}

   \section{Additional properties of the family of IEM\label{sec:moreproperties}.}

Theorem \ref{thm:mainmap} implies that the dynamics  for level sets in the step set are completely determined by the numerical properties of \(\chi_{2},\Theta_{2},\theta_{2}^{wall}\).    All these functions  depend smoothly on \(e_{1}\) for  the level sets in  \(\mathcal{R}^{c}(h)\) and are non-constant functions - indeed, their  values at the boundaries of  \(\mathcal{R}^{c}(h)\) are always distinct - see Tables \ref{tab:thetat12} and  \ref{tab:chiutheta}.
 Hence, these functions attain both rational and irrational values as \(e_{1}\) is varied (in some cases, but not all, these functions are also monotone in \(e_{1}\)). While one may suspect that this implies that for almost all \(e_{1}\) values the dynamics are uniquely ergodic, it is difficult to check directly when the corresponding IEM satisfies the Veech condition (see \cite{Fraczek2018}). Indeed, while Lemma \ref{lem:independence} states that the functions \(\chi_{2},\Theta_{2},\theta_{2}^{wall}\) are pair-wise independent, and, in Theorem \ref{thm:mainmap} we established that the lengths of the intervals of the IEM are non-zero with the exception of isolated \(e_{1}\) values, more delicate relations between the intervals lengths may arise. Indeed, rewriting Eq. (\ref{eq:xithetthet2}) as:
\begin{equation}\label{eq:thetaxitheta2}
\Theta_{2}^{smooth}(e_{1},h)=2\theta_{2}^{wall}(h-e_{1},q_2^{wall})\chi_{2}(e_{1},h,q_{1}^{wall},q_{2}^{wall})+\Theta_{2}(e_{1},h,q_{1}^{wall}),
\end{equation}shows that in the linear case, where \(\Theta_{2}^{smooth,LO}=2\pi\frac{\omega_{2}}{\omega_{1}}\), the three functions are functionally related! The implications of this dependence on the dynamics and the properties of it for general nonlinear oscillators are yet to be explored. For now we show, by analyzing the properties of these functions, that, for some cases minimal dynamics arise and in others non-minimal dynamics arise.
\\ \\  In particular, we establish that there can be isolated strongly resonant level sets at which orbits of different periods co-exist (e.g. if \(  {\frac{\Theta_{2}}{2\pi },\frac{2\theta_{2}^{wall}}{2\pi },\left\{\chi_{2}\right\}}\in\mathbb{\mathbb{Q})},\) level sets for which periodic and quasi-periodic motion co-exist (e.g. when \(  \{\frac{\Theta}{2\pi },\frac{2\theta_{2}}{2\pi }\}\in\mathbb{\mathbb{Q}},\left\{\chi_{2}\right\}\notin\mathbb{\mathbb{Q}}\) such a case may emerge) and  isolated level sets in   \(\mathcal{R}^{c}(h)\) at which the IEM reduces to a rotation (when \(\left\{\chi_{2}\right\}=0\) so the directional flow on SW has a diagonal trajectory in the horizontal arm).
  Notice that the analogous computations for the return map to \(\Sigma_{2}\) amounts to replacing \(1\leftrightarrow2\)  in all the above definitions.

 In particular, we notice the special role the function \( \chi_{2}\) plays: its magnitude  controls the number of bounces experienced by phases in \(J_{K_{2}}\) (recall that \(K_{2}=\left\lfloor\chi_{2}\right\rfloor\)) and its phase, \(\{\chi_{2}\}\), controls the division of \(J_{U}\) to two intervals (recall that  \(\lambda _{J_{K_{2}+1}}=2\theta_{2}^{wall}\{\chi_{2}\}\)). Hence, we study the dependence of  \( \chi_{2}\) on \(e_{1}\) and on the parameters \(q_{1,2}^{wall}\). We begin with two simple cases where we can completely characterize the dynamics: \begin{cor}\label{cor:xuzero}
 For level sets  in \(\mathcal{R}^{c}(h)\) for which  \(  \{\chi_{2}\}=0\)  the return map to \(\Sigma_{1}\)  is of only 2 intervals, namely it corresponds to a rotation by \(\Theta_{2}\), and is thus ergodic iff \(\Theta_{2}/2\pi\notin\mathbb{Q}\).      \end{cor}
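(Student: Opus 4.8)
The plan is to specialize Theorem \ref{thm:mainmap} to the degenerate case $\{\chi_2\}=0$, read off that the three-interval map collapses to a rigid rotation, and then invoke the classical dichotomy for circle rotations to obtain the ergodicity criterion. First I would substitute $\{\chi_2\}=0$ into the interval-length formula (\ref{eq:intervallengths}). This immediately yields $\lambda_{J_{K_2+1}}=2\theta_2^{wall}\{\chi_2\}=0$, so the interval $J_{K_2+1}$ is empty, while $\lambda_{J_{K_2}}=2\theta_2^{wall}$ and $\lambda_{J_R}=2\pi-2\theta_2^{wall}$. Since $0<\theta_2^{wall}<\pi$ for every level set in $\mathcal{R}^c(h)$, the two surviving intervals $J_R$ and $J_{K_2}$ have positive length and together tile the whole $\theta_2$-circle.

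Next I would use the explicit map (\ref{eq:explicitiemap}) to show that $F$ acts as the same rotation on each surviving piece. On $J_R$ one has $F(\theta_2)=\theta_2+\Theta_2$ directly. On $J_{K_2}$ the shift $2\theta_2^{wall}\{\chi_2\}$ vanishes because $\{\chi_2\}=0$, and the remaining offset $2\pi K_2$ is an integer multiple of $2\pi$, hence trivial on the circle; thus $F(\theta_2)=\theta_2+\Theta_2 \pmod{2\pi}$ on $J_{K_2}$ as well. Equivalently, the permutation (\ref{eq:intervalsorder}) that interchanges $J_{K_2}$ and $J_{K_2+1}$ becomes trivial once $J_{K_2+1}$ is empty. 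Consequently $F$ is the rigid rotation $\theta_2\mapsto\theta_2+\Theta_2 \pmod{2\pi}$ on the full circle, i.e. precisely the $2$-interval map whose two pieces rotate by the common amount $\Theta_2$. Geometrically this reflects the fact that $\chi_2\in\mathbb{Z}$ forces every trajectory entering the horizontal arm of the swiss-cross surface to wrap exactly $K_2$ times and close up along a diagonal, so no splitting of $J_U$ into two bounce-count intervals occurs.

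Finally I would invoke the classical theory of circle rotations: a rotation by angle $\Theta_2$ on the circle of circumference $2\pi$ is uniquely ergodic, indeed equidistributed by Weyl, when the rotation number $\Theta_2/2\pi$ is irrational, and has every orbit periodic when $\Theta_2/2\pi\in\mathbb{Q}$. This yields the stated equivalence. I do not expect any genuine obstacle: the statement is a direct specialization of Theorem \ref{thm:mainmap}, and the only points requiring care are the bookkeeping that the integer shift $2\pi K_2$ disappears modulo $2\pi$ and that $J_R$ and $J_{K_2}$ remain of positive length throughout $\mathcal{R}^c(h)$, both of which are immediate.
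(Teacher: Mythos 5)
Your proposal is correct and follows essentially the same route as the paper: substituting $\{\chi_2\}=0$ into the interval-length formula (\ref{eq:intervallengths}) to see that $\lambda_{J_{K_2+1}}=0$, and then reading off from (\ref{eq:explicitiemap}) that the surviving pieces both shift by $\Theta_2$ modulo $2\pi$, so the map is a rigid rotation; the ergodicity dichotomy for circle rotations is then standard. The extra bookkeeping you supply (the $2\pi K_2$ term vanishing on the circle, positivity of the two remaining intervals) is implicit in the paper's one-line argument but entirely consistent with it.
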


     \begin{proof}  By (\ref{eq:intervallengths}),   \(  \{\chi_{2}\}=0\)  implies that   \(\lambda _{J_{K_{2}}}=2\theta_{2}^{wall}>0\) and \(\lambda_{J_{K_{2}+1}}=0\), hence,  (\ref{eq:explicitiemap}) becomes a rotation by \(\Theta_{2}\).  \end{proof}

 In terms of the directed motion on the L-shaped billiard, the condition    \(  \{\chi_{2}\}=0\)  corresponds to the case  of a diagonal orbit connecting the corners of  the horizontal arm. If, additionally,  \(\Theta_{2}/2\pi\in\mathbb{Q}\) then this orbit is also a diagonal of the vertical arm.  Notably,  if \(q_{2}^{wall}>0\), close to the boundary of  \(\mathcal{R}^{c}(h)\) the horizontal sleeve becomes  narrow (see Fig. \ref{fig:theta2walldelta}) and thus there are many level sets at which   \(  \{\chi_{2}\}=0\):
\begin{lem}
\label{lem:q2posxu0} If \(q_{2}^{wall}>0\), for all \(h>h^{step}\), there are countable infinite level sets  in \(\mathcal{R}^{c}(h)\) for which  \(  \{\chi_{2}\}=0\).       \end{lem}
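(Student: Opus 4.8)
The plan is to exploit the divergence of $\chi_2$ at the right edge of $\mathcal{R}^c(h)$ when $q_2^{wall}>0$, and then apply the intermediate value theorem. First I note that $\{\chi_2\}=0$ is equivalent to $\chi_2\in\mathbb{Z}_{\geq 0}$: indeed $\chi_2=\frac{T_1(e_1)-\tilde T_1(e_1;q_1^{wall})}{\tilde T_2(h-e_1;q_2^{wall})}\geq 0$ because $\tilde T_1<T_1$ and $\tilde T_2>0$ on $\mathcal{R}^c(h)$. Moreover, as established in the discussion following Theorem \ref{thm:mainmap}, $\chi_2=\chi_2(e_1,h;q_{1,2}^{wall})$ depends smoothly (in particular continuously) on $e_1$ throughout the open interval $\mathcal{I}^c(h)=(h_1^{step},h-h_2^{step})$, since $\theta_2^{wall}(h-e_1;q_2^{wall})>0$ there.

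Next I establish that $\chi_2\to+\infty$ as $e_1\nearrow h-h_2^{step}$. Writing $e_2=h-e_1$ and using the representation
\begin{equation}
\chi_2(e_1,h;q_{1,2}^{wall})=\frac{\omega_2(h-e_1)}{\omega_1(e_1)}\,\frac{\pi-\theta_1^{wall}(e_1;q_1^{wall})}{\theta_2^{wall}(h-e_1;q_2^{wall})}
\end{equation}
from (\ref{eq:psi}), the first factor tends to the finite positive limit $\omega_2(h_2^{step})/\omega_1(h-h_2^{step})$, and the numerator $\pi-\theta_1^{wall}(e_1;q_1^{wall})$ tends to $\pi-\theta_1^{wall}(h-h_2^{step};q_1^{wall})>0$, since $\theta_1^{wall}=\pi\tilde T_1/T_1<\pi$ whenever $\tilde T_1<T_1$. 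The denominator, however, satisfies $\theta_2^{wall}(e_2;q_2^{wall})\searrow 0$ as $e_2\searrow h_2^{step}$ by (\ref{eq:thetailimits}), precisely because $q_2^{wall}>0$. Hence $\chi_2\to +\infty$, in agreement with the entries $\chi_2(h-h_2^{step})=\infty$ of Table \ref{tab:chiutheta} for the two cases with $q_2^{wall}>0$.

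Finally I produce the level sets. Fix any interior point $e_1^0\in\mathcal{I}^c(h)$ and set $c_0=\chi_2(e_1^0)<\infty$. Choosing a strictly increasing sequence of integers $N_1<N_2<\cdots$ with $N_1>c_0$, the intermediate value theorem applied to the continuous function $\chi_2$ on $[e_1^0,\,h-h_2^{step})$, on which it ranges from $c_0$ up to $+\infty$, yields for each $k$ a point $e_1^{(k)}\in(e_1^0,\,h-h_2^{step})\subset\mathcal{I}^c(h)$ with $\chi_2(e_1^{(k)})=N_k\in\mathbb{Z}_{\geq 0}$. Since the values $N_k$ are distinct the $e_1^{(k)}$ are distinct, so $\{(e_1^{(k)},h-e_1^{(k)})\}_{k\in\mathbb{N}}$ is a countably infinite family of level sets in $\mathcal{R}^c(h)$ with $\{\chi_2\}=0$, as claimed. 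The only delicate point is the divergence $\chi_2\to+\infty$, which is exactly where the hypothesis $q_2^{wall}>0$ enters, through the vanishing of $\theta_2^{wall}$ at the step energy; once this is in hand the conclusion is a routine application of the intermediate value theorem.
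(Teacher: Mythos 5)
Your proof is correct and follows essentially the same route as the paper: both arguments rest on the continuity of $\chi_2$ on $\mathcal{I}^c(h)$ together with its divergence to $+\infty$ at the right endpoint $e_1\nearrow h-h_2^{step}$ (you phrase this via $\theta_2^{wall}\searrow 0$, the paper via the equivalent $\tilde T_2\rightarrow 0$), and then conclude that $\chi_2$ passes through infinitely many integers. Your write-up merely makes the intermediate value theorem step and the finiteness of the other factors explicit.
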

     \begin{proof} Since, for \(q_{2}^{wall}>0\),    \(\tilde T_{2}(h-e_{1};q_{2}^{wall})\rightarrow 0\) as \(  e_1 \rightarrow h-h_2^{step}\) whereas \(T_{1}(e_{1})-\tilde T_{1}(e_1;q_1^{wall})\)  attains a finite positive limit (since \(h>h^{step}\)), the smooth function    \(\chi_{2}(e_1,h)=\frac{T_{1}(e_{1})-\tilde T_{1}(e_1)}{{\tilde T_{2}(h-e_1)}}\) in the open interval  \(\mathcal{I}^{c}(h)\)  becomes infinite on the interval right boundary, hence it passes
through integer \ values at countable infinite\ values of \(e_{1}\).    \end{proof}
Another case which allows a complete characterization of the motion is when  \(\Theta_{2}\) is rational and \(\theta_{2}^{wall}\) is small:
\begin{lem}\label{cor:xurational}
For level sets in  \(\mathcal{R}^{c}(h)\) for which \(\Theta_{2}=\frac{2\pi m}{n} \) and \(2\theta_{2}^{wall}<\frac{2\pi }{n}\), the IEM to \(\Sigma_{1}\) is non-ergodic. For such level sets, if    \(\{\chi_{u}\}\notin\mathbb{Q} \) the motion is dense on a union of open intervals and is periodic on its complement.  If    \(\{\chi_{u}\}\in\mathbb{Q} \) , all i.c. are periodic, yet, there are two distinct periods.   All the above  conditions are realizable for some level sets and wall positions.\end{lem}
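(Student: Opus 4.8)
The plan is to exploit the special permutation exhibited in Theorem~\ref{thm:mainmap}. On the $\theta_2$ circle $\mathbb{T}=\mathbb{R}/2\pi\mathbb{Z}$ the return map factors as $F=R_{\Theta_2}\circ\tilde\tau$, where $R_{\Theta_2}$ is the rigid rotation by $\Theta_2$ and $\tilde\tau$ is the identity on $J_R$ and, on the arc $J_U=J_{K_2}\cup J_{K_2+1}$ of length $2\theta_2^{wall}$, is the $2$-IEM that swaps $J_{K_2}$ and $J_{K_2+1}$. Since a $2$-IEM is a rotation of the sub-circle it acts on, $\tilde\tau|_{J_U}=\tau$ is the rotation of $J_U\cong\mathbb{R}/(2\theta_2^{wall})\mathbb{Z}$ by $\lambda_{J_{K_2+1}}=2\theta_2^{wall}\{\chi_2\}$, i.e.\ a rotation of rotation number $\{\chi_2\}$. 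Using Eq.~(\ref{eq:explicitiemap}) and the lengths (\ref{eq:intervallengths}) one checks directly that this factorization reproduces $F$ on each of the three intervals.

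First I would record the geometric consequence of the hypotheses. Writing $\Theta_2=2\pi m/n$ in lowest terms, the left endpoints of the translates $R^k(J_U)$, $k=0,\dots,n-1$, are equally spaced by $2\pi/n$; as each translate has length $2\theta_2^{wall}<2\pi/n$, these $n$ translates are pairwise disjoint. Set $W=\bigcup_{k=0}^{n-1}R^k(J_U)$, a union of $n$ disjoint open arcs of total length $2n\theta_2^{wall}\in(0,2\pi)$. I claim $W$ (hence $W^c$) is $F$-invariant and $F^n|_{J_U}=\tau$. Indeed $\tilde\tau$ differs from the identity only on $J_U=R^0(J_U)$, so a point in $R^k(J_U)$ with $1\le k\le n-1$ is moved by $F=R_{\Theta_2}$ to $R^{k+1}(J_U)$, whereas a point of $J_U$ is first shifted by $\tau$ inside $J_U$ and then carried to $R^1(J_U)$. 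Iterating, an orbit started in $J_U$ visits $R^1(J_U),\dots,R^{n-1}(J_U)$ in turn and returns to $R^n(J_U)=J_U$ after exactly $n$ steps, $\tilde\tau$ acting nontrivially only at the initial step. Since $R^n=\mathrm{id}$ this gives $F^n|_{J_U}=\tau$; invariance of $W$ follows along the way, and on $W^c\subset J_R$ the map $F$ coincides with the pure rotation $R_{\Theta_2}$.

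From this tower picture the three assertions follow. The splitting $\mathbb{T}=W\sqcup W^c$ into $F$-invariant sets of positive length (with $0<2n\theta_2^{wall}<2\pi$) already shows that $F$ is not ergodic. On $W^c$ the map is $R_{\Theta_2}$, so every point is periodic of period $n$. On $W$ the dynamics is conjugate to $\tau$ acting on $J_U$ with the $n$ disjoint copies forming a cyclic tower of height $n$, and is therefore governed by $\{\chi_2\}$: if $\{\chi_2\}\notin\mathbb{Q}$ then $\tau$ is minimal on $J_U$, so each orbit is dense in $J_U$ and, spread over the tower by the translations $F^r|_{J_U}=R^r\circ\tau$ ($0\le r<n$), is dense in the union of open arcs $W$ while the complement is periodic, as claimed. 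If instead $\{\chi_2\}=p'/p$ in lowest terms with $p\ge2$, then $\tau^p=\mathrm{id}$ with $p$ minimal, so each point of $J_U$ has exact $F$-period $np$; together with period $n$ on $W^c$ this yields all orbits periodic with exactly the two distinct periods $n$ and $np$.

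Finally, for realizability I would use that the controlling functions sweep out open ranges (Tables~\ref{tab:thetat12}--\ref{tab:chiutheta}) and can be steered through the parameters $q_{1,2}^{wall},h$ and the frequency ratio. Fixing a small denominator $n$, so that $2\pi/n$ is not small, I would select parameters for which $\Theta_2=2\pi m/n$ at some level set $e_1^\ast$ while $\theta_2^{wall}(h-e_1^\ast)<\pi/n$. In the genuinely nonlinear regime the pairwise independence of Lemma~\ref{lem:independence} then frees $\{\chi_2\}$ to pass through both rational and irrational values under these two constraints; in the linear-oscillator case one instead invokes the relation (\ref{eq:thetaxitheta2}), $2\theta_2^{wall}\chi_2=\Theta_2^{smooth}-\Theta_2$ with $\Theta_2^{smooth}=2\pi\omega_2/\omega_1$, and varies the free ratio $\omega_2/\omega_1$ together with $\theta_2^{wall}$ so that $\chi_2=(2\pi\omega_2/\omega_1-2\pi m/n)/(2\theta_2^{wall})$ is rational or irrational at will. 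I expect this simultaneous matching of the three constraints to be the main obstacle, since it is the only step that leans on the fine independence and variation properties of the defining functions rather than on the clean tower structure established above.
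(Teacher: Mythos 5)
Your treatment of the dynamical core of the lemma is correct and is essentially the paper's own argument, repackaged slightly more transparently via the factorization $F=R_{\Theta_2}\circ\tilde\tau$. The paper likewise works with the invariant set $\bigcup_{j=0}^{n-1}F^{j}(J_U)$ (which coincides with your $W=\bigcup_k R^k(J_U)$ because $\tilde\tau$ acts trivially off $J_U$ and the $n$ translates are pairwise disjoint when $2\theta_2^{wall}<2\pi/n$), observes that the complement $I\subset J_R$ consists of $n$-periodic points of the pure rotation, and computes that the first return $F^{n}|_{J_U}$ shifts $J_{K_2}$ by $2\theta_2^{wall}\{\chi_2\}$ and $J_{K_2+1}$ by $-2\theta_2^{wall}(1-\{\chi_2\})$, i.e.\ is a rotation of $J_U$ with rotation number $\{\chi_2\}$; the dense/periodic dichotomy and the two periods $n$ and $nq$ follow exactly as you say.

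The genuine gap is in the realizability claim, which you flag but do not close, and the mechanism you sketch for the linear case does not work as stated: varying $\omega_2/\omega_1$ to steer $\chi_2$ also changes $\Theta_2^{LO}=2(\omega_2/\omega_1)\arccos(\omega_1q_1^{wall}/\sqrt{2e_1})$, so it destroys the constraint $\Theta_2=2\pi m/n$ unless you re-solve for $e_1^{*}$, after which the other two conditions must be re-verified -- this is precisely the circularity you were worried about. The paper escapes it with one decoupling you did not isolate: $\Theta_2$ depends on $e_1,h,q_1^{wall}$ but \emph{not} on $q_2^{wall}$. One first fixes $e_1^{*}$ with $\Theta_2(e_1^{*})=2\pi m/n$ (possible since $\Theta_2$ is continuous and non-constant on the step interval, cf.\ Table \ref{tab:chiutheta}), and then varies only $q_2^{wall}$, setting $V_2(q_2^{wall}(\delta))=h-e_1^{*}-\delta$. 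This leaves $\Theta_2$ untouched, drives $\theta_2^{wall}(\delta)$ monotonically to $0$ so that $2\theta_2^{wall}<2\pi/n$ for small $\delta$, and by Eq.\ (\ref{eq:xithetthet2}) makes $\chi_2(\delta)=\bigl(\Theta_2^{smooth}(e_1^{*},h)-2\pi m/n\bigr)/\bigl(2\theta_2^{wall}(\delta)\bigr)$ a continuous monotone non-constant function of the single parameter $\delta$, which therefore takes irrational values for almost every $\delta$ and rational values on a countable set. This one-parameter sweep realizes all three conditions simultaneously; no appeal to the pairwise independence of Lemma \ref{lem:independence} is needed.
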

\begin{proof}
 Let \(I=[-\pi ,\pi )\backslash \bigcup^{n-1} _{j=0}F^{j}(J_{U})\subset J_{R}\) where \(J_{U} =J_{K_{2}}\cup J_{K_{2}+1}\). Since here  \(\lambda_{J_U}=2\theta_{2}^{wall}<\frac{2\pi }{n}\), this is a non-empty set. It is invariant since the end points of \(J_{U}\)  belong to \(J_{R}\),  so the end points are \(n\)-periodic. Hence, all the i.c. in \(I\) are \(n\) periodic and thus \(F\) is non-ergodic on the circle.

\begin{figure}
\begin{centering}
\qquad \qquad \qquad \qquad \qquad \qquad \includegraphics[scale=0.3]{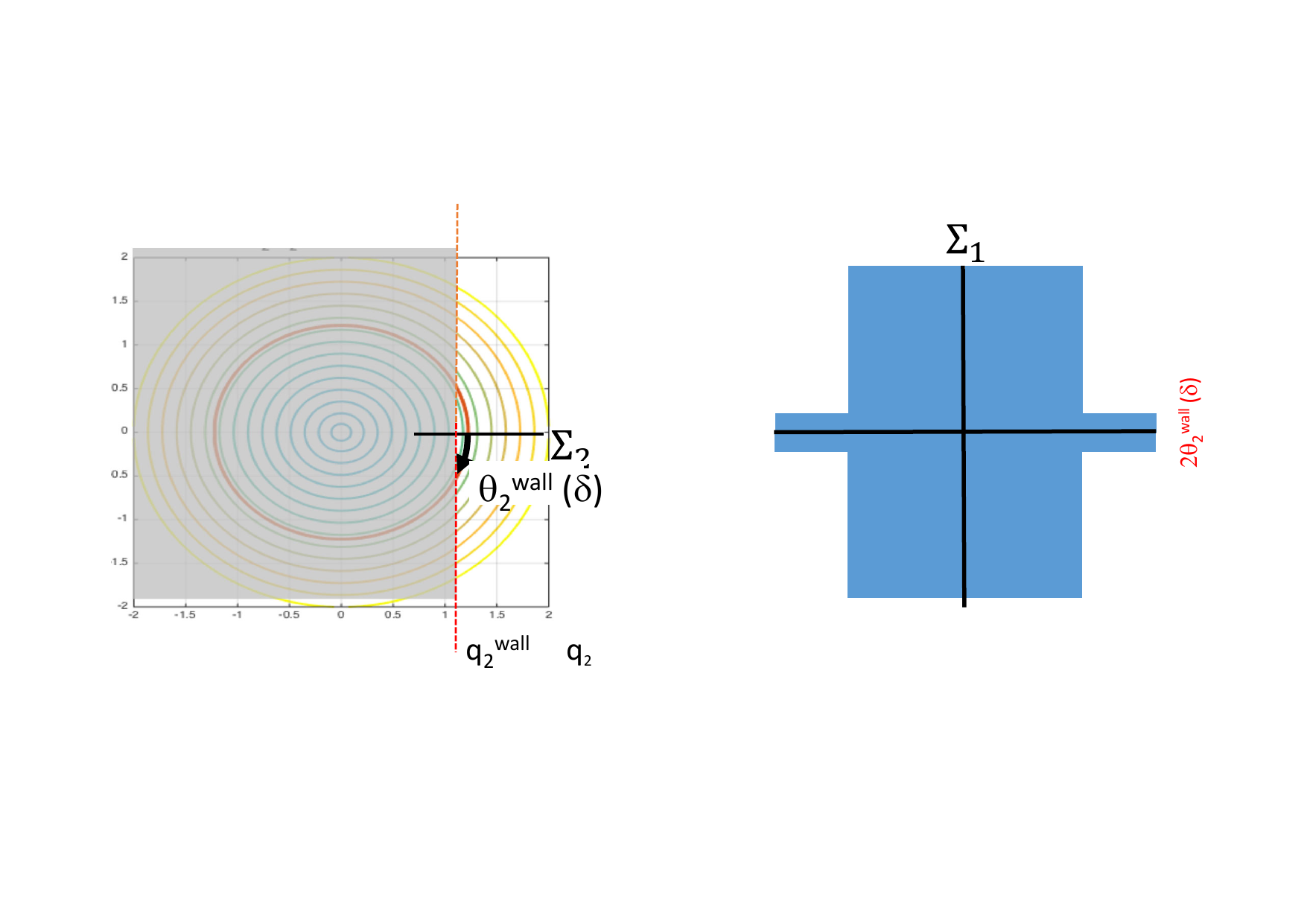}
\end{centering}
\protect\caption{The dynamics for small \(\theta_{2}^{wall}\).  }\label{fig:theta2walldelta}
\end{figure}

 The dynamics in the complement to \(I\), namely the invariant set \( \bigcup^{n-1} _{j=0}F^{j}(J_{U})\), depends on the numerical value of \(\{\chi_{2}\}\). Notice that for all i.c. in \(J_{K_{2}}\), \(F^n(\theta_2)=\theta_2+2\theta_{ 2}^{wall}\left\{\chi_{u}\right\}\in J_{U}\) whereas for all i.c. in \(J_{K_{2}+1}\), \(F^n(\theta_2)=\theta_2-2\theta_{ 2}^{wall}(1-\left\{\chi_{u}\right\} )\in J_{U}\), namely, \(F^n(\theta_2)\) is a 2-IEM on \(J_U\), hence it is periodic for    \(\{\chi_{u}\}=\frac{p}{q}\in\mathbb{Q} \) and is dense in \(J_{U}\) otherwise. In the periodic case, initial conditions in \(I \) are \(n\)-periodic whereas initial conditions in its complement are \(nq\) periodic.  Finally, since the functions \(\Theta_{2},\chi_{2},\theta_{ 2}^{wall}\) are continuous (in fact, smooth) non-constant functions of \(e_{1}\) in   \(\mathcal{I}^{c}(h)\)  and since \(\Theta_{2}=\Theta_{2}(e_{1},h,q_1^{wall})\)  we obtain that for every \(h,q_1^{wall}\) there is a countable set of \(e_{1}\) values in   \(\mathcal{I}^{c}(h)\), \(e_{1}^*=e_{1}(\frac{m}{n},h,q_1^{wall})\) for which  \(\Theta_{2}=\frac{2\pi m}{n} \). Notice that \(\Theta_{2}\) does not depend explicitly  on      \(q_{2}^{wall}\).  Fixing \(\frac{m}{n},h,q_1^{wall}\),  there is a      \(q_{2}^{wall}(\delta)\) value such that \(2\theta_{2}^{wall}(h-e_{1}^*,q_{2}^{wall})<\frac{2\pi }{n}\) for all     \(q_{2}^{wall}>q_{2}^{wall}(\delta)\);  indeed, choose     \(q_{2}^{wall}(\delta)>0\) such that \(V_{2}(q_2^{wall}(\delta))=h-e_{1}^{*}-\delta\), so, by (\ref{eq:thetaiwalldef}) and (\ref{eq:thetailimits}),  for small \(\delta\), \(\theta_{2}^{wall}(\delta)=\theta_{2}^{wall}(h-e_{1}^*,q_{2}^{wall}(\delta)) \) is monotone decreasing (in \(\delta\)) to \(0\) (see Fig. \ref{fig:theta2walldelta}). In particular,  there exists \(\delta^{*}(n)\) such that for all  \(\delta\in(c\delta^{*}(n),\delta^{*}(n))\),  for any \(0<c<1\), the impact angle satisfies \(2\theta_{2}^{wall}(c\delta^{*}(n))<2\theta_{2}^{wall}(\delta)<\frac{2\pi }{n}\), so it is  small yet bounded away from \(0,\)   as needed for the smooth dependence on \(e_1\) near \(e_{1}^{*}\). In particular, for this range of  \(q_{2}^{wall}(\delta)\) values,  motion on the level set \((e_{1}^*,h-e_1^*)\) is \(n\)-periodic for the set \(I\) as described above. Moreover, on this level set, from (\ref{eq:psi}),   \(\chi_{2}(\delta)=\frac{\Theta^{smooth}(e_1^*,h)-\frac{2\pi m}{n}}{\theta_{2}^{wall}(\delta)}>0\), hence, it is a continuous monotone increasing function of \(\delta,\) and thus,      \(\{\chi_{u}\}\notin\mathbb{Q} \)
 for almost all \(\delta\) values in the interval and there is  a countable set of \(\delta \) values for  which     \(\{\chi_{u}\}\in\mathbb{Q} \). Namely, we established that these conditions are always realizable by varying the parameter \(q_{2}^{wall}\).
\end{proof}
Notice that for sufficiently small \(c\) in the above proof the function \(\chi_{2}(\delta)\) becomes large, as in lemma \ref{lem:q2posxu0}, therefore, \(\{\chi_{2}(\delta)\}\)  vanishes at some isolated \(\delta\) values.
Finally, since \(\chi_{2}\) is continuous, its range for \(e_{1}\in\mathcal{I}^{c}(h)\) is at least as large as the interval \((\chi_{2}(h^{step}_1),\chi_{2}(h-h^{step}_2)) \). When one of these values is an integer, the behavior below and above this energy changes. Thus, Table \ref{tab:chiutheta} provides conditions for energy values at which bifurcations occur.
For linear oscillators, we can find the ranges explicitly, see section \ref{sec:linearosc}.

All the above properties were stated for the return map to \(\Sigma_{1}\), creating an artificial asymmetry between the horizontal and vertical directions. The same results apply to the return map to the \(\Sigma_2\) section by reversing the roles of 1 and 2  and horizontal and vertical in all definitions.

\section{The step dynamics for linear oscillators\label{sec:linearosc} }

For the quadratic potentials (Eq. (\ref{eq:LOpot})), the L-shaped billiard tables vertices are found explicitly and the direction of motion is fixed. We begin this section by proving Theorems
\ref{thm:mainflowlin} regarding the linear-oscillators-step dynamics and continue with additional observations regarding the singular level sets for this case.\
  \begin{proof} \textit{of Theorem
\ref{thm:mainflowlin}} : The transformation to action angle coordinates for linear oscillators, with the convention (\ref{eq:defsigmai}), becomes: \((q_{i}(t),p_{i}(t))=(\sqrt{\frac{2I_{i}}{\omega_{i}}}\cos \theta _{i}(t),-\sqrt{2I_{i}\omega_{i}} \sin \theta _{i}(t)), H_{i}=\omega_i I_{i}\), and \(I_{i}=\frac{1}{2}(\frac{p_i^2}{\omega_i}+\omega_iq_i^2) \).  Hence, for \(e_{i}>h_i^{step}=\half \omega^{2}_{i}(q_i^{wall})^2\):\begin{equation}\label{eq:thetaiwallLO}
\theta_{i}^{wall,LO}(e_{i};q_i^{wall})=\arccos \sqrt{\frac{\omega_{i}}{2I_{i}}}q_{i}^{wall}=\arccos \frac{\omega_{i}q_{i}^{wall}}{\sqrt{2e_{i}}}\in\begin{cases}(\frac{\pi}{2},\pi)  & q_{i}^{wall}<0 \\
(0,\frac{\pi}{2}) & q_{i}^{wall}>0. \\
\end{cases}
\end{equation}  Since  the frequencies of linear oscillators are independent of the energy, the  direction of motion  in the iso-energetic billiard family,  \(\mathbf{\mathcal{B}}(h)\), is fixed to \((\omega_1,\omega_2)\) for all the level sets. Hence, for a given energy surface \(h=e_{1}+e_2>h^{step}\), the linear oscillator step dynamics on each level set \((e_{1},e_2=h-e_1)\in\mathcal{R}^{c}(h)\) are conjugated  to the directed billiard motion, in direction \((\omega_{1},\omega_2)\), in the L-shaped billiards
\(L(\pi,\pi,\arccos \frac{\omega_{1}q_{1}^{wall}}{\sqrt{2e_{1}}},\arccos \frac{\omega_{2}q_{2}^{wall}}{\sqrt{2(h-e_{1})}})\). Moreover, since \begin{equation}
\frac{d\theta_{1}^{wall,LO}(e_{1};q_1^{wall})}{de_{1}}=\frac{\omega_{1}q_{1}^{wall}}{2e_{1}}\frac{1}{\sqrt{2e_{1}-\left({\omega_{1}q_{1}^{wall}}\right)^2}}
\end{equation}

and

\begin{equation}
\frac{d\theta_{2}^{wall,LO}(h-e_{1};q_1^{wall})}{de_{1}}=-\frac{\omega_{2}q_{2}^{wall}}{2(h-e_{1})}\frac{1}{\sqrt{2(h-e_{1})-
\left({\omega_{2}q_{2}^{wall}}\right)^2}},
\end{equation}the widths of the arms are monotone in \(e_{1}\) and are of opposite monotonicity iff \(q_{1}^{wall}q_{2}^{wall}>0\).

\end{proof}

As shown in section \ref{sec:moreproperties}, the monotonicity, bounds and limits of the functions \(\Theta_{2},\chi_{2},\theta_{ 2}^{wall}\) determine the variety of behaviors of the dynamics on iso-energy surfaces. For linear oscillators,  \(\Theta_{2}^{LO},\theta_{ 2}^{wall,LO}\)  are  monotone in the step region whereas:    \begin{lem}
 For all \(h>h^{step}\), the function   \(  \chi_{2}^{LO}(e_{1},h)\) is monotone iff \(q_{1}^{wall}q_{2}^{wall}<0\).       \end{lem}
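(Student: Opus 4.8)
The plan is to work directly with the explicit formula (\ref{eq:thetxiLO}) for \(\chi_2^{LO}\). Writing \(a_1 = \theta_1^{wall,LO}(e_1;q_1^{wall})\) and \(a_2 = \theta_2^{wall,LO}(h-e_1;q_2^{wall})\), so that \(\chi_2^{LO} = \frac{\omega_2}{\omega_1}\frac{\pi - a_1}{a_2}\), I would differentiate in \(e_1\) by the quotient rule. Since \(\frac{\omega_2}{\omega_1}>0\) and \(a_2^2>0\), the sign of \(\frac{d\chi_2^{LO}}{de_1}\) equals the sign of the numerator
\[
N(e_1) = -a_1'(e_1)\,a_2(e_1) - \bigl(\pi - a_1(e_1)\bigr)\,a_2'(e_1).
\]
The key structural input, already recorded in the two displayed derivative formulas computed in the proof of Theorem \ref{thm:mainflowlin}, is that \(\mathrm{sign}(a_1') = \mathrm{sign}(q_1^{wall})\) and \(\mathrm{sign}(a_2') = -\mathrm{sign}(q_2^{wall})\) throughout the open interval \(e_1 \in \mathcal{I}^{c}(h) = (h_1^{step}, h - h_2^{step})\), while \(a_2 \in (0,\pi)\) and \(\pi - a_1 \in (0,\pi)\) are strictly positive there.

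For the easy implication (\(q_1^{wall}q_2^{wall}<0\) forces monotonicity) I would simply read off the signs. If \(q_1^{wall}>0,\,q_2^{wall}<0\), then \(a_1'>0\) and \(a_2'>0\), so both summands of \(N\) are negative and \(N<0\); if \(q_1^{wall}<0,\,q_2^{wall}>0\), then \(a_1'<0\) and \(a_2'<0\), so both summands are positive and \(N>0\). In either sub-case \(N\) is sign-definite on the whole of \(\mathcal{I}^{c}(h)\), hence \(\chi_2^{LO}\) is strictly monotone (decreasing or increasing, respectively).

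The harder implication is the converse: when \(q_1^{wall}q_2^{wall}>0\) the two summands of \(N\) have opposite signs, so sign-definiteness is lost and I must exhibit an actual sign change. The plan is a boundary-asymptotics argument at the two endpoints of \(\mathcal{I}^{c}(h)\). As \(e_1 \searrow h_1^{step}\) the radical \(\sqrt{2e_1 - (\omega_1 q_1^{wall})^2}\) appearing in \(a_1'\) tends to \(0\), so \(|a_1'|\to\infty\), while \(a_2'\) stays finite there precisely because \(2(h - h_1^{step}) - (\omega_2 q_2^{wall})^2 = 2(h - h^{step}) > 0\) by the standing hypothesis \(h>h^{step}\); hence near the left endpoint the term \(-a_1' a_2\) dominates and forces \(\mathrm{sign}(N) = -\mathrm{sign}(q_1^{wall})\). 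Symmetrically, as \(e_1 \nearrow h - h_2^{step}\) the radical in \(a_2'\) vanishes, \(|a_2'|\to\infty\), \(a_1'\) remains finite (again by \(h>h^{step}\), since \(2(h - h_2^{step}) - (\omega_1 q_1^{wall})^2 = 2(h - h^{step})>0\)), the term \(-(\pi-a_1)a_2'\) dominates, and \(\mathrm{sign}(N) = +\mathrm{sign}(q_2^{wall})\).

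When \(q_1^{wall}q_2^{wall}>0\) these two boundary signs, \(-\mathrm{sign}(q_1^{wall})\) at the left end and \(+\mathrm{sign}(q_2^{wall})\) at the right end, are opposite; since \(N\) is continuous on the open interval \(\mathcal{I}^{c}(h)\), the intermediate value theorem yields an interior zero at which \(N\) changes sign, and \(\chi_2^{LO}\) therefore fails to be monotone. The main obstacle is making the endpoint blow-up rigorous: one must verify that exactly one radical degenerates at each endpoint — which is exactly where \(h>h^{step}\) enters, guaranteeing the other radical (hence the competing derivative) stays bounded — and that the surviving finite factors \(a_2\) and \(\pi-a_1\) are bounded away from \(0\) at that endpoint, so that the divergent term genuinely controls the sign of \(N\). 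Once these estimates are in place, the equivalence is established.
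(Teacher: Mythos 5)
Your proposal is correct and follows essentially the same route as the paper: differentiate the explicit formula, observe that the two summands of the numerator have a common sign exactly when \(q_{1}^{wall}q_{2}^{wall}<0\), and in the opposite case use the divergence of \(a_1'\) at the left edge and of \(a_2'\) at the right edge of \(\mathcal{I}^{c}(h)\) to force a sign change of the derivative. Your version is in fact slightly more careful than the paper's, since you explicitly verify that the competing term stays bounded and that the surviving factors \(a_2\) and \(\pi-a_1\) are bounded away from zero at the relevant endpoint.
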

\begin{proof}Observe that for linear oscillators  \(q_{i}^{wall}\tilde T_{i}'^{LO}(e_{i};q_{i}^{wall})>0\) (see Eqs. (\ref{eq:thetaiwalldef},\ref{eq:thetailo}) and recall that \( T_{i}'^{LO}=0\), hence the result follows from the definition (\ref{eq:psi}) of \(\chi_{2}(e_{1},h)\), or, from direct differentiation of   \(\chi_{2}^{LO}(e_{1},h)=\frac{\omega_{2}}{\omega_{1}}\frac{(\pi-\arccos \frac{\omega _{1}q_{1}^{wall}}{\sqrt{2e_{1}}})}{  \arccos \frac{\omega _{2}q_{2}^{wall}}{\sqrt{2(h-e_{1})}}}=
\frac{\omega_{2}}{\omega_{1}}\frac{\pi-\theta_{1}^{wall,LO}(e_{1};q_1^{wall})}{\theta_{2}^{wall,LO}(h-e_{1};q_2^{wall})}\) (see (\ref{eq:thetxiLO})):
\begin{equation}
\chi^{LO'}_{2}(e_{1})=\frac{\omega_{2}\left(-\frac{\omega_{1}q_{1}^{wall}\arccos \frac{\omega _{2}q_{2}^{wall}}{\sqrt{2(h-e_{1})}}}{2e_1\sqrt{2e_{1}-(\omega _{1}q_{1}^{wall})^{2}}}+\frac{\omega _{2}q_{2}^{wall}(\pi-\arccos \frac{\omega _{1}q_{1}^{wall}}{\sqrt{2e_{1}}})}{2(h-e_1)\sqrt{2(h-e_{1})-(\omega _{2}q_{2}^{wall})^{2}}}\right)}{\omega_{1}(\arccos \frac{\omega _{2}q_{2}^{wall}}{\sqrt{2(h-e_{1})}})^2}.
\end{equation}
The denominator is always positive (it approaches \(0\) when    \(q_{2}^{wall}>0\) and  \(e_{1}\nearrow h-\frac{1}{2}(\omega _{2}q_{2}^{wall})^{2}\) ) so the sign is determined by the numerator. If     \(q_{1}^{wall}q_{2}^{wall}<0\) both terms in the numerator have the same sign so  \(\chi_{2}\) is monotone. If    \(q_{1}^{wall}q_{2}^{wall}>0\), the first term in the numerator diverges to \(-\text{sign}(q_{1}^{wall})\infty\) as \(e_{1}\searrow\frac{1}{2}(\omega _{1}q_{1}^{wall})^{2}\),  the second term diverges to \(\text{sign}(q_{2}^{wall})\infty\)    as as \(e_{1}\nearrow h-\frac{1}{2}(\omega _{2}q_{2}^{wall})^{2}\),  hence  \(\chi_{2}'(e_{1})\) changes sign and   \(\chi_{2}\) is non-monotone.
\end{proof}

\begin{cor}
If \(q_{2}^{wall}>0\), for all \(h>h^{step}\), the step region has countable infinite level sets at which \(\left\{\chi^{LO}_{2}\right\}=0\), namely at which the return map to \(\Sigma_{1}\) reduces to a two intervals rotation on the circle. For \(q_{2}^{wall}<0\), for sufficiently large \(h \), the number, \(N^{2}_{osc}(h)\), of such level sets when   \(q_{1}^{wall}<0\) is at least \(\left\lfloor\frac{1}{2}\frac{\omega_{2}}{\omega_{1}}\right\rfloor\)whereas if  \(q_{1}^{wall}>0\) there are \(\left\lfloor\frac{3}{2}\frac{\omega_{2}}{\omega_{1}}\right\rfloor\) such level sets. The same results hold for the return map to \(\Sigma_{2}\) when replacing the roles of \(1\leftrightarrow2\) in the above statements.  \end{cor}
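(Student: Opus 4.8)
The plan is to translate the statement into a counting problem for the integer level sets of the explicit function \(\chi_2^{LO}\). By Corollary \ref{cor:xuzero}, \(\{\chi_2^{LO}\}=0\) (equivalently \(\chi_2^{LO}\in\mathbb{Z}\)) is precisely the condition under which the return map to \(\Sigma_1\) collapses to a two-interval rotation, so \(N^2_{osc}(h)\) equals the number of \(e_1\in\mathcal{I}^c(h)\) at which \(\chi_2^{LO}(e_1,h)\), given by (\ref{eq:thetxiLO}), is an integer. The case \(q_2^{wall}>0\) is then immediate from Lemma \ref{lem:q2posxu0}, whose proof applies verbatim to linear oscillators: \(\tilde T_2(h-e_1;q_2^{wall})\to0\) as \(e_1\to h-h_2^{step}\) while \(T_1-\tilde T_1\) stays bounded below, so \(\chi_2^{LO}\) diverges to \(+\infty\) at the right endpoint of \(\mathcal{I}^c(h)\); being continuous on the open interval it crosses every sufficiently large integer, producing countably many level sets with \(\{\chi_2^{LO}\}=0\).

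For \(q_2^{wall}<0\) I would first pin down the two boundary limits of \(\chi_2^{LO}\) as \(h\to\infty\), writing \(r=\frac{\omega_2}{\omega_1}\). Using the explicit \(\theta_i^{wall,LO}\) of (\ref{eq:thetailo}) together with the endpoint values \(\theta_i^{wall}(h_i^{step})\in\{0,\pi\}\) from (\ref{eq:thetailimits}) and the symmetric-potential limit \(\theta_i^{wall,\infty}=\tfrac{\pi}{2}\) from (\ref{eq:thetailimitinfinity}): at \(e_1=h_1^{step}\) one has \(\theta_1^{wall,LO}=\pi\) (if \(q_1^{wall}<0\)) or \(0\) (if \(q_1^{wall}>0\)) while \(\theta_2^{wall,LO}(h-h_1^{step})\to\tfrac{\pi}{2}\); at \(e_2=h_2^{step}\) one has \(\theta_2^{wall,LO}=\pi\) while \(\theta_1^{wall,LO}(h-h_2^{step})\to\tfrac{\pi}{2}\). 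Substituting into (\ref{eq:thetxiLO}) gives the limiting endpoints of the range of \(\chi_2^{LO}\): namely \((0,\,r/2)\) when \(q_1^{wall}<0\), and \((2r,\,r/2)\) when \(q_1^{wall}>0\).

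It then remains to count integer crossings over this range. When \(q_1^{wall}>0\) we have \(q_1^{wall}q_2^{wall}<0\), so by the preceding lemma \(\chi_2^{LO}\) is monotone; for \(h\) large its range is the open interval stabilising to \((r/2,2r)\), of length \(\tfrac32\frac{\omega_2}{\omega_1}\), and the number of integers it meets is \(\big\lfloor\tfrac32\frac{\omega_2}{\omega_1}\big\rfloor\). When \(q_1^{wall}<0\) the function is non-monotone, so I would not track its exact range but only invoke the intermediate value theorem on the continuous path from \(0\) to \(r/2\): every integer in \((0,r/2)\) is attained at least once, giving the lower bound \(\big\lfloor\tfrac12\frac{\omega_2}{\omega_1}\big\rfloor\), while the turning of \(\chi_2^{LO}\) may add further crossings, which is why the statement reads ``at least'' in this sub-case. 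The \(\Sigma_2\) assertions follow by the \(1\leftrightarrow2\) symmetry recorded after Theorem \ref{thm:mainmap}.

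The hard part will be the integer bookkeeping in the \(q_2^{wall}<0\) regime rather than any analytic difficulty: the precise count depends on whether the limiting endpoints \(r/2\) and \(2r\) are themselves integers and on how the finite-\(h\) endpoints approach them, so one must choose \(h\) large enough that the range has stabilised and treat the non-generic resonant ratios \(\frac{\omega_2}{\omega_1}\) separately. The non-monotone sub-case \(q_1^{wall}<0\) intrinsically yields only a lower bound, and sharpening the ``at least'' would require controlling the interior critical point of \(\chi_2^{LO}\) produced by the preceding lemma.
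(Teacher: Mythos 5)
Your proposal is correct and follows essentially the same route as the paper: reduce to counting integer values in the range of $\chi_2^{LO}$ on $\mathcal{I}^c(h)$, use the divergence of $\chi_2^{LO}$ at the right endpoint when $q_2^{wall}>0$ (as in Lemma \ref{lem:q2posxu0}), and for $q_2^{wall}<0$ compute the large-$h$ endpoint limits $0$, $r/2$, $2r$ via $\theta_i^{wall,\infty}=\pi/2$ and apply the monotonicity dichotomy of the preceding lemma, exactly as the paper does through Tables \ref{tab:chiuthetaLO} and \ref{tab:xulolimits}. The integer-bookkeeping caveat you flag (open-interval endpoint effects in equating the count with $\lfloor 3\omega_2/2\omega_1\rfloor$) is real but is glossed over to the same degree in the paper's own argument.
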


\begin{proof} First, it follows from (\ref{eq:thetxiLO})  that in the step region \(\chi_{2}^{LO}(e_{1},h)\) is a smooth non-oscillatory function which diverges only at the step region upper boundary (and this occurs iff \(q_{2}^{wall}>0\)). Hence, for any fixed \(h \), there is at most countable infinite level sets \(N^{2}_{osc}(h) \) at which   \(\left\{\chi_{2}^{LO}(e_{1})\right\}\) may vanish. The  edge  values - the values of \(\chi_{2}^{LO},\Theta_{2}^{LO}\) at the end points of the step-region (namely calculating (\ref{eq:thetailo}),(\ref{eq:thetxiLO}) at \(e_{1}=h_1^{step}\) and at \(e_1=h-h_2^{step}\)) and their monotonicity property  are listed in Table
\ref{tab:chiuthetaLO}, where  \begin{equation}
\theta_1^*(h)=\theta_{1}^{wall}(h-h_2^{step};q_{1}^{wall})=\arccos \frac{\omega _{1}q_{1}^{wall}}{\sqrt{2h-(\omega _{2}q_{2}^{wall})^2}}
\end{equation}

\begin{equation}\theta_2^*(h)=\theta_{2}^{wall}(h-h_1^{step};q_{2}^{wall})=\arccos \frac{\omega _{2}q_{2}^{wall}}{\sqrt{2h-(\omega _{1}q_{1}^{wall})^2}}.\end{equation}

  \begin{table}[ht]
  \begin{center}
  \begin{tabular}{|l|l|l|}\hline
Corner position\ & \(\chi_{2}^{LO}(h_{1}^{step})\rightarrow\chi^{LO}_{2}(h-h_{2}^{step})\) & \(\Theta^{LO}_{2}(h_{1}^{step})\rightarrow\Theta^{LO}_{2}(h-h_{2}^{step})\)    \\\hline\hline
\(q_{1,2}^{wall}<0\) & \(\qquad0\nearrow\searrow\frac{\omega_{2}\left(1-\frac{1}{\pi}\theta_1^*(h)\right)}{\omega_{1}}
 \) &\(2\pi\frac{\omega_{2}}{\omega_{1}}\searrow2\frac{\omega_{2}}{\omega_{1}}\theta_1^*(h)\)  \\\hline
 \(q_{1}^{wall}<0\), \(q_{2}^{wall}>0\) & \(\qquad0\nearrow\quad\infty\)&
 \(2\pi\frac{\omega_{2}}{\omega_{1}}\searrow 2\frac{\omega_{2}}{\omega_{1}}\theta_1^*(h)\) \\\hline
 \(q_{1}^{wall}>0\), \(q_{2}^{wall}<0\) & \( \frac{\omega_{2}}{\omega_{1}}\frac{\pi}{  \theta_2^*(h)}
 \searrow\frac{\omega_{2}\left(1-\frac{1}{\pi}\theta_1^*(h)\right)}{\omega_{1}}
 \)
 &\(\qquad0\nearrow 2\frac{\omega_{2}}{\omega_{1}}\theta_1^*(h)\)  \\\hline
\(q_{1,2}^{wall}>0\) & \(\frac{\omega_{2}}{\omega_{1}}\frac{\pi}{  \theta_2^*(h)}
 \searrow\nearrow\infty\)&\(\qquad0\nearrow2\frac{\omega_{2}}{\omega_{1}}\theta_1^*(h)\) \\\hline
\end{tabular}
\end{center}
\caption{The  values of \(\chi^{LO}_{2},\Theta^{LO}_{2}\) at the edges of \(\mathcal{R}^{c}(h)\) and their monotonicity properties. }\label{tab:chiuthetaLO}
\end{table}


For any energy  \(h\), these values supply the range of  \(\chi_{2}^{LO}\) in the monotone cases  (second and third rows in the tables) and a lower bound on its range in the non-monotone cases (first and last rows).  The number of iso-energy level sets at which \(\{\chi_{2}^{LO}=0\} \) (at which the directional motion in the horizontal arm of the SW surface is diagonal) is determined by the number of integer values contained in the range of    \(\chi_{2}^{LO}\).  The second and fourth rows of Table \ref{tab:chiuthetaLO} show that the range is infinite when \(q_{2}^{wall}>0\), proving the first statement of the corollary.  Table \ref{tab:xulolimits} shows the asymptotic  edge values at large energies, using the observation that \(\theta_{1,2}^*(h)\rightarrow\frac{\pi }{2}\).
The rest of the corollary follows from this table -  for    \(q_{2}^{wall}<0\), the first row of Tables   \ref{tab:xulolimits} corresponds to the non-monotone case whereas the third row corresponds to the monotone case. Since \(N^{2}_{osc}(h)\) are integers, for sufficiently  large \(h \) the limiting values and \(N^{2}_{osc}(h)\) are identical.
Finally,
 by symmetry,  replacing the roles of \(1\leftrightarrow2\), provides  the estimates for     \(N^{1}_{osc}(h)\),\ the number of oscillations in the vertical arm before returning to the section \(\Sigma_{2}\).

\begin{table} [ht]
  \begin{center}
  \begin{tabular}{|c|c|c|c|c|}\hline
Corner position\ & \(\chi^{LO}_{2}\) & \(\Theta_{2}^{LO}\ \)& \(N_{osc}^2\)& \(N_{osc}^1\)   \\\hline\hline
\(q_{1,2}^{wall}<0\) & \(0\nearrow\searrow \frac{\omega_{2}}{2\omega_{1}}\) &  \(2\pi\frac{\omega_{2}}{\omega_{1}}\searrow\pi\frac{\omega_{2}}{\omega_{1}}\)
 & \(\geqslant\left\lfloor\frac{1}{2}\frac{\omega_{2}}{\omega_{1}}\right\rfloor\) & \(\geqslant\left\lfloor\frac{1}{2}\frac{\omega_{1}}{\omega_{2}}\right\rfloor\) \\\hline
 \(q_{1}^{wall}<0\), \(q_{2}^{wall}>0\) & \(0\nearrow\infty\) &
 \(2\pi\frac{\omega_{2}}{\omega_{1}}\searrow\pi\frac{\omega_{2}}{\omega_{1}}\)
 & \(\infty \)
&\(\left\lfloor\frac{3}{2}\frac{\omega_{1}}{\omega_{2}}\right\rfloor \) \\\hline
 \(q_{1}^{wall}>0\), \(q_{2}^{wall}<0\) &
 \(\frac{2\omega_{2}}{\omega_{1}}\searrow\frac{\omega_{2}}{2\omega_{1}}\)
  &  \(0\nearrow\pi\frac{\omega_{2}}{\omega_{1}}\)
&\(\left\lfloor\frac{3}{2}\frac{\omega_{2}}{\omega_{1}}\right\rfloor \)  & \(\infty \) \\\hline
\(q_{1,2}^{wall}>0\) & \(\frac{2\omega_{2}}{\omega_{1}}\searrow\nearrow\infty\) &\(0\nearrow\pi\frac{\omega_{2}}{\omega_{1}}\)&\(\infty\)  & \(\infty \)\\\hline
\end{tabular}
\end{center}
\caption{The behavior of \(\chi_{2}^{LO},\Theta_{2}^{LO}\)  on   \(\mathcal{R}^{c}(h)\) at large \(h \) and the number of oscillations, \(N_{osc}^i\)(h), of \(\left\{\chi^{LO}_{i}\right\}\) in the family of iso-energy return maps to \(\Sigma_{i}\) for sufficiently large \(h.\) \label{tab:xulolimits}}
\end{table}
\end{proof}

Table \ref{tab:xulolimitshstep}  displays the edge values  at energies near \(h^{step}\) (i.e. for \(h= h^{step}+\eta\), and small \(\eta\)). Notice that for such \(h\) values \(\theta_{i}^*(h)=\sqrt{\eta/h_{i}^{step}}\) if \(q_{i}^{wall}>0\) and \(\theta_{i}^*(h)=\pi-\sqrt{\eta/h_{i}^{step}}\) if \(q_{i}^{wall}<0\). We see that when \(q_{2}^{wall}>0\), infinite number of oscillations occur  for arbitrary small \(\eta\), whereas in the other cases, the number of oscillations scales with   \(\sqrt{\eta}\).

 \begin{table} [ht]
  \begin{center}
  \begin{tabular}{|c|c|c|c|}\hline
Corner position\ & \(\chi^{LO}_{2}\) & \(\Theta_{2}^{LO} \)& \(N^{2}_{osc}(h^{step}+\eta)\)   \\\hline\hline
\(q_{1,2}^{wall}<0\) & \(0\nearrow\searrow a_{1} \frac{\omega_{2}}{\omega_{1}}\) &  \(2\pi\frac{\omega_{2}}{\omega_{1}}\searrow2\pi\frac{\omega_{2}}{\omega_{1}}(1-a_{1})\)
 &\(\gtrsim \left\lfloor a_1\frac{\omega_{2}}{\omega_{1}}\right\rfloor \) \\\hline
 \(q_{1}^{wall}<0\), \(q_{2}^{wall}>0\) & \(0\nearrow\infty\) & \(2\pi\frac{\omega_{2}}{\omega_{1}}\searrow2\pi\frac{\omega_{2}}{\omega_{1}}(1-a_{1})\)& \(\infty\) \\\hline
 \(q_{1}^{wall}>0\), \(q_{2}^{wall}<0\) &
 \(\frac{\omega_{2}}{\omega_{1}}(1+a_{2})\searrow\frac{\omega_{2}}{\omega_{1}}(1-a_{1}) \)
  &  \(0\nearrow\pi a_{1}\)
&\(\left\lfloor\frac{\omega_{2}}{\omega_{1}}a_1 (1+\sqrt{\frac{h_{1}^{step}}{h_{2}^{step}}})\right\rfloor\)  \\\hline
\(q_{1,2}^{wall}>0\) & \(\frac{1}{a_{2}}  \frac{\omega_{2}}{\omega_{1}}
 \searrow\nearrow\infty\) &\(0\nearrow\pi a_{1}\)&\(\infty\) \\\hline
\end{tabular}
\end{center}
\caption{The  values of \(\chi_{2}^{LO},\Theta_{2}^{LO}\) at the edges of   \(\mathcal{R}^{c}(h=h^{step}+\eta)\) for  small \(\eta\), namely \(\chi^{LO}_{2}(h_{1}^{step}),\chi^{LO}_{2}(\eta+ h^{step}-h_{2}^{step})\) and  \(N^{2}_{osc}(h)   \). The values of  \(N^{1}_{osc}(h)   \)  in the first and second rows are found by switching \(1\leftrightarrow2\) (as in Table \ref{tab:xulolimits}). For shorthand notation  we denote here \(a_{i}=\frac{1}{\pi }\sqrt{\frac{\eta}{h_{i}^{step}}},i=1,2.\)}\label{tab:xulolimitshstep}
\end{table}

\section{Summary and discussion}

An integrable mechanical Hamiltonian system with a step barrier in the configuration space which is aligned with the continuous symmetries of the integrable Hamiltonian produces dynamics that are not Lioville integrable, yet are analyzable.  An experimental setup which realizes such a theoretical model has been suggested (Fig \ref{fig:models}). In such models, the motion on energy surfaces is foliated by level sets, yet, the motion on a range of iso-energy  level sets is non-integrable and is conjugated to the motion on a family of genus 2 flat surfaces or, equivalently, to an L-shaped billiard (Theorem \ref{thm:mainflow}). The return map to a Poincar\'e section for this range of level sets is a 5 interval exchange map, and the lengths of the intervals change non-trivially along the iso-energy family of level sets (Theorem \ref{thm:mainmap}).  For the case of Linear oscillators the L-shaped billiard dimensions and thus the intervals lengths are found explicitly (Theorem \ref{thm:mainflowlin}) whereas for general non-linear oscillators they are given up to quadratures. While our main example included a single step,   the same strategy may be applied to any barrier geometry which combines horizontal and vertical barriers. The flow of the HIS (\ref{eq:modelsham}) with such barriers is conjugated, for any given level set, to a directional motion in the angles' space on  nibbled rectangles with rectangular holes as analyzed in \cite{frkaczek2019recurrence}. An important conclusion is that above certain energy the energy surfaces of   (\ref{eq:modelsham}) are foliated by several families of level set surfaces; within any such family the geometry varies smoothly, and different families have distinct topology. Namely, on the same energy surface there are families of  level-set surfaces with different  number of connected components and different numbers of holes (see corollary \ref{cor:nontrivialtop} and Fig. \ref{fig:torus1}).

The implications of our findings are intriguing; First, the statistics of a typical observable of such mechanical systems (i.e. an observable which does not depend only on the energy distribution among the d.o.f.) are now related to  the delicate theories derived for studying IEM and Teichmuler flows on moduli spaces.
Second, by considering soft steep potentials instead of impacts, the topology of the energy surfaces remains as complex as the one constructed here (then the motion is not expected to be foliated to level sets).  Higher dimensional extensions, other symmetries, potentials with  local maxima (so that  the smooth system has singular level sets of the Liouville foliation), and the influence of small perturbations and soft potentials are exciting directions to be further explored (see related results in  \cite{pnueli2018near,RK2014smooth}).

\section*{Acknowledgments}
 Supported by ISF 1208/16  and partially by the National Science Foundation under Grant No. 1440140, while one of the authors (VRK) was in residence at the Mathematical Sciences Research Institute in Berkeley, California, during the Fall semester of 2018.
LB, SE, BF and SGC visit and work was supported by the Bessie Laurence International Summer Science Institute at the Weizmann institute.
VRK thanks  K. Fr{\k{a}}czek  and B. Weiss for their valuable explanations on the dynamics on families of flat surfaces.

\section*{List of abbreviations}

\paragraph*{d.o.f} degrees-of-freedom

\paragraph*{HIS} Hamiltonian impact systems

\paragraph*{LIHIS} Liouville-integrable HIS
\paragraph*{QIHIS} Quasi-integrable HIS

\paragraph*{IEM} Interval Exchange Map

\end{document}